\theoremstyle{plain}
\newtheorem{thm}{Theorem}[section]
\newtheorem{cor}[thm]{Corollary}
\newtheorem{lem}[thm]{Lemma}
\newtheorem{prop}[thm]{Proposition}
\theoremstyle{definition}
\newtheorem{defn}[thm]{Definition}
\theoremstyle{remark}
\newtheorem{rem}[thm]{Remark}
\theoremstyle{remark}
\theoremstyle{remark}
\theoremstyle{remark}
\theoremstyle{definition}
\newtheorem{example}[thm]{Example}
\theoremstyle{definition}
\theoremstyle{plain}
\theoremstyle{definition}
\theoremstyle{remark}
\theoremstyle{remark}
\theoremstyle{definition}
\theoremstyle{remark}
\newtheorem*{acknowledgement*}{Acknowledgement}
\newcommand{\R}{{\mathbb R}}
\newcommand{\C}{{\mathbb C}}
\newcommand{\N}{{\mathbb N}}
\newcommand{\Z}{{\mathbb Z}}
\newcommand{\e}{\varepsilon}
\newcommand{\hm}{{\mathcal M}}
\newcommand{\hn}{{\mathcal N}}
\newcommand{\hr}{{\mathcal R}}
\newcommand{\id}{\mathrm{id}}
\DeclareMathOperator{\tr}{tr}
\newcommand{\ip}[1]{\langle#1\rangle}
\DeclareMathOperator{\Ad}{Ad}
\DeclareMathOperator{\Aut}{Aut}
\newcommand{\U}{\mathcal{U}}
\newcommand{\T}{\mathbb{T}}
\DeclareMathOperator{\diag}{diag}
\newcommand{\rdim}{\mathrm{dim}_{\mathrm{Rok}}}
\def\freeprod{\font\bigsymbolsfont=cmsy10 scaled \magstep3
 \setbox0=\hbox{\bigsymbolsfont\char'003 }\mathord{\lower1pt\box0}}\relax\ignorespaces
\newcommand{\Hawaii}{Hawai\kern.05em`\kern.05em\relax i}
\begin{document}

\title{A Rokhlin type theorem for simple C$^*$-algebras of finite nuclear dimension}

\author{Hung-Chang Liao}

\address{Department of Mathematics, Penn State University, State
	College, PA 16802, USA}

\email{hxl255@psu.edu} 

\begin{abstract}
We study $\Z$-actions on unital simple separable stably finite $C^*$-algebras of finite nuclear dimension. Assuming that the extreme boundary of the trace space is compact and finite dimensional, and that the induced action on the trace space is trivial, we show that strongly outer $\Z$-actions have finite Rokhlin dimension in the sense of Hirshberg, Winter and Zacharias. 
\end{abstract}

\maketitle


\section{Introduction} 

An important classical result in ergodic theory is Rokhlin's Lemma, which asserts that an ergodic measure preserving transformation on a non-atomic measure space can be approximated by cyclic shift in a suitable sense. In the 1970s Alain Connes made several fundamental breakthroughs in the study of automorphisms of von Neumann algebras (cf. \cite{Con01}). A key ingredient which led to the complete classification of $\mathbb{Z}$-actions on the hyperfinite II$_1$-factor up to outer conjugacy was his discovery that suitably outer automorphisms always have the so-called Rokhlin property, a noncommutative analogue of Rokhlin's Lemma. Up to approximation, the Rokhlin property requires the existence of a partition of unity (called a Rokhlin tower) consisting of mutually orthogonal projections such that the projections are cyclicly permuted by the automorphism. The notion of Rokhlin property was later imported to the realm of $C^*$-algebras. In the 1990s Kishimoto proved similar $C^*$-results for certain AF and A$\T$ algebras  (cf. \cite{Ki01,Ki03}). Later Nakamura proved an analogue for $\Z^2$-actions on UHF algebras (cf. \cite{Na01}), and $\Z$-actions on Kirchberg algebras \cite{Na02}.  However, the original definition of Rokhlin property turns out to be quite restrictive for $C^*$-algebras, especially the stably finite ones. There have been many attempts of generalizing the definition (e.g. \cite{OsPh01, Sa01}) and obtaining the corresponding Rokhlin type theorems (e.g. \cite{OsPh02, MaSa02}). Recently, Hirshberg, Winter and Zacharias introduced the notion of Rokhlin \textit{dimension} (cf. \cite{HWZ01}), a ``colored'' version of Rokhlin property motivated by the idea of covering dimension. It is proved in \cite{HWZ01} that this generalization preserves important regularity properties at the level of crossed products, and applies to a much larger class of $C^*$-algebra.  With the notion of Rohklin dimension in hand, one can hope to prove very general analogues of Connes' result. Indeed, very recently Matui and Sato made an important breakthrough, showing that for simple stably finite $C^*$-algebras with finite nuclear dimension and finitely many extremal traces, a $\Z$-action has finite Rokhlin dimension if and only if it is strongly outer (cf. \cite[Introduction]{HWZ01}). Inspired by their result and recent development of extending the Toms-Winter conjecture to a $C^*$-algebra whose trace space is a Bauer simplex, we provide the following result:

\begin{thm}
\label{thm:main}
Let $A$ be a unital separable simple stably finite nuclear $C^*$-algebra with property (SI). Suppose the extreme boundary $\partial T(A)$ of the trace space $T(A)$ is closed and has finite topological covering dimension. Let $\alpha: \Z\to \Aut(A)$ be a strongly outer $\Z$-action satisfying $T(A) = T(A)^\alpha$, where $T(A)^\alpha = \{\tau\in T(A): \tau\circ \alpha = \tau \}$. Then the Rokhlin dimension of $\alpha$ is no greater than one.
\end{thm}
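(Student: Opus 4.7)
The plan is to extend the Matui--Sato argument for finitely many extremal traces to the Bauer simplex setting using the continuous $W^*$-bundle machinery of Ozawa. Under the standing hypotheses, the tracial completion of $A$ (the strict closure of $A$ in its universal finite representation) should carry the structure of a continuous $W^*$-bundle over $\partial T(A)$ whose fiber over $\tau$ is the tracial GNS von Neumann algebra $\pi_\tau(A)''$. I will construct Rokhlin towers in the central sequence algebra $A_\omega\cap A'$ by first producing them in (an ultrapower of) this bundle and then pulling them back via property (SI).

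\textbf{Fiberwise Rokhlin towers.} For each extreme $\tau\in\partial T(A)$ the weak closure $\pi_\tau(A)''$ is the hyperfinite $\mathrm{II}_1$ factor, by simplicity, nuclearity, finite nuclear dimension, and stable finiteness. The induced action $\alpha_\tau$ on $\pi_\tau(A)''$ is outer (by strong outerness of $\alpha$) and trace-preserving (since $\tau\circ\alpha=\tau$), so Connes' theorem furnishes $\alpha_\tau$ with the Rokhlin property in the tracial ultrapower. The standard two-tower refinement then produces, for every sufficiently large $n$, projections forming a pair of Rokhlin towers of lengths $n$ and $n+1$ that are cycled by $\alpha_\tau$ and partition the unit.

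\textbf{Gluing and descent.} The input beyond Matui--Sato is to arrange these fiberwise towers into a continuous section of the bundle. Since $\partial T(A)$ is compact Hausdorff of finite covering dimension, I would cover it by finitely many open sets over which the bundle is essentially trivial, use a subordinate partition of unity to patch local Rokhlin towers, and absorb the patching error within the two colors using a Matui--Sato type perturbation inside each fiber. Once one has positive contractions in the bundle-level central sequence algebra realizing $\rdim\le 1$ fiberwise, property (SI) will allow lifting to positive contractions in $A_\omega\cap A'$ satisfying the same defining relations, yielding $\rdim(\alpha)\le 1$.

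\textbf{Main obstacle.} The principal difficulty is the continuous gluing step. Fiberwise Rokhlin projections are highly non-unique, and naive measurable sections are only Borel; one must use Connes' $2\times 2$ matrix trick or an Elliott--Evans--Kishimoto style perturbation to make the local choices vary continuously in $\tau$. Reconciling the cycle-length constraints of the two-tower trick with the overlap data produced by the partition of unity---without introducing additional colors beyond the two allowed by $\rdim\le 1$---is the subtle combinatorial step. The remaining ingredients (the von Neumann input and the (SI)-based lifting) follow the pattern established in the case of finitely many extremal traces.
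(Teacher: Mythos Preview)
Your outline has the right architecture---bundle, fiberwise Connes, glue, lift---but two of the load-bearing steps are misidentified, and the one you flag as the main obstacle is not solved by the mechanism you propose.

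\textbf{The gluing step.} An ordinary partition of unity $\{g_j\}\subset C(\partial T(A))$ will not patch fiberwise Rokhlin projections: on the overlaps the cross terms $g_i^{1/2}p_{i,k}g_i^{1/2}\cdot g_j^{1/2}p_{j,\ell}g_j^{1/2}$ are not small in any relevant norm, so the orthogonality relations among tower levels are destroyed. The paper addresses this by a two-stage argument. First, using finite covering dimension of $\partial T(A)$ and Ocneanu's result on the fixed-point central sequence algebra of $\mathcal{R}$, one produces $(m{+}1)$ c.p.c.\ order zero maps $M_p\to (\mathcal{M}^\omega\cap\mathcal{M}')^{\beta_\omega}$ with commuting ranges summing to $1$, and hence (via Kirchberg--R{\o}rdam) a unital embedding $M_p\hookrightarrow(\mathcal{M}^\omega\cap\mathcal{M}')^{\beta_\omega}$ for every $p$. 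Second, this matrix room is used, via Ozawa's diagonal-element trick, to manufacture a \emph{mutually orthogonal} (in $\|\cdot\|_{2,u}$) partition of unity $a_1,\dots,a_N\in\mathcal{M}$ that simulates $g_1,\dots,g_N$ tracially while satisfying $\|a_ia_j\|_{2,u}<\varepsilon$ for $i\neq j$. Only with this refined partition can the fiberwise towers be glued into a single global tower in $\mathcal{M}^\omega\cap\mathcal{M}'$. Your ``Connes $2\times 2$ trick / Elliott--Evans--Kishimoto perturbation'' suggestion does not supply this orthogonality; the missing idea is precisely the matrix-algebra-in-fixed-point step.

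\textbf{The role of (SI) and the location of the two towers.} Property (SI) is not what lifts bundle-level elements to $F(A)=A_\omega\cap A'$; that lift is the Kirchberg--R{\o}rdam surjection $F(A)\twoheadrightarrow\mathcal{M}^\omega\cap\mathcal{M}'$ together with an $\alpha_\omega$-invariant quasi-central approximate unit in the trace-kernel ideal $J_A$ (an ``$\alpha$-$\sigma$-ideal'' argument). What this produces in $F(A)$ is a \emph{single} tower $f_1,\dots,f_n$ with $\alpha_\omega(f_k)=f_{k+1}$, $f_kf_\ell=0$, and $1-\sum_k f_k\in J_A$---an approximate Rokhlin property, not yet $\rdim\le1$. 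Property (SI) enters only now: it supplies $s\in F(A)$ with $s^*s=1-\sum f_k$ and $f_1s=s$, which feeds into Kishimoto's spectral lemma for $M_{\ell m+1}$ to build a c.p.c.\ order zero map intertwining a cyclic-shift-like automorphism with $\alpha_\omega$, and it is \emph{this} step that outputs the two towers of heights $m$ and $m{+}1$. Attempting the two-tower split at the fiber level, as you propose, only compounds the gluing problem and is not how the bound $\rdim\le1$ is obtained.
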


Property (SI), as introduced by Sato (\cite{Sa01}), is a technical condition that allows one to compare tracially small elements with tracially large elements. The precise definition will be given in Section 6. Here we simply remark that property (SI) is possessed by a huge class of $C^*$-algebras. For example, a highly non-trivial result of Matui and Sato says that every unital simple separable nuclear stably finite $\mathcal{Z}$-stable $C^*$-algebra has property (SI) (\cite{MaSa01}). Note that R\o rdam's result that $\mathcal{Z}$-stability implies strict comparison (\cite{Ro01}) plays an important role there. In view of Winter's remarkable theorem connecting finite nuclear dimension and $\mathcal{Z}$-stability (\cite{Wi01}), every unital simple separable stably finite $C^*$-algebra of finite nuclear dimension has property (SI).

Every $\Z$-action $\alpha:\Z\to \Aut(A)$ naturally induces an action on the trace space $T(A)$. The assumption $T(A) = T(A)^\alpha$ amounts to saying that the induced action on $T(A)$ is trivial. This assumption is trivially satisfied when $A$ has a unique trace, or more generally, when $\alpha$ is approximately inner. We do not know at this point whether this assumption is essential. However, in the case that $\partial T(A)$ is a finite set, the argument can be modified to prove finite Rokhlin dimension regardless of the induced action on the trace space (as mentioned earlier this case was first obtained by Matui and Sato).

The converse of Theorem \ref{thm:main} holds without assumptions on the trace space and the induced action. We like to think of having finite Rokhlin dimension as a strong form of outerness. We will show in Section 2 that whenever $\alpha$ has finite Rokhlin dimension, the restriction map $r:T(A\rtimes_\alpha \Z)\to T(A)^\alpha$ must be injective. From this we deduce, using Kishimoto's result (\cite{Ki03}), that the action $\alpha$ is strongly outer. The following corollary summarizes these observations.

\begin{cor}\label{cor:main}
Let $A$ be a unital separable simple stably finite $C^*$-algebra of finite nuclear dimension. Suppose the extreme boundary $\partial T(A)$ of the trace space $T(A)$ is closed and has finite topological covering dimension. Let $\alpha: \Z\to \Aut(A)$ be a $\Z$-action. Assume one of the following holds:
\begin{enumerate}
\item[(a)] $T(A) = T(A)^\alpha$;
\item[(b)] $\partial T(A)$ is a finite set.
\end{enumerate}
Then the following are equivalent.
\begin{enumerate}
\item[(1)] The Rokhlin dimension of $\alpha$ is no greater than one.
\item[(2)] For every $m\in \N$, the restriction map $r:T(A\rtimes_{\alpha^m} \Z)\to T(A)^{\alpha^m}$ is injective.
\item[(3)] The action $\alpha:\Z\to \Aut(A)$ is strongly outer.
\end{enumerate}
\end{cor}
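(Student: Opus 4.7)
The plan is to prove the cycle $(1)\Rightarrow(2)\Rightarrow(3)\Rightarrow(1)$, since each arrow pulls from a different source: the injectivity lemma to be developed in Section~2, Kishimoto's theorem \cite{Ki03}, and the main Theorem~\ref{thm:main} respectively.

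For $(1)\Rightarrow(2)$, I would first observe that Rokhlin dimension is non-increasing under passing to powers: if $\dim_{\mathrm{Rok}}(\alpha)\le 1$ then $\dim_{\mathrm{Rok}}(\alpha^m)\le 1$ for every $m\ne 0$. (Given a two-colored Rokhlin partition for $\alpha$ with tower length divisible by $m$, its $m$-fold coarsening is a Rokhlin partition for $\alpha^m$.) It therefore suffices to show that \emph{any} action $\beta$ of Rokhlin dimension at most one on $A$ has injective restriction map $T(A\rtimes_\beta\Z)\to T(A)^\beta$. This is exactly the statement the author promises to prove in Section~2, and the standard line of argument is to fix a tracial state $\sigma$ on the crossed product and use the two-colored Rokhlin towers $\{f_j^{(0)}\},\{f_j^{(1)}\}$ together with the trace identity $\sigma(xy)=\sigma(yx)$ to show $\sigma(au^n)$ is trace-norm small for every $n\ne 0$; letting the tower length go to infinity forces $\sigma(au^n)=0$, so $\sigma$ is determined by $\sigma|_A$.

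For $(2)\Rightarrow(3)$, I would argue by contrapositive using \cite{Ki03}. If $\alpha$ is not strongly outer, then there exist $m\ne 0$ and an extremal $\alpha^m$-invariant tracial state $\tau$ whose GNS representation makes $\alpha^m$ weakly inner, implemented by some unitary $v\in\pi_\tau(A)''$. Kishimoto's construction then produces a family of distinct tracial states on $A\rtimes_{\alpha^m}\Z$ all restricting to $\tau$, obtained by setting $\widetilde\tau_\lambda(au^k)=\lambda^k\tau(av^k)$ for $\lambda$ in the spectrum of $v$ (or equivalently, by noting that $A\rtimes_{\alpha^m}\Z$ maps into a crossed product by an inner action, which is a tensor product with $C^*(\Z)$). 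This yields two distinct traces with the same restriction, contradicting (2) for exponent $m$.

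For $(3)\Rightarrow(1)$, under hypothesis (a) this is precisely Theorem~\ref{thm:main}. Under hypothesis (b) one invokes the modification indicated in the remark following the main theorem: when $\partial T(A)$ is a finite set, the trace-space action is automatically through a finite permutation group, and after passing to a suitable power the argument of Theorem~\ref{thm:main} goes through without the trivial-action hypothesis, recovering the Matui--Sato case. I expect the main obstacle in writing a fully rigorous version to be this last reduction, since the finite-dimensional simplex machinery in the proof of Theorem~\ref{thm:main} is tailored to the case $T(A)=T(A)^\alpha$ and must be replaced by a careful handling of the finite orbits on $\partial T(A)$; once that is in place, the cycle closes using only the injectivity lemma and Kishimoto's theorem as black boxes.
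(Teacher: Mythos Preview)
Your cycle $(1)\Rightarrow(2)\Rightarrow(3)\Rightarrow(1)$ and the first two implications match the paper exactly: the paper proves the injectivity lemma as its Proposition~2.3 and notes that finite Rokhlin dimension passes to powers, then quotes Kishimoto's \cite[Proposition~2.3]{Ki03} verbatim for $(2)\Rightarrow(3)$. One small caution on your contrapositive for $(2)\Rightarrow(3)$: the negation of strong outerness hands you an arbitrary $\alpha^m$-invariant trace, not an extremal one, so you need a one-line direct-integral reduction before invoking Kishimoto's construction; the paper sidesteps this by citing Kishimoto's implication in the forward direction.

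Where your proposal diverges from the paper is case~(b) of $(3)\Rightarrow(1)$. You suggest ``passing to a suitable power'' so that the trace-space action becomes trivial and then running Theorem~\ref{thm:main}; but Rokhlin dimension $\leq 1$ for $\alpha^s$ does not obviously yield Rokhlin dimension $\leq 1$ for $\alpha$, so this reduction does not close the cycle. The paper does \emph{not} reduce to Theorem~\ref{thm:main} here. Instead, since $\partial T(A)$ finite means $\hm\cong\hr^{\oplus s}$, the paper bypasses all the bundle-gluing machinery and builds the bundle-level Rokhlin tower for $\tilde\alpha$ directly: one fixes a single fibre $\lambda$, applies Connes' Rokhlin lemma to $\beta_\lambda^s$ (which is outer by hypothesis) to get projections $q_1,\dots,q_n$ with $n\equiv 1\pmod s$, lifts them to $r_k'\in\hm$ supported only over $\lambda$, and then sets
\[
r_1:=\sum_{j=0}^{s-1}\beta^{nj}(r_1'),\qquad r_k:=\beta^{k-1}(r_1).
\]
The congruence $n\equiv 1\pmod s$ makes the $\beta$-shift and the $\beta_\lambda^s$-shift interact correctly, and one checks (a)--(e) of Theorem~\ref{thm:bundleRokhlin} by hand; the general case decomposes $\sigma$ into cycles and takes $n\equiv 1$ modulo the lcm of cycle lengths. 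After that, Proposition~\ref{prop:approxRokhlin} and Theorem~\ref{thm:kishimototrick} finish as before. So your instinct that case~(b) is the hard part is right, but the mechanism is an explicit averaging over the finite orbit rather than a reduction to the trivial-action case.
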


\noindent\textbf{Acknowledgement:}The author would like to thank Hiroki Matui and Yasuhiko Sato for sharing their unpublished work. Their result and proof will be described and reproduced in Section 6. The author also likes to thank his doctoral dissertation advisor Nate Brown, for many valuable discussions.


\section{Preliminaries and Overview}

In this section we present the most important definitions we need throughout the paper. We will provide a converse of Theorem \ref{thm:main}, as well as an overview of the proof of the main theorem.

Let us first recall the definition of Rokhlin dimension for $\Z$-actions.

\begin{defn} (\cite[Definition 2.3]{HWZ01} ) Let $A$ be a unital $C^*$-algebra and $\alpha:\Z\to \Aut(A)$ an action of the integers. We say $\alpha$ has \textit{Rokhlin dimension $d$},
$$
\dim_{Rok}(\alpha) = d,
$$
if $d$ is the least natural number such that the following holds: for any finite subset $F\subseteq A$, any $p\in \N$ and any $\e > 0$, there are positive contractions
$$
f_{0,0}^{(\ell)},...,f_{0,p-1}^{(\ell)}, f_{1,0}^{(\ell)},...,f_{1,p}^{(\ell)},\;\;\;\;\;\; \ell\in \{0,...,d\},
$$
in $A$ satisfying
\begin{enumerate}
\item[(1)] for any $\ell\in \{0,...,d\}, \| f_{q,i}^{(\ell)}f_{r,j}^{(\ell)} \| < \e$, whenever $(q,i) \neq (r,j)$,
\item[(2)] $\left\|  \sum_{\ell=0}^d \sum_{r=0}^1 \sum_{j=0}^{p-1+r} f_{r,j}^{(\ell)} - 1 \right\| < \e$,
\item[(3)] $\left\| \alpha_1(f_{r,j}^{(\ell)}) - f_{r,j+1}^{(\ell)}  \right\| < \e$ for all $r\in \{0,1\}$, $j\in \{0,1,...,p-2+r\}$ and $\ell\in \{0,...,d\}$,
\item[(4)] $\left\| \alpha_1( f_{0,p-1}^{(\ell)} + f_{1,p}^{(\ell)}   ) - ( f_{0,0}^{(\ell)} + f_{1,0}^{(\ell)}  )  \right\| < \e$ for all $\ell$,
\item[(5)] $\left\|  [f_{r,j}^{(\ell)}, a  ]  \right\| < \e$ for all $r,j,\ell$ and $a\in F$.
\end{enumerate}
If one can obtain this property such that for all $\ell$ one of the towers $(f_{0,j}^{(\ell)})$ or $(f_{1,j}^{(\ell)})$ vanishes, then we say that $\alpha$ has \textit{Rokhlin dimension $d$ with single towers}.
\end{defn}

\begin{rem} \label{rem:towerheight}
By \cite[Remark 2.4(vi)]{HWZ01}, in the above definition, instead of requiring the towers to have heights $p$ and $p-1$, one can require that there are such towers for arbitrarily large $p$, but not necessarily for every $p$.
\end{rem}

We can reformulate the definition using the notion of central sequence algebra. Let $A$ be a unital separable $C^*$-algebra and $\omega$ a free ultrafilter on $\N$. Set
$$
c_\omega(A) := \{ (a_n)_n\in \ell^\infty(\N,A) :\lim_{n\to\omega}\|a_n\| = 0 \}
$$
and define the \textit{ultraproduct} of $A$ by
$$
A_\omega := \ell^\infty(\N, A)/c_\omega(A).
$$
Since $A$ embeds into $A_\omega$ as constant sequences, we can form the relative commutant $F(A):= A_\omega\cap A'$, called the \textit{central sequence algebra} of $A$. Note that if $\alpha:\Z\to \Aut(A)$ is a $\Z$-action, then $\alpha$ can be promoted to well-defined actions $\alpha_\omega:\Z\to \Aut(A_\omega)$ and $\alpha_\omega:\Z\to \Aut(F(A))$ by setting $\alpha_\omega((a_n)_n) = ( \alpha(a_n) )_n$. One of the most important features of the ultraproduct (or the central sequence algebra) is that approximate relations in $A$ become exact in $A_\omega$ or $F(A)$. Therefore, one can define the Rokhlin dimension by requiring the existence of positive contractions $f_{0,0}^{(\ell)},...,f_{0,p-1}^{(\ell)}, f_{1,0}^{(\ell)},...,f_{1,p}^{(\ell)}$ in $F(A)$ instead of $A$, and asking (1)-(4) to hold exactly (with $\alpha_1$ replaced by $\alpha_\omega$).

The importance of having finite Rokhlin dimension can be seen from the fact that forming a crossed product by such an automorphism preserves finiteness of nuclear dimension. A more precise statement can be found in \cite{HWZ01}. Here we shall focus on the outerness property. The proof of the following proposition is essentially contained in \cite{Ki02}.

\begin{prop} (cf. \cite[Lemma 4.3]{Ki02})
Let $A$ be a unital simple separable $C^*$-algebra such that $T(A)\neq \emptyset$, and let $\alpha:\Z\to \Aut(A)$ be a $\Z$-action. If $\alpha$ has finite Rokhlin dimension, then the restriction map $r:T(A\rtimes_\alpha \Z)\to T(A)^\alpha$ is injective.
\end{prop}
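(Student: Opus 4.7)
The plan is to show that any trace $\tau$ on $A \rtimes_\alpha \Z$ is determined by its restriction to $A$. Since $\{au^n : a\in A, n\in \Z\}$ densely spans the crossed product, and $\tau|_A$ is automatically $\alpha$-invariant (from $\tau(\alpha(a)) = \tau(uau^*) = \tau(a)$), the problem reduces to showing that $\tau(au^n) = 0$ for every $a\in A$ and every $n\neq 0$. Once this vanishing is established, $\tau$ coincides with $(\tau|_A)\circ E$, where $E: A\rtimes_\alpha \Z \to A$ is the canonical conditional expectation, and injectivity of the restriction map is immediate.

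To prove the vanishing, I would first extend $\tau$ to a tracial state $\tau_\omega$ on $(A\rtimes_\alpha \Z)_\omega$ by $\tau_\omega([(x_k)]) := \lim_{k\to\omega}\tau(x_k)$, and fix $n\neq 0$ together with a large height $p$ (to be chosen at the end). Applying the central-sequence version of finite Rokhlin dimension, I obtain positive contractions $f_{r,j}^{(\ell)} \in F(A)$ satisfying (1)-(4) exactly. Via the inclusions $F(A) \hookrightarrow A_\omega \hookrightarrow (A\rtimes_\alpha\Z)_\omega$, these contractions commute with $A$ and satisfy $u f_{r,j}^{(\ell)} u^* = \alpha_\omega(f_{r,j}^{(\ell)})$ inside the crossed-product ultrapower, a fact immediate from the representing sequences.

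The main step is an averaging argument. Using (2), write $\tau_\omega(au^n) = \sum_{\ell,r,j}\tau_\omega(au^n f_{r,j}^{(\ell)})$. Setting $e = (f_{r,j}^{(\ell)})^{1/2}$ and using traciality together with $e\in A'$, each summand equals $\tau_\omega(au^n \alpha_\omega^{-n}(e)\,e)$. For ``interior'' indices $j$, relation (3) identifies $\alpha_\omega^{-n}(e_{r,j}^{(\ell)})$ with $e_{r,j-n}^{(\ell)}$, which is orthogonal to $e_{r,j}^{(\ell)}$ by (1), so the term is zero. Only at most $|n|$ ``boundary'' indices per $(r,\ell)$-tower survive, and Cauchy--Schwarz bounds each of these by $\|a\|\tau_\omega(f_{r,j}^{(\ell)})$. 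The $\alpha$-invariance of $\tau$ together with (3) forces $\tau_\omega(f_{r,j}^{(\ell)})$ to be independent of $j$, so applying $\tau_\omega$ to (2) yields $\sum_{\ell,r}(p+r)\tau_\omega(f_{r,0}^{(\ell)}) = 1$; hence the total boundary mass is at most $|n|/p$. Letting $p\to\infty$ (permitted by Remark 2.2) then gives $|\tau(au^n)| \leq |n|\,\|a\|/p \to 0$, as desired.

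The one point requiring care is that the Rokhlin contractions naturally live in the ultrapower of $A$, not of the crossed product, so the argument must be staged in $(A\rtimes_\alpha\Z)_\omega$ and invoke the identity $u f u^* = \alpha_\omega(f)$ for $f\in A_\omega$ in order to meaningfully conjugate Rokhlin elements by $u$. Granted this, the rest is a straightforward finite-tower averaging whose error shrinks with the tower height $p$, and the ``colored'' Rokhlin structure (multiple $\ell$'s) is absorbed without difficulty since orthogonality within each $\ell$ is all that is needed.
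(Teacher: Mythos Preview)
Your argument is correct, but it differs from the paper's in two notable ways. First, the paper immediately invokes \cite[Proposition 2.8]{HWZ01} to reduce to the single-tower (cyclic) version of Rokhlin dimension, whereas you work directly with the two-tower definition. Second, and more substantively, the paper avoids any boundary analysis: having cyclic towers, it simply takes height $p=2n$ so that $\alpha^n(f_k^{(\ell)})=f_{k+n\bmod p}^{(\ell)}$ is orthogonal to $f_k^{(\ell)}$ for \emph{every} $k$, and hence every summand $\varphi\big((f_k^{(\ell)})^{1/2}a\alpha^n((f_k^{(\ell)})^{1/2})u^n\big)$ vanishes (up to the prescribed $\e'$). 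Your route instead keeps $p$ generic, isolates the $|n|$ boundary indices per tower, and controls them via Cauchy--Schwarz together with the observation that $\tau_\omega(f_{r,j}^{(\ell)})$ is independent of $j$ and sums to $1$; this yields the bound $|n|\,\|a\|/p$, which you then send to zero. Both strategies are sound. The paper's is a bit slicker once the reduction to cyclic towers is granted, since no trace estimate is needed; yours is more self-contained (it does not rely on the single-tower reduction) and works just as well staged in $(A\rtimes_\alpha\Z)_\omega$ as you describe.
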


\begin{proof} 
Because of \cite[Proposition 2.8]{HWZ01}, without losing generality we may assume that $\alpha$ has Rokhlin dimension $d$ with single towers. Let $\varPhi:A\rtimes_\alpha \Z\to A$ be the canonical conditional expectation. Given $\varphi\in T(A\rtimes_\alpha \Z)$, we need to show that $\varphi = \varphi|_A \circ \varPhi$.  Let $u$ be the unitary in $A\rtimes_\alpha \Z$ implementing the action. It suffices to show that $\varphi(au^n) = 0$ for all $a\in A$ and $n\in \N$. 

Fix $n\in \N$, $a\in A$ with $\|a\| \leq 1$, and $\e > 0$. Put $p = 2n$ and $\varepsilon' = \frac{\e}{2(d+1)n+1}$. By the definition of the Rokhlin dimension (with single towers), we can find positive contractions $\{ f_{k}^{(\ell)}:1\leq k\leq p, \;0\leq \ell \leq d  \}$ in $A$ such that 
\begin{enumerate}
\item[(i)] $\left\| \sum_{\ell=0}^d \sum_{k=1}^p f_k^{(\ell)} - 1 \right\| < \e'$;
\item[(ii)] $\|  { f_k^{(\ell)} }^{ \frac{1}{2} }  a \alpha^n(  { f_k^{(\ell)} }^{\frac{1}{2}}  )  \| < \e'\;\;\;\; (\ell=0,1,...,d;\; k=1,...,p)  $
\end{enumerate}
(to see this, note that these two expressions are equal to zero in $F(A)$). Then
\begin{align*}
| \varphi(au^n) | &\leq \left|  \sum_{\ell=0}^d\sum_{k=1}^p \varphi(au^nf_k^{(\ell)})  \right| + \e' \\
&\leq \sum_{\ell=0}^d\sum_{k=1}^p \left|  \varphi(  {f_k^{(\ell)}}^{\frac{1}{2}}  a u^n  {f_k^{(\ell)}}^{\frac{1}{2}}  )   \right| + \epsilon' \\
&= \sum_{\ell=0}^d\sum_{k=1}^p \left|  \varphi({f_k^{(\ell)}}^{\frac{1}{2}} a \alpha^n( {f_k^{(\ell)}}^{\frac{1}{2}}   ) u^n   )    \right| + \e' \\
&\leq ((d+1)p + 1)\e' \\
&= (2(d+1)n+1 )\e' = \e.
\end{align*}
\end{proof}

Note that the implication $(1)\Longrightarrow (2)$ in Corollary \ref{cor:main} follows directly from the proposition above since if $\alpha$ has finite Rokhlin dimension, then so does each power of $\alpha$. Next we recall the definition of a strongly outer action from \cite{MaSa02}.

\begin{defn} (\cite[Definition 2.7]{MaSa02}) Let $A$ be a unital $C^*$-algebra with $T(A)\neq \emptyset$. An action $\alpha:G\to \Aut(A)$ of a discrete group $G$ on $A$ is \textit{strongly outer} if for every $g\in G\setminus\{e\}$ and $\tau\in T(A)^{\alpha_g}$, the weak extension $\tilde{\alpha}_g\in\Aut(\pi_\tau(A)'')$ is an outer automorphism.
\end{defn}

Now the implication $(2)\Longrightarrow (3)$ in Corollary \ref{cor:main} is obtained from the following result of Kishimoto:

\begin{prop}\label{prop:kishimotoouterness} (\cite[Proposition 2.3]{Ki03}) Let $A$ be a unital $C^*$-algebra with $T(A)\neq \emptyset$ and $\alpha$ be an automorphism of $A$. If $T(A\rtimes_\alpha \Z)$ is isomorphic with $T(A)^\alpha$ under the restriction map, then for every extreme point $\tau$ of $T(A)^\alpha$, the $\Z$-action on $\pi_\tau(A)''$ induced by $\alpha$ is outer.
\end{prop}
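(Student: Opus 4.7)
The plan is to prove the contrapositive: assume there is an extreme point $\tau \in T(A)^\alpha$ for which the induced automorphism $\tilde{\alpha}$ of $M := \pi_\tau(A)''$ is inner, and construct a one-parameter family of distinct traces on $A \rtimes_\alpha \Z$ all restricting to $\tau$ on $A$. This directly contradicts injectivity of the restriction map and hence contradicts the hypothesis.

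First I would verify that $M$ is a factor. Extremality of $\tau$ in $T(A)^\alpha$ is equivalent, via the standard central decomposition argument, to ergodicity of the action $\tilde{\alpha}$ on the center $Z(M)$ with respect to $\tau|_{Z(M)}$. Since any inner automorphism acts as the identity on the center, ergodicity forces $Z(M) = \C 1$, so $M$ is a factor. Now write $\tilde{\alpha} = \Ad v$ for some unitary $v \in M$. Because $\lambda v$ implements $\tilde{\alpha}$ for every $\lambda \in \T$, the assignments $a \mapsto \pi_\tau(a)$ for $a \in A$ and $u \mapsto \lambda v$ extend to a well-defined $*$-homomorphism $\tilde{\pi}_\lambda : A \rtimes_\alpha \Z \to M$. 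Composing with the canonical normal trace $\tau$ on $M$ produces tracial states $\tau_\lambda := \tau \circ \tilde{\pi}_\lambda$ on the crossed product, each restricting to $\tau$ on $A$, and satisfying
\[
\tau_\lambda(a u^n) = \lambda^n \, \tau(\pi_\tau(a) v^n), \qquad a \in A,\ n \in \Z.
\]

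To finish, I would show that $\lambda \mapsto \tau_\lambda$ is non-constant. If instead all $\tau_\lambda$ coincided, then for every $n \geq 1$ and every $a \in A$ we would have $\tau(\pi_\tau(a) v^n) = 0$, meaning that $v^n$ is $L^2(M,\tau)$-orthogonal to $\pi_\tau(A)$; but $\pi_\tau(A)$ is strong-operator (hence $L^2$-) dense in $M$, so this would force $v^n = 0$, contradicting unitarity of $v$. Hence there exist $a \in A$ and $n \geq 1$ with $\tau(\pi_\tau(a) v^n) \neq 0$, and then distinct $\lambda \in \T$ yield distinct traces $\tau_\lambda$ with identical restriction to $A$, violating injectivity of $r$.

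The main subtlety is the factoriality step: one must be careful that extremality of $\tau$ specifically within the \emph{invariant} traces corresponds to ergodicity of $\tilde{\alpha}$ on $Z(M)$ rather than to $M$ itself being a factor, and then exploit that inner automorphisms are trivial on the center to collapse $Z(M)$ to scalars. Once this is in hand, the construction of the family $\tilde{\pi}_\lambda$ and the density argument in $L^2(M,\tau)$ are routine.
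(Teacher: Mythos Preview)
The paper does not supply a proof of this proposition; it is quoted directly from Kishimoto \cite{Ki03}, so there is no argument in the paper to compare yours against.

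For the case you actually treat, your argument is correct: when the generator $\tilde{\alpha}$ is inner on $M = \pi_\tau(A)''$, the factoriality step goes through (ergodicity of $\tilde{\alpha}$ on $Z(M)$ together with $\tilde{\alpha}|_{Z(M)} = \id$ gives $Z(M) = \C$), and the covariant pairs $(\pi_\tau, \lambda v)$ combined with the $L^2$-density argument produce the required family of distinct traces. The gap is that the conclusion asserts the \emph{entire} $\Z$-action is outer, i.e., $\tilde{\alpha}^n$ is outer for every $n \neq 0$, while your contrapositive only assumes $\tilde{\alpha}$ itself is inner. If $\tilde{\alpha}$ is outer but some $\tilde{\alpha}^n$ with $n \geq 2$ is inner, your construction fails on two counts: innerness of $\tilde{\alpha}^n$ only yields $\tilde{\alpha}^n|_{Z(M)} = \id$, so the ergodic action on $Z(M)$ factors through a finite cyclic group and $Z(M)$ may be $\C^m$ with $m \mid n$ rather than $\C$; and since $\tilde{\alpha}$ is not implemented by a unitary in $M$, there is no homomorphism $A \rtimes_\alpha \Z \to M$ of the form you write down. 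The standard remedy is to map $A \rtimes_\alpha \Z$ into the von Neumann crossed product $N = M \rtimes_{\tilde{\alpha}} \Z$ instead and show that innerness of some $\tilde{\alpha}^n$ forces $Z(N)$ to be nontrivial (for instance, when $M$ is a factor and $\tilde{\alpha}^n = \Ad v$, a suitable power of $v^* U^n \in M' \cap N$ is central and non-scalar); the resulting multiplicity of normal traces on $N$ then pulls back to distinct traces on $A \rtimes_\alpha \Z$ all restricting to $\tau$.
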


The proof of Theorem \ref{thm:main} (and hence the implication $(3)\Longrightarrow (1)$ in Corollary \ref{cor:main}) relies on the notion of continuous $W^*$-bundle introduced by Ozawa in \cite{Oz01}. We recall the basic definitions from \cite{Oz01} and \cite{BBSTWW}.

\begin{defn} (\cite[Section 5]{Oz01})
A \textit{continuous $W^*$-bundle over a compact metrizable space} (or simply a \textit{continuous $W^*$-bundle}) is a triple $(\hm,K,E)$, where
\begin{itemize}
\item $\hm$ is a $C^*$-algebra,
\item $K$ is a compact metrizable space such that $C(K)\subseteq Z(\hm)$, the center of $\hm$,
\item $E:\hm\to C(K)$ is a faithful conditional expectation with $E(ab) = E(ba)$ for all $a,b\in \hm$
\end{itemize}
such that the norm-closed unit ball of $\hm$ is complete in the \textit{uniform 2-norm} $\|\cdot\|_{2,u}$ defined by $\|x\|_{2,u} := \| E(x^*x)\|^{1/2}$. We say $(\hm, K, E)$ is \textit{separable} if $\hm$ contains a countable $\|\cdot\|_{2,u}$-dense subset. For each $\lambda\in K$, the GNS representation $\pi_\lambda(\hm)$ coming from the tracial state $ev_\lambda\circ E$ is called a \textit{fiber} of $(\hm, K, E)$.
\end{defn}

We shall say a few words about the ``continuity'' of a continuous $W^*$-bundle $(\hm, K, E)$. For each $x\in \hm$, observe that $E(x^*x)$ is a (positive) continuous function on $K$. Therefore for each $\lambda\in K$ and $\e > 0$ we can find a neighborhood $U$ of $\lambda$ such that
$$
\sup_{\mu\in U}\left| E(x^*x)(\mu)^{1/2} - E(x^*x)(\lambda)^{1/2}  \right| < \e.
$$
Write $\|x\|_{2,\lambda} := \tau_\lambda(x^*x)^{1/2} = E(x^*x)(\lambda)^{1/2}$. Then we arrive at the following ``2-norm continuity'':

\begin{prop}\label{prop:2normcontinuity} Let $(\hm, K, E)$ be a continuous $W^*$-bundle. Let $x\in \hm$ and $\lambda\in K$. Then for every $\e > 0$, there exists a neighborhood $U$ of $\lambda$ such that
$$
\sup_{\mu\in U}\left| \|x\|_{2,\mu} - \|x\|_{2,\lambda}   \right| < \e.
$$
\end{prop}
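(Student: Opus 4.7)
The statement is essentially a restatement of the continuity of $E(x^{*}x)\in C(K)$, so the plan is to reduce the claim about $\|\cdot\|_{2,\mu}$ to continuity of a square root of a positive continuous function. Specifically, by the definition of the bundle we have $\|x\|_{2,\mu}^{2}=\tau_{\mu}(x^{*}x)=E(x^{*}x)(\mu)$, and the element $E(x^{*}x)$ lies in $C(K)$. Since $E$ is positive, the function $f:=E(x^{*}x)\in C(K)$ is nonnegative, and $\|x\|_{2,\mu}=\sqrt{f(\mu)}$.

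The main step is then to invoke continuity of $f$ together with uniform continuity of the square root function on the compact interval $[0,\|f\|_{\infty}]$. Given $\varepsilon>0$, I pick $\delta>0$ such that $|s-t|<\delta$ implies $|\sqrt{s}-\sqrt{t}|<\varepsilon$ for $s,t\in[0,\|f\|_{\infty}]$. By continuity of $f$ at $\lambda$, there is a neighborhood $U$ of $\lambda$ with $|f(\mu)-f(\lambda)|<\delta$ for all $\mu\in U$. Applying the uniform continuity of the square root then gives
\[
\sup_{\mu\in U}\bigl|\|x\|_{2,\mu}-\|x\|_{2,\lambda}\bigr|=\sup_{\mu\in U}\bigl|\sqrt{f(\mu)}-\sqrt{f(\lambda)}\bigr|<\varepsilon,
\]
which is exactly the desired estimate.

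There is no real obstacle here; this proposition is included as a convenient reference for later arguments, and everything follows immediately from the axiomatic fact that $C(K)\subseteq Z(\hm)$ and that $E$ takes values in $C(K)$. The only mild point worth emphasizing in the write-up is that one should avoid dividing by $\sqrt{f(\lambda)}$ (which would fail if $f(\lambda)=0$); using uniform continuity of $\sqrt{\cdot}$ on a compact interval sidesteps this issue cleanly, as already hinted in the discussion preceding the proposition.
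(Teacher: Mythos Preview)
Your proof is correct and follows exactly the approach the paper itself sketches in the paragraph preceding the proposition: identify $\|x\|_{2,\mu}$ with $E(x^*x)(\mu)^{1/2}$ and use that $E(x^*x)\in C(K)$ is a nonnegative continuous function, so its square root is continuous. Your use of uniform continuity of $\sqrt{\cdot}$ on $[0,\|f\|_\infty]$ is just a slightly more explicit way of justifying the same step.
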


Next we discuss morphisms in the category of continuous $W^*$-bundles. Let $(\hm,K,E_\hm)$ and $(\hn,L,E_\hn)$ be continuous $W^*$-bundles. A \textit{morphism} from $(\hm,K,E_\hm)$ to $(\hn,L,E_\hn)$ is a pair $(\theta, \sigma)$, where
\begin{itemize}
\item $\theta:\hm\to\hn$ is a *-homomorphism,
\item $\sigma:L\to K$ is a continuous map, inducing $\bar{\sigma}:C(K)\to C(L)$,
\end{itemize}
such that the diagram
\begin{center}
\begin{tikzpicture}[node distance=2cm, auto]
  \node (1) {$\hm$};
  \node (2) [right of=1] {$\hn$};
  \node (6) [below of=1] {$C(K)$};
  \node (7) [below of=2] {$C(L)$};
  \draw[->] (1) to node {$\theta$} (2);
  \draw[->] (6) to node {$\bar{\sigma}$} (7);
  \draw[->] (2) to node {$E_\hn$} (7);
  \draw[->] [swap] (1) to node {$E_\hm$} (6);
\end{tikzpicture}.
\end{center}
commutes (cf. \cite{BBSTWW}). When $\theta$ is a *-isomorphism and $\sigma$ is a homeomorphism, we say that $(\theta, \sigma)$ is an \textit{isomorphism} of continuous $W^*$-bundles. In the case that $(\hn, E_\hn, L) = (\hm, E_\hm, K)$, we say $(\theta, \sigma)$ is an \textit{automorphism} of $(\hm, K, E)$.

One of the motivations of studying continuous $W^*$-bundles is the following example:

\begin{example} \label{exam:2-norm completion} (Uniform 2-norm completion; cf. \cite{Oz01, BBSTWW}) Let $A$ be a unital separable $C^*$-algebra with $T(A)\neq \emptyset$. Define the \textit{uniform 2-norm} $\|\cdot\|_{2,u}$ on $A$ by
$$
\|a\|_{2,u} := \sup_{\tau\in T(A)} \tau(a^*a)^{1/2}.
$$
Let $\overline{A}^u$ be the $C^*$-algebra of norm-bounded uniform 2-norm Cauchy sequences, modulo the ideal of uniform 2-norm null sequences, i.e., $\overline{A}^u$ is the \textit{uniform 2-norm completion} of $A$.

Ozawa proved in \cite{Oz01} that when $\partial T(A)$ is compact, there exists a (faithful tracial) conditional expectation $E:\overline{A}^u\to C(\partial T(A))$ satisfying $E(a)(\tau) = \tau(a)$ for each $a\in A$ and $\tau\in \partial T(A)$. This gives $(\overline{A}^u, \partial T(A), E)$ the structure of a continuous $W^*$-bundle. Ozawa also showed that the fibers are given by $\pi_\tau(\overline{A}^u) \cong \pi_\tau(A)''$ (\cite[Theorem 3]{Oz01}). In the case that $A$ is simple and nuclear, all fibers are isomorphic to the hyperfinite II$_1$ factor $\hr$.
\end{example}

Let us describe the strategy of proving Theorem \ref{thm:main}. For this discussion, we write $(\hm, K, E)$ for the continuous $W^*$-bundle $(\overline{A}^u, \partial T(A), E )$, as defined in Example \ref{exam:2-norm completion}. We apply Connes' non-commutative Rokhlin theorem \cite{Con01} to the outer action induced by $\alpha$ on each fiber $\pi_\tau(A)''$. This produces a Rokhlin tower consisting of projections in each fiber of the bundle $(\hm, K, E)$. Now the first objective is to glue these towers together and build a global Rokhlin tower in the bundle $(\hm, K, E)$. A standard arugument for this kind of ''local to global'' result is to make use of a partition of unity. However, the naive approach, namely taking a partition of unity from $C(K)$, would not work, since any overlap of the open subsets of $K$ would destroy the orthogonality among the Rokhlin elements. 

Therefore we take a more indirect route, which consists of two steps. The first step is to establish the fact that the fixed point subalgebra of the central sequence $W^*$-bundle $\hm^{\omega}\cap\hm'$ (see Section 3 for the precise definition) contains a unital copy of the matrix algebra $M_p$ for each $p\in \N$ (see Proposition \ref{prop:fixpointalgebra}). This is done by gluing a fiberwise result due to Ocneanu (\cite{Oc01}) using an ordinary partition of unity. We remark that the topological finite dimensionality of the extreme boundary plays a crucial role here. In fact, by an observation of Kirchberg and R\o rdam (Proposition \ref{prop:fixpointorderzero}), it suffices to produce finitely many order zero maps, i.e., completely positive maps preserving orthogonality (see the paragraph before Proposition \ref{prop:fixpointorderzero}), with commuting ranges from the matrix algebra into the fixed point subalgebra of $\hm^\omega\cap\hm'$. The finiteness of topological dimensionality comes into play here and is closely related to the number of order zero maps.

The second step is to utilize the extra room provided by these matrix algebras and obtain a ''mutually orthogonal'' partition of unity in the bundle $(\hm, K, E)$. The idea, largely due to Ozawa (cf. \cite{Oz01}), is that given an ordinary partition of unity $g_1,g_2,...,g_N$ in $C(K)\subseteq Z(\hm)$, for each $\lambda\in K$ we can divide the identity in $M_p$ into positive elements $p_1(\lambda),p_2(\lambda),...,p_N(\lambda)$ so that $\sum_{k=1}^N p_k(\lambda) = 1_p$ and the (normalized) trace of $p_k(\lambda)$ coincides with the value of $g_k$ at $\lambda$ (see the proof of Lemma \ref{lem:bundlepartition}). Since the uniform 2-norm on $\hm$ essentially arises from traces, (the global sections obtained from) these elements in some sense are good ''simulations'' of the functions $g_1,g_2,...,g_N$. The crucial point of this construction is that when $p$ is sufficiently large, the positive elements $p_1,p_2,...,p_N$ become almost mutually orthogonal projections in terms of the uniform 2-norm. Now we are in a position to establish a ''global'' Rokhlin type theorem for the bundle $(\hm, K, E)$ using this new partition of unity (see Theorem \ref{thm:bundleRokhlin}).

One of the key ingredients appearing in the interaction of von Neumann algebras and $C^*$-algebras is the surjectivity of the canonical map $F(A)\to N^\omega\cap N'$ between central sequence algebras, proven by Sato \cite{Sa02} under a nuclearity assumption and by Kirchberg and R\o rdam \cite{KirRo01} in general. Here $N$ is the weak closure of $A$ in a tracial representation. In Section 5 we apply this technique, together with an invariant approximate unit trick (see Proposition \ref{prop:alphasigmaideal}), to lift the Rokhlin tower from $\hm^\omega\cap \hm'$ back to $F(A)$. This gives us the so-called \textit{approximate Rokhlin property} for $A$. The difference between the approximate Rokhlin property and the genuine Rokhlin property is that in the approximate version the sum of the Rokhlin elements approximates $1_A$ only up to the uniform 2-norm. To correct this defect, in Section 6 we reproduce a matrix algebra technique of Matui and Sato (the technique is a variation of what appeared first in Kishimoto's work on UHF algebras; cf. \cite{Ki02}) so that we can merge the defect element to one of the Rokhlin towers, and therefore complete the proof.


\section{Fixed point subalgebra of central sequence $W^*$-bundles}

We discuss briefly the construction of the ultraproduct of a sequence of continuous $W^*$-bundles $(\hm_n,K_n,E_n)$ with uniform 2-norms $\|\cdot\|_{2,u}^{(n)}$. This part is largely taken from \cite{BBSTWW}. Let $\prod^\omega \hm_n$ be the $C^*$-algebra consisting of norm-bounded sequence $(x_n)_{n=1}^\infty$, where each $x_n$ belongs to $\hm_n$,  modulo the ideal of uniform 2-norm null sequences. Define the uniform 2-norm on $\prod^\omega \hm_n$ by
$$
\|(x_n)_n\|_{2,u}^{(\omega)} := \lim_{n\to\omega} \|x_n\|_{2,u}^{(n)}.
$$
Let $\prod_\omega C(K_n)$ be the $C^*$-ultraproduct of the sequence of $C^*$-algebras $(C(K_n))_n$, and let $\Sigma_\omega K_n$ denote the \textit{ultracoproduct} of $(K_n)_n$, defined to be the space satisfying $C(\Sigma_\omega K_n) \cong \prod_\omega C(K_n)$ via the Gelfand duality.

\begin{prop}\label{prop:bundleultraproduct} $($\cite[Proposition 3.9]{BBSTWW}$)$ Let $(\hm_n, K_n, E_n)$ be a sequence of continuous $W^*$-bundles and let $\omega$ be a free ultrafilter on $\N$. Then the ultraproduct $\prod^\omega \hm_n$ is a continuous $W^*$-bundle over $\Sigma_\omega K_n$ via the conditional expectation $E^\omega:\prod^\omega \hm_n\to \prod_\omega C(K_n)$ induced by $E_n:\hm_n\to C(K_n)$. Moreover,
$$
\| E^\omega(x^*x)\|^{1/2} = \|x\|_{2,u}^{(\omega)}\;\;\;\;\;\; (x\in \prod^\omega \hm_n).
$$
\end{prop}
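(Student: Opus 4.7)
The plan is to check the four structural axioms defining a continuous $W^*$-bundle, one by one, and to derive the $\|\cdot\|_{2,u}^{(\omega)}$ formula along the way.

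First I would set up $E^\omega$ and identify its target. Each $E_n$ is a 2-norm contraction (by the Schwarz inequality for unital completely positive maps, $E_n(a)^*E_n(a) \leq E_n(a^*a)$, so $\|E_n(a)\|_{2,u}^{(n)} \leq \|a\|_{2,u}^{(n)}$), hence the coordinatewise rule $(x_n)_n \mapsto (E_n(x_n))_n$ sends 2-norm null sequences to 2-norm null sequences and descends to a well-defined contraction $E^\omega$ from $\prod^\omega \hm_n$ into the bounded-sequences-modulo-2-norm-null-sequences construction over $\{C(K_n)\}$. On each commutative fiber $C(K_n)$ the uniform 2-norm collapses to the operator norm (since $E_n|_{C(K_n)} = \id$), so this quotient coincides with the usual C*-ultraproduct $\prod_\omega C(K_n)$, which is $C(\Sigma_\omega K_n)$ by the definition of the ultracoproduct. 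The containment $C(\Sigma_\omega K_n) \subseteq Z(\prod^\omega \hm_n)$ is immediate because commutators vanish fiberwise, and that $E^\omega$ is a conditional expectation satisfying $E^\omega(ab) = E^\omega(ba)$ follows coordinatewise from the corresponding properties of the $E_n$.

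Next I would read off the 2-norm formula. The norm on the C*-ultraproduct $\prod_\omega C(K_n)$ is the ultralimit of fiber norms, so for $x = (x_n)_n$,
$$
\|E^\omega(x^*x)\| \;=\; \lim_{n\to\omega}\|E_n(x_n^*x_n)\| \;=\; \lim_{n\to\omega}\bigl(\|x_n\|_{2,u}^{(n)}\bigr)^2 \;=\; \bigl(\|x\|_{2,u}^{(\omega)}\bigr)^2,
$$
which simultaneously yields the stated identity and the faithfulness of $E^\omega$.

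The main obstacle is the uniform 2-norm completeness of the closed unit ball of $\prod^\omega \hm_n$, since a Cauchy sequence ranges over all indices $n$ at once and no obvious fiberwise limit exists. I would handle this with the standard ultrafilter diagonal trick. Given a $\|\cdot\|_{2,u}^{(\omega)}$-Cauchy sequence $(y^{(k)})_k$ in the unit ball, pass to a subsequence with $\|y^{(k+1)} - y^{(k)}\|_{2,u}^{(\omega)} < 2^{-k}$ and choose contractive representatives $(y^{(k)}_n)_n$ for each $y^{(k)}$. Pick a nested family $X_1 \supseteq X_2 \supseteq \cdots$ in $\omega$ with $X_k \subseteq \{n \geq k\}$ such that $\|y^{(k+1)}_n - y^{(k)}_n\|_{2,u}^{(n)} < 2^{-k}$ for all $n \in X_k$. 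Define $k(n) := \max\{k : n \in X_k\}$, which is finite because $\bigcap_k X_k = \emptyset$, and set $z_n := y^{(k(n))}_n$. A telescoping estimate along $X_k$ gives $\|z_n - y^{(k)}_n\|_{2,u}^{(n)} \leq 2^{-k+1}$ for $n \in X_k$, so the class $z := [(z_n)_n]$ in $\prod^\omega \hm_n$ is contractive and satisfies $\|z - y^{(k)}\|_{2,u}^{(\omega)} \leq 2^{-k+1}$. Hence $z$ is the desired 2-norm limit, completing the verification.
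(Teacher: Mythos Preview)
The paper does not supply its own proof of this proposition; it is quoted verbatim as \cite[Proposition 3.9]{BBSTWW} and used as a black box. So there is nothing in the paper to compare your argument against.

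That said, your proof is correct and is essentially the standard one. The coordinatewise verification of the bundle axioms is routine, the identification of the $2$-norm and the operator norm on $C(K_n)$ (so that the $2$-norm ultraproduct of the $C(K_n)$ agrees with their $C^*$-ultraproduct) is the right observation, and the displayed computation for $\|E^\omega(x^*x)\|$ is exactly how one reads off the $2$-norm formula and faithfulness. Your diagonal argument for completeness of the unit ball is the standard one for tracial-type ultraproducts; the only cosmetic issue is that $k(n) := \max\{k : n \in X_k\}$ is undefined for $n \notin X_1$, but since $X_1 \in \omega$ you may set $z_n = 0$ there without affecting the class of $(z_n)_n$.
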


In the case that $\hm_n = \hm$ and $K_n = K$ for each $n\in \N$, we write $(\hm^\omega, K^\omega, E^\omega)$ or simply $\hm^\omega$ for the ultraproduct. As usual, the canonical embedding $\hm\to \hm^\omega$ allows one to form the \textit{central sequence $W^*$-bundle} $\hm^\omega\cap \hm'$.

Next we consider automorphisms of continuous $W^*$-bundles. Suppose $(\beta,\sigma)$ is an automorphism of a continuous $W^*$-bundle $(\hm, K, E)$. Then for each $\lambda\in K$ there is a well-defined *-isomorphism, called the \textit{fiber map}, $\beta_\lambda:\pi_\lambda(\hm)\to \pi_{\sigma^{-1}(\lambda)}(\hm)$ defined by
$$
\beta_\lambda(\pi_\lambda(x)) = \pi_{\sigma^{-1}(\lambda)}(\beta(x))\;\;\;\;\;\;\;\; (x\in\hm).
$$
In the case that $\sigma$ is the identity map on $K$, the map $\beta_\lambda$ gives rise to an automorphism of $\pi_\lambda(\hm)$.

Every automorphism $(\beta,\sigma)$ of $(\hm,K,E)$ can be promoted to an automorphism of the ultraproduct $(\hm^\omega, K^\omega, E^\omega)$ or the central sequence $W^*$-bundle $(\hm^\omega\cap \hm',K^\omega, E^\omega)$ in the following way. Define
$$
\beta_\omega:\hm^\omega\to\hm^\omega,\;\;\; \beta_\omega((x_n)_n) = (\beta(x_n))_n
$$
and 
$$
\bar{\sigma}_\omega:C(K^\omega) \to C(K^\omega),\;\;\; \bar{\sigma}_\omega((f_n)_n) = (f_n\circ \sigma)_n
$$
(here we identify $C(K^\omega)$ with $\prod_\omega C(K)$). It is clear that $\beta_\omega$ and $\bar{\sigma}_\omega$ are automorphisms of $C^*$-algebras. Let $\sigma_\omega:K^\omega\to K^\omega$ be the map induced by $\bar{\sigma}_\omega$ via the Gelfand duality. In order to show that $(\beta_\omega, \sigma_\omega)$ form a $W^*$-bundle morphism, we make a small digression. The set theoretic \textit{ultraproduct} $\prod_\omega K_n$ (or $K_\omega$ when $K_n = K$ for all $n$) of a sequence of compact spaces $K_n$ is defined to be the product space $\prod_{n=1}^\infty K_n$ modulo the equivalence relation given by $(\lambda_n)\sim (\mu_n)$ if and only if the set $\{ n:\lambda_n = \mu_n \}$ belongs to the ultrafilter $\omega$ (cf. \cite{BBSTWW}). The ultraproduct $\prod_\omega K_n$ can be identified with a dense subset of the ultracoproduct $\sum_\omega K_n$ via the map
$$
\prod_\omega K_n\to \left( \prod_\omega C(K_n)  \right)^*,\;\;\; (\lambda_1,\lambda_2,...)\mapsto \lambda,
$$
where $\lambda:\prod_\omega C(K_n)\to \C$ is the character defined by $\lambda((g_n)_n) = \lim_{n\to \omega} g_n(\lambda_n)$.

\begin{prop}\label{prop:automorphism}
Let $(\beta,\sigma)$ be an automorphism of a continuous $W^*$-bundle $(\hm, K, E)$. Then $(\beta_\omega, \sigma_\omega)$ is an automorphism of $(\hm^\omega, K^\omega, E^\omega)$.
\end{prop}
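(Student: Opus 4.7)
The plan is to prove the statement in two stages. First I would verify that the formulas defining $\beta_\omega$ and $\sigma_\omega$ really make sense at the level of the ultraproduct and the ultracoproduct, and that they are respectively a $*$-isomorphism and a homeomorphism. Second I would check the morphism diagram $E^\omega \circ \beta_\omega = \bar{\sigma}_\omega \circ E^\omega$ by evaluating both sides on representatives.

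For well-definedness of $\beta_\omega$, the key observation is that $\beta$ is isometric with respect to $\|\cdot\|_{2,u}$. This is a direct consequence of the morphism property $E \circ \beta = \bar{\sigma} \circ E$ together with the fact that $\bar{\sigma}$, being the $*$-automorphism of $C(K)$ induced by the homeomorphism $\sigma$, is an isometry of $C(K)$: for any $x \in \hm$,
\[
\|\beta(x)\|_{2,u}^{2} \;=\; \|E(\beta(x^*x))\| \;=\; \|\bar{\sigma}(E(x^*x))\| \;=\; \|E(x^*x)\| \;=\; \|x\|_{2,u}^{2}.
\]
Consequently, $\beta$ sends norm-bounded uniform $2$-norm null sequences to sequences of the same type, so $\beta_\omega$ descends to a well-defined $*$-homomorphism of $\hm^\omega$. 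Applying the same argument to the morphism $(\beta^{-1},\sigma^{-1})$, which is again an automorphism of $(\hm,K,E)$, yields a descent of $\beta^{-1}$ that is a two-sided inverse to $\beta_\omega$. Hence $\beta_\omega$ is a $*$-automorphism of $\hm^\omega$.

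For $\sigma_\omega$, the map $\bar{\sigma}_\omega$ defined entrywise on $\prod_\omega C(K)$ is manifestly a $*$-automorphism (the entrywise $\overline{\sigma^{-1}}_\omega$ is its inverse), so after identifying $\prod_\omega C(K) \cong C(K^\omega)$ via the Gelfand duality, the dual $\sigma_\omega : K^\omega \to K^\omega$ is a homeomorphism. As a sanity check, on the dense subset $\prod_\omega K \subseteq K^\omega$ this $\sigma_\omega$ acts coordinatewise, sending $(\lambda_n)_n$ to $(\sigma(\lambda_n))_n$, which is the natural expectation.

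Finally, to verify the commutativity of the morphism square for $(\beta_\omega,\sigma_\omega)$, I would pick a representative $(x_n)_n \in \prod^\omega \hm$ and compute
\[
E^\omega(\beta_\omega((x_n)_n)) \;=\; (E(\beta(x_n)))_n \;=\; (\bar{\sigma}(E(x_n)))_n \;=\; \bar{\sigma}_\omega(E^\omega((x_n)_n)),
\]
where the middle equality is precisely the morphism property of $(\beta,\sigma)$ applied pointwise in $n$. Together with the previous step, this shows $(\beta_\omega,\sigma_\omega)$ is a $W^*$-bundle automorphism of $(\hm^\omega,K^\omega,E^\omega)$. There is no serious obstacle here: the content is really bookkeeping, but the point that requires the morphism hypothesis in a nontrivial way is the isometry claim $\|\beta(x)\|_{2,u} = \|x\|_{2,u}$, which is what makes $\beta_\omega$ well-defined on the quotient by uniform $2$-norm null sequences in the first place.
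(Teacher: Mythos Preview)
Your proposal is correct. The paper's own proof is terser: it takes the well-definedness of $\beta_\omega$ and $\bar{\sigma}_\omega$ as clear from the preceding discussion and only verifies the morphism square. You fill this in explicitly, and your isometry argument $\|\beta(x)\|_{2,u} = \|x\|_{2,u}$ via $E\circ\beta = \bar{\sigma}\circ E$ is exactly the right justification.

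For the diagram itself, you and the paper take slightly different routes. You compute directly at the level of representatives in $\prod_\omega C(K)$, using that $E^\omega$ and $\bar{\sigma}_\omega$ are both defined coordinatewise, so the identity $E\circ\beta = \bar{\sigma}\circ E$ passes straight to the ultraproduct. The paper instead evaluates both sides at a point $\lambda = (\lambda_n)_n$ in the dense set-theoretic ultraproduct $K_\omega \subseteq K^\omega$, computes along the ultrafilter, and then invokes density to conclude. Your approach is a bit more direct and avoids the density step; the paper's approach makes the pointwise picture on $K^\omega$ explicit. Both are fine, and the content is the same.
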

\begin{proof}
We need to show that the diagram
\begin{center}
\begin{tikzpicture}[node distance=2cm, auto]
  \node (1) {$\hm^\omega$};
  \node (2) [right of=1] {$\hm^\omega$};
  \node (6) [below of=1] {$C(K^\omega)$};
  \node (7) [below of=2] {$C(K^\omega)$};
  \draw[->] (1) to node {$\beta_\omega$} (2);
  \draw[->] (6) to node {$\bar{\sigma}_\omega$} (7);
  \draw[->] (2) to node {$E^\omega$} (7);
  \draw[->] [swap] (1) to node {$E^\omega$} (6);
\end{tikzpicture}.
\end{center}
is commutative. Let $x$ be an element in $\hm^\omega$ represented by $(x_1,x_2,...)$ and let $\lambda\in K_\omega\subseteq K^\omega$ be represented by $(\lambda_1,\lambda_2,...)$. We compute
\begin{align*}
\left[ (\bar{\sigma}_\omega\circ E^\omega)(x)  \right](\lambda) &= \lim_{n\to \omega}[E(x_n)](\sigma(\lambda_n)) \\
&= \lim_{n\to \omega}[(\bar{\sigma}\circ E )(x_n)](\lambda_n) \\
&= \lim_{n\to\omega} [(E\circ \beta)(x_n)](\lambda_n) \\
&= [(E^\omega\circ \beta_\omega)(x)](\lambda).
\end{align*}
Since $K_\omega$ is dense in $K^\omega$, we have shown that $\bar{\sigma}_\omega \circ E^\omega = E^\omega\circ \beta_\omega$.
\end{proof}

We shall write $(\hm^\omega\cap \hm')^{\beta_\omega}$ for the fixed point subalgebra, namely the set of elements $x$ in $\hm^\omega\cap \hm'$ satisfying $\beta_\omega(x) = x$. Here is the main result of this section.

\begin{prop}
\label{prop:fixpointalgebra}
Let $(\hm, K, E)$ be a separable continuous $W^*$-bundle such that $\dim(K) = m < \infty$ and $\pi_\lambda(\hm)\cong \hr$ for every $\lambda\in K$. Suppose $(\beta, \id)$ is an automorphism of $(\hm, K, E)$, and suppose the fiber map $\beta_\lambda\in \Aut(\pi_\lambda(\hm))$ is pointwise outer for each $\lambda\in K$. Then for every $p\in \N$, there exists a unital *-homomorphism $\psi:M_p \to (\hm^\omega\cap \hm')^{\beta_\omega}$. 
\end{prop}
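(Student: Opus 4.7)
The strategy is to combine a fiberwise result of Ocneanu with the finite-dimensionality of $K$, via the order-zero-map criterion of Kirchberg--R\o rdam (Proposition~\ref{prop:fixpointorderzero}). The first step is the reduction this criterion supplies: it suffices to construct $m+1$ c.p.c.\ order zero maps
$$
\varphi_0, \ldots, \varphi_m : M_p \to (\hm^\omega \cap \hm')^{\beta_\omega}
$$
with pairwise commuting ranges such that $\sum_{j=0}^m \varphi_j(1_{M_p}) = 1$. Proposition~\ref{prop:fixpointorderzero} then assembles these into the desired unital $*$-homomorphism.

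The fiberwise input is Ocneanu's theorem: since each fiber $\pi_\lambda(\hm)$ is the hyperfinite II$_1$ factor $\hr$ and $\beta_\lambda$ is outer, the fixed point subalgebra $(\pi_\lambda(\hm)^\omega \cap \pi_\lambda(\hm)')^{\beta_\omega}$ contains a unital copy of $M_p$ for every $\lambda \in K$. Concretely, given a finite $F \subset \hm$, an $\e > 0$, and $\lambda \in K$, one can find $y^{(\lambda)}_{ab} \in \hm$ ($1 \leq a, b \leq p$) which satisfy the matrix unit relations, approximately commute with $F$, and are approximately $\beta$-invariant, all measured in the tracial seminorm $\|\cdot\|_{2,\lambda}$. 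By the 2-norm continuity of Proposition~\ref{prop:2normcontinuity}, these approximate relations then persist throughout a whole open neighborhood $V_\lambda$ of $\lambda$.

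Now the dimension hypothesis enters. Since $\dim K = m < \infty$, I cover $K$ by finitely many such neighborhoods and refine into a cover $\{U^{(j)}_k\}_{k}$ with coloring $j \in \{0, \ldots, m\}$, so that each $U^{(j)}_k$ lies inside some $V_{\lambda^{(j)}_k}$ and sets of the same color are pairwise disjoint. Let $\{f^{(j)}_k\} \subset C(K)$ be a subordinate partition of unity and set $f^{(j)} := \sum_k f^{(j)}_k$. A standard diagonal argument (letting $\e \to 0$ and $F$ exhaust a countable dense subset of $\hm$) produces elements $\hat y^{(j,k)}_{ab} \in \hm^\omega \cap \hm'$ pointwise fixed by $\beta_\omega$. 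Defining
$$
\varphi_j(e_{ab}) := \sum_k f^{(j)}_k \, \hat y^{(j,k)}_{ab},
$$
one checks that because the $f^{(j)}_k \in C(K) \subseteq Z(\hm^\omega)$ have pairwise disjoint supports for fixed $j$, each $\varphi_j$ is a c.p.c.\ order zero map with $\varphi_j(1_{M_p}) = f^{(j)}$, and $\sum_j \varphi_j(1_{M_p}) = 1$.

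The main obstacle is arranging commuting ranges for distinct colors $j \neq j'$, since open sets of different colors may overlap. I handle this with a successive construction: when building $\hat y^{(j',k')}_{ab}$, I require its defining sequence to asymptotically commute with the finitely many $\hat y^{(j,k)}_{ab}$ previously chosen. This is feasible because, at each fiber, the central sequence algebra of $\hr$ admits a copy of $M_p$ in the relative commutant of any separable subset, and the restriction of $\beta_\omega$ remains outer on the hyperfinite ambient, so Ocneanu's theorem can be applied once more inside that relative commutant. The separability of $\hm$ together with a standard diagonal argument then promote these asymptotic commutations to exact relations in $\hm^\omega$, producing the desired order zero maps with commuting ranges.
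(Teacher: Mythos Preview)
Your outline follows the same architecture as the paper's proof: reduce to Proposition~\ref{prop:fixpointorderzero} via Kirchberg--R\o rdam, apply Ocneanu fiberwise, propagate to neighborhoods by 2-norm continuity, use $\dim K = m$ for an $(m+1)$-colorable refinement, glue with a subordinate partition of unity, and arrange commuting ranges by an inductive refinement enlarging the centralizing set. The paper carries this out through Lemmas~\ref{lem:fixpointnbd}--\ref{lem:fixpointcpc}.

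There is, however, a genuine order-of-quantifiers issue in your execution. You fix the decomposable cover $\{U_k^{(j)}\}$ and partition of unity $\{f_k^{(j)}\}$ once, and then run a diagonal argument sending $\e\to 0$ to obtain elements $\hat y_{ab}^{(j,k)} \in (\hm^\omega\cap\hm')^{\beta_\omega}$. But the approximate matrix-unit, centrality, and $\beta$-invariance relations for the $y_{ab}^{(\lambda)}$ are only controlled in $\|\cdot\|_{2,\mu}$ for $\mu$ in a neighborhood of $\lambda$ \emph{that depends on $\e$ and $F$}; as $\e\to 0$ and $F$ grows, this neighborhood shrinks and need not contain the fixed $U_k^{(j)}$. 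Consequently the sequences you form do not represent elements of $\hm^\omega\cap\hm'$ or of the $\beta_\omega$-fixed subalgebra: those memberships are statements about the uniform 2-norm over all of $K$, not just over a single fiber, and your construction only controls a single fiber (and a shrinking patch around it) at each stage. Multiplying by $f_k^{(j)}$ afterwards does not repair this, since for $\mu\in\mathrm{supp}\,f_k^{(j)}$ away from $\lambda_k^{(j)}$ you have no asymptotic control of $\|[y_{ab}^{(j,k,n)},x]\|_{2,\mu}$ or $\|\beta(y_{ab}^{(j,k,n)})-y_{ab}^{(j,k,n)}\|_{2,\mu}$.

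The fix is exactly what the paper does: perform the gluing \emph{before} passing to the ultraproduct. For each tolerance $\e$ and each compact $\Omega$, choose the cover, the partition of unity, and the local u.c.p.\ maps together, and form the c.p.c.\ maps $\psi^{(\ell)} = \sum_j g_j^{(\ell)\,1/2}\varphi_j^{(\ell)}(\cdot)g_j^{(\ell)\,1/2}$ in $\hm$ (Lemma~\ref{lem:fixpointcpc}). Now the estimates for centrality, $\beta$-invariance, approximate multiplicativity, and cross-color commutation hold in the \emph{uniform} 2-norm, because the partition of unity localizes each estimate to a patch where it is valid. Only then does one take a sequence with $\e_n\to 0$ and assemble the order zero maps $\psi_\ell:M_p\to(\hm^\omega\cap\hm')^{\beta_\omega}$; the order-zero property follows from the relation $\psi_\ell(1)\psi_\ell(e^*e)=\psi_\ell(e)^*\psi_\ell(e)$ via Lemma~\ref{lem:orderzero}. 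In short: glue first, then diagonalize---not the other way around.
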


Recall that a completely positive (c.p.) map $\varphi:A\to B$ between $C^*$-algebras is called \textit{order zero} if it preserves orthogonality, i.e., $\varphi(a)\varphi(b) = 0$ whenever $a$ and $b$ are positive elements satisfying $ab=0$. In view of \cite[Lemma 7.6]{KirRo01}, it suffices to show the following:

\begin{prop}
\label{prop:fixpointorderzero}
Under the same assumptions as Proposition \ref{prop:fixpointalgebra}, for every $p\in \N$, there exist completely positive contractive (c.p.c.) order zero maps $\psi_1,\psi_2,...,\psi_{m+1}:M_p\to (\hm^\omega\cap \hm')^{\beta_\omega}$ with commuting ranges such that $\psi_1(1)+ \cdots + \psi_{m+1}(1) = 1$.
\end{prop}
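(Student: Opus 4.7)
The plan is to construct the $\psi_\ell$'s in three steps: a fiberwise Rokhlin construction via Ocneanu's theorem, a local lift of each fiberwise system to the bundle, and a global assembly using a colored partition of unity whose colour count matches $\dim K + 1 = m+1$.

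First, for each $\lambda \in K$ the fiber $\pi_\lambda(\hm)$ is isomorphic to $\hr$ and carries the pointwise outer $\Z$-action generated by $\beta_\lambda$. Ocneanu's theorem on outer $\Z$-actions on $\hr$ then furnishes a unital embedding $M_p \hookrightarrow (\pi_\lambda(\hm)^\omega \cap \pi_\lambda(\hm)')^{\beta_\lambda}$, hence a system of $\beta_\lambda$-fixed matrix units $\{e_{ij}^{(\lambda)}\}_{i,j=0}^{p-1}$ in the fiber central sequence algebra. Using a Sato/Kirchberg-R\o rdam type surjectivity that lifts central sequences from tracial von Neumann completions back to the $W^*$-bundle level, together with the 2-norm continuity of Proposition \ref{prop:2normcontinuity}, each such fiberwise system lifts to a unital $*$-homomorphism $\varphi_\lambda : M_p \to \hm^\omega \cap \hm'$ whose image is $\beta_\omega$-fixed after compression by the central projection in $C(K^\omega)$ corresponding to some open neighborhood $V_\lambda$ of $\lambda$.

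Since $K$ is compact metrizable with $\dim K = m$, a standard dimension-theoretic refinement of $\{V_\lambda\}$ yields a finite cover $\{U_{\ell,k} : \ell = 0, \dots, m;\; k = 1, \dots, N_\ell\}$ with $\{U_{\ell,k}\}_k$ pairwise disjoint for each fixed color $\ell$. Take a subordinate partition of unity $\{h_{\ell,k}\} \subseteq C(K) \subseteq Z(\hm)$ with $\supp h_{\ell,k} \subseteq U_{\ell,k}$, and for each $(\ell,k)$ let $\varphi_{\ell,k}$ be the local lift attached to some $\lambda_{\ell,k}$ with $U_{\ell,k} \subseteq V_{\lambda_{\ell,k}}$. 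Define
\[
\psi_\ell(a) := \sum_{k=1}^{N_\ell} h_{\ell,k}^{1/2}\, \varphi_{\ell,k}(a)\, h_{\ell,k}^{1/2}, \qquad a \in M_p.
\]
Centrality of $h_{\ell,k}$ and the orthogonality $h_{\ell,k}h_{\ell,k'} = 0$ for $k \neq k'$ force each $\psi_\ell$ to be c.p.c.\ order zero; one checks $\sum_\ell \psi_\ell(1) = \sum_{\ell,k} h_{\ell,k} = 1$; and $\beta_\omega$-invariance follows because $\sigma = \id$ makes $C(K)$ pointwise fixed while each $\varphi_{\ell,k}(a)$ is $\beta_\omega$-fixed over $\supp h_{\ell,k}$.

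The principal obstacle is guaranteeing commuting ranges across distinct colors. I plan to handle this by producing the families $\{\varphi_{\ell,k}\}_k$ inductively in $\ell$ via a diagonal reindexing through $\omega$, each step forcing the new lifts to lie in the relative commutant of all previously produced matrix units. This is feasible because Ocneanu's fiberwise result remains available after passing to the relative commutant of $\hr$ with any finite-dimensional subalgebra, and separability of $\hm$ together with standard properties of the ultrafilter makes the reindexing valid. Centrality of the partition of unity then transports commuting ranges from the $\varphi_{\ell,k}$'s to the $\psi_\ell$'s, completing the construction.
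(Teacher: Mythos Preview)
Your structural outline mirrors the paper's proof closely (fiberwise Ocneanu, local lifts to neighborhoods, an $(m+1)$-colored refinement by dimension theory, partition-of-unity assembly, and an inductive step to force commuting ranges). The difference is that the paper carries out all of these steps with \emph{approximate} relations in $\hm$, tracked in the uniform $2$-norm, and only passes to the ultrapower $\hm^\omega$ at the very end; you instead try to work with \emph{exact} relations in $\hm^\omega$ from the start. That shortcut is where the argument breaks.

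Concretely, the second step does not go through as written. Lifting Ocneanu's matrix units from $\pi_\lambda(\hm)^\omega\cap\pi_\lambda(\hm)'$ along the fiber quotient $\hm^\omega\to\pi_\lambda(\hm)^\omega$ gives elements that satisfy the matrix-unit relations only at the single fiber $\lambda$; it does not produce a unital $*$-homomorphism $\varphi_\lambda:M_p\to\hm^\omega\cap\hm'$. (Obtaining such a global $*$-homomorphism already amounts to a bundle McDuff statement and is essentially equivalent in difficulty to what you are trying to prove.) The phrase ``compression by the central projection in $C(K^\omega)$ corresponding to $V_\lambda$'' is also not well-defined: $C(K^\omega)=\prod_\omega C(K)$ contains no projection associated to an open, non-clopen $V_\lambda\subseteq K$, so there is nothing to compress by. Without genuine $*$-homomorphisms $\varphi_{\ell,k}$, the claimed order-zero property of $\psi_\ell$ does not follow from orthogonality of the $h_{\ell,k}$ alone, and the claimed exact $\beta_\omega$-invariance over $\supp h_{\ell,k}$ is likewise unavailable.

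The paper repairs this by never demanding exact relations locally. Lemma~\ref{lem:fixpointnbd} produces u.c.p.\ maps $\varphi:M_p\to\hm$ that are $\varepsilon$-multiplicative, $\varepsilon$-central and $\varepsilon$-$\beta$-fixed on a neighborhood of $\lambda$ (in $\|\cdot\|_{2,\mu}$); Lemma~\ref{lem:fixpointcoversystem} runs your intended induction at this approximate level; Lemma~\ref{lem:fixpointcpc} assembles via the colored partition of unity, yielding approximate identities such as $\|\psi^{(\ell)}(1)\psi^{(\ell)}(e^*e)-\psi^{(\ell)}(e)^*\psi^{(\ell)}(e)\|_{2,u}<\varepsilon$. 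Only then does one pass to $\hm^\omega$, where these become exact, and Lemma~\ref{lem:orderzero} converts the identity $\psi_\ell(1)\psi_\ell(e^*e)=\psi_\ell(e)^*\psi_\ell(e)$ into the order-zero conclusion. Your outline can be fixed by downgrading every exact claim in step two to an $\varepsilon$-version in $\hm$ and postponing the ultrapower; at that point it becomes the paper's proof.
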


We shall construct the order zero maps $\psi_1,\psi_2,...,\psi_{m+1}$ in a way very similar to \cite{KirRo01}. Since the proof is quite lengthy, we divide it into several lemmas.

\begin{lem}
\label{lem:fixpointnbd}
Under the same assumptions as Proposition \ref{prop:fixpointalgebra}, for every $\lambda\in K, \e > 0, p\in \N$, and norm bounded $\|\cdot\|_{2,u}$-compact subset $\Omega\subseteq \hm$, there exists a neighborhood $O_\lambda$ of $\lambda$ and a unital completely positive (u.c.p.) map $\varphi:M_p \to \hm$ such that
\begin{enumerate}
\item[(i)] $\sup_{\mu\in O_\lambda}\| [\varphi(e), x]\|_{2,\mu} < \e$\;\;\;\;\;\; $(e\in (M_p)_1,\; x\in \Omega)$;
\item[(ii)] $\sup_{\mu\in O_\lambda}\| \varphi(e^*e) - \varphi(e)^*\varphi(e) \|_{2,\mu} < \e$\;\;\;\;\;\; $(e\in (M_p)_1)$;
\item[(iii)] $\sup_{\mu\in O_\lambda}\| \beta(\varphi(e)) - \varphi(e) \|_{2,\mu} < \e$\;\;\;\;\;\; $(e\in (M_p)_1)$.
\end{enumerate}
\end{lem}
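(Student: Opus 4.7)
The plan is to establish the conclusion at the single point $\lambda$ and then promote it to a neighborhood using Proposition \ref{prop:2normcontinuity}. At the fiber $N := \pi_\lambda(\hm) \cong \hr$, the fiber automorphism $\beta_\lambda$ is outer by hypothesis, so by Ocneanu's theorem \cite{Oc01} (equivalently, Connes' non-commutative Rokhlin theorem applied to a single outer automorphism of $\hr$) the fixed-point subalgebra of $\beta_\lambda$ inside the tracial central sequence algebra $N_\omega \cap N'$ contains $M_p$ unitally. Reindexing along a sequence realizing such an embedding yields a unital $\ast$-homomorphism $\tilde\varphi : M_p \to N$ with
$$
\|[\tilde\varphi(e), y]\|_{2,\lambda} < \e/3 \qquad \text{and} \qquad \|\beta_\lambda(\tilde\varphi(e)) - \tilde\varphi(e)\|_{2,\lambda} < \e/3
$$
for every $e \in (M_p)_1$ and every $y$ in the $2$-norm compact set $\pi_\lambda(\Omega) \subseteq N$.

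Next, I would lift $\tilde\varphi$ to a u.c.p.\ map $\varphi : M_p \to \hm$. Kaplansky density provides contractions in the image $\pi_\lambda(\hm) \subseteq N$ approximating each $\tilde\varphi(f_{ij})$ in $\|\cdot\|_{2,\lambda}$, and these lift to contractions in $\hm$ via the surjective $\ast$-homomorphism $\hm \to \pi_\lambda(\hm)$. One then assembles them into a u.c.p.\ map, for instance by first approximating $\tilde\varphi$ by a u.c.p.\ map into $\pi_\lambda(\hm)$ and then applying the Choi--Effros lifting theorem (valid because $M_p$ is nuclear). Using $\pi_\lambda \circ \beta = \beta_\lambda \circ \pi_\lambda$ (since $\sigma = \id$) and the fact that $\tilde\varphi$ is a genuine $\ast$-homomorphism --- so the Kadison--Schwarz defect $\tilde\varphi(e^*e) - \tilde\varphi(e)^*\tilde\varphi(e)$ vanishes exactly at the fiber $\lambda$ --- all three required bounds hold at that single fiber with error strictly less than $\e$.

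For each fixed pair $(e,x) \in (M_p)_1 \times \Omega$, the elements $[\varphi(e), x]$, $\varphi(e^*e) - \varphi(e)^*\varphi(e)$, and $\beta(\varphi(e)) - \varphi(e)$ are fixed members of $\hm$, so their fiberwise $2$-norms vary continuously in the base parameter by Proposition \ref{prop:2normcontinuity}. Norm-compactness of $(M_p)_1$, $\|\cdot\|_{2,u}$-compactness of $\Omega$, and joint continuity of the three expressions in $(e,x)$ then allow a standard finite-net argument to produce a single neighborhood $O_\lambda$ on which all three suprema remain below $\e$. The principal obstacle is the fiberwise step: combining approximate centrality on $\pi_\lambda(\Omega)$ with approximate $\beta_\lambda$-invariance inside a single fiber requires Ocneanu/Connes in an essential way, and is the only point at which outerness of $\beta_\lambda$ enters the argument; the subsequent lifting from $N$ back to $\hm$ and the passage to a neighborhood are routine in comparison.
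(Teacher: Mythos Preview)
Your proposal is correct and follows essentially the same route as the paper: invoke Ocneanu at the fiber $\pi_\lambda(\hm)\cong\hr$ to obtain an approximately central, approximately $\beta_\lambda$-invariant copy of $M_p$, lift to a u.c.p.\ map $\varphi:M_p\to\hm$ via Choi--Effros along the surjection $\pi_\lambda$, and then use Proposition~\ref{prop:2normcontinuity} together with compactness of $(M_p)_1\times\Omega$ to pass from the single point $\lambda$ to a neighborhood. The only cosmetic difference is that you insist on arranging $\tilde\varphi$ to be an honest $\ast$-homomorphism at the fiber (so that condition~(ii) holds exactly at $\lambda$), whereas the paper simply takes $\tilde\varphi$ to be one of the u.c.p.\ lifts $\varphi_n$ of the embedding $M_p\hookrightarrow (N^\omega\cap N')^{\beta_{\lambda,\omega}}$ and gets (ii) approximately from $\lim_{n\to\omega}\|\varphi_n(e^*e)-\varphi_n(e)^*\varphi_n(e)\|_{2,\lambda}=0$; your Kaplansky-density remark is also unnecessary since $\pi_\lambda(\hm)$ already equals the fiber von Neumann algebra.
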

\begin{proof}
By \cite{Oc01} (cf. \cite[Lemma 4.1]{MaSa03}), $\left( \pi_\lambda(\hm)^\omega\cap \pi_\lambda(\hm)'  \right)^{\beta_{\lambda,\omega}}$ contains a unital copy of $M_p$. Therefore there exists a sequence $(\varphi_n)_{n=1}^\infty$ of u.c.p. maps $\varphi_n:M_p\to \pi_\lambda(\hm)$ such that
\begin{enumerate}
\item[(a)] $\lim_{n\to\omega} \left\| [\varphi_n(e), \pi_\lambda(x)] \right\|_{2,\lambda}   = 0\;\;\;\;\;\;\;\; (e\in M_p; x\in \Omega)$;
\item[(b)] $\lim_{n\to\omega}  \left\| \varphi_n(e^*e) - \varphi(e)^*\varphi(e) \right\|_{2,\lambda}   = 0\;\;\;\;\;\;\;\; (e\in M_p)$;
\item[(c)] $\lim_{n\to\omega}  \left\| \beta_\lambda(\varphi_n(e)) - \varphi_n(e) \right\|_{2,\lambda}  = 0\;\;\;\;\;\;\;\ (e\in M_p)$.
\end{enumerate}

Since $(M_p)_1$ and $\Omega$ are compact, choosing $n$ large enough along the ultrafilter $\omega$, and lifting along the quotient map $\pi_\lambda:\hm\to \pi_\lambda(\hm)$, we obtain a u.c.p. map $\varphi:M_p\to \hm$ satisfying
\begin{enumerate}
\item[(a)'] $\sup_{e\in (M_p)_1, x\in \Omega} \| [\varphi(e), x] \|_{2,\lambda} < \e$;
\item[(b)'] $\sup_{e\in (M_p)_1} \| [\varphi(e^*e)- \varphi(e)^*\varphi(e) \|_{2,\lambda} < \e$;
\item[(c)'] $\sup_{e\in (M_p)_1} \| \beta(\varphi(e)) - \varphi(e) \|_{2,\lambda} < \e  $.
\end{enumerate}
By the 2-norm continuity (Proposition \ref{prop:2normcontinuity}) of continuous $W^*$-bundles, the map
$$
(M_p)_1\times \Omega\times K\to \R,\;\;\;\;\;\; (e,x,\lambda)\mapsto \| [\varphi(e),x]\|_{2,\lambda}
$$
is continuous. Since $(M_p)_1\times \Omega\times K$ is compact, the estimate (a)' holds for some small neighborhood $O_1$ of $\lambda$. Similarly we can find neighborhoods $O_2$ and $O_3$ of $\lambda$ such that the estimates (b)' and (c)' hold, respectively. Now the proof is finished by taking $O_\lambda := O_1\cap O_2\cap O_3$.
\end{proof}

The following lemma is essentially the same as \cite[Proposition 7.4]{KirRo01}.

\begin{lem}
\label{lem:fixpointcoversystem}
Under the same assumptions as Proposition \ref{prop:fixpointalgebra}, for every $\e> 0, p\in \N, m\in \N$, and norm bounded $\|\cdot\|_{2,u}$-compact subset $\Omega\subseteq \hm$, there exists a system\\ $\{ \mathcal{U}; \varPhi^{(1)},...,\varPhi^{(m)} \}$ consisting of
\begin{itemize}
\item an open covering $\mathcal{U}$ of $K$, and
\item finite collections $\varPhi^{(\ell)} = \{ \varphi_1^{(\ell)},...,\varphi_{h(\ell)}^{(\ell)} \}$ of u.c.p. maps $\varphi_j^{(\ell)}:M_p\to \hm$
\end{itemize}
satisfying the following condition: for each open set $O\in \mathcal{U}$ and each $\ell\in \{1,...,m\}$ there exists $j\in \{1,...,h(\ell)\}$ such that
\begin{enumerate}
\item[(i)] $\sup_{\mu\in O} \| [\varphi_j^{(\ell)}(e), x ] \|_{2,\mu} < \e$\;\;\;\;\;\; $(e\in (M_p)_1,\; x\in \hm)$;
\item[(ii)] $\sup_{\mu\in O} \| [ \varphi_j^{(\ell)}(e), \varphi_i^{(k)}(f) ]  \|_{2,\mu} <\e$\;\;\;\;\;\; $(1\leq k\neq \ell\leq m,\; 1\leq i\leq h(k),\; e,f\in (M_p)_1)$;
\item[(iii)] $\sup_{\mu\in O} \| \varphi_j^{(\ell)}(e^*e) - \varphi_j^{(\ell)}(e)^*\varphi_j^{(\ell)}(e)  \|_{2,\mu} < \e$\;\;\;\;\;\; $(e,f\in (M_p)_1)$;
\item[(iv)] $\sup_{\mu\in O} \|  \beta( \varphi_j^{(\ell)}(e) ) - \varphi_j^{(\ell)}(e)  \|_{2,\mu} < \e$\;\;\;\;\;\; $(e\in (M_p)_1)$.
\end{enumerate}
\end{lem}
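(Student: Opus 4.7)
The plan is to build the system $\{\mathcal{U};\varPhi^{(1)},\ldots,\varPhi^{(m)}\}$ by induction on $\ell$, using Lemma \ref{lem:fixpointnbd} as the engine together with a compactness argument to pass from local data to a finite cover. At each stage I will construct a finite open cover $\mathcal{U}_\ell$ of $K$ together with the corresponding collection $\varPhi^{(\ell)}$, and at the end take a common refinement of $\mathcal{U}_1,\ldots,\mathcal{U}_m$ to obtain $\mathcal{U}$. The induction is needed because condition (ii) of the conclusion requires that $\varphi_j^{(\ell)}(e)$ commute in the local $\|\cdot\|_{2,\mu}$-sense with every map chosen in every other collection; I will arrange this by absorbing the images of the maps from earlier steps into the compact set fed to Lemma \ref{lem:fixpointnbd}.

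At step $\ell$, I put
$$
\Omega_\ell \;:=\; \Omega \,\cup\, \bigcup_{k=1}^{\ell-1}\bigcup_{j=1}^{h(k)} \varphi_j^{(k)}\bigl((M_p)_1\bigr).
$$
Each $\varphi_j^{(k)}$ is u.c.p., hence operator-norm contractive, and therefore $\|\cdot\|_{2,u}$-continuous from the operator-norm compact set $(M_p)_1$ into $\hm$. Consequently every $\varphi_j^{(k)}\bigl((M_p)_1\bigr)$ is norm-bounded and $\|\cdot\|_{2,u}$-compact, so $\Omega_\ell$ remains a norm-bounded $\|\cdot\|_{2,u}$-compact subset of $\hm$. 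Applying Lemma \ref{lem:fixpointnbd} (for the given $p$ and $\e$, but with $\Omega$ replaced by $\Omega_\ell$) at each $\lambda\in K$ produces a neighborhood $O_\lambda$ of $\lambda$ and a u.c.p. map $\varphi_\lambda\colon M_p\to\hm$ satisfying (i)--(iii) of that lemma on $O_\lambda$ with respect to $\Omega_\ell$. Compactness of $K$ lets me extract a finite subcover; I relabel its elements $O_1^{(\ell)},\ldots,O_{h(\ell)}^{(\ell)}$ with associated u.c.p. maps $\varphi_1^{(\ell)},\ldots,\varphi_{h(\ell)}^{(\ell)}$, and set $\mathcal{U}_\ell:=\{O_1^{(\ell)},\ldots,O_{h(\ell)}^{(\ell)}\}$.

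Having completed all $m$ steps, I define $\mathcal{U}$ to be the family of nonempty intersections $O_{j_1}^{(1)}\cap\cdots\cap O_{j_m}^{(m)}$, which is an open cover of $K$ refining every $\mathcal{U}_\ell$. Given $O\in\mathcal{U}$ and $\ell\in\{1,\ldots,m\}$, pick the index $j=j(O,\ell)\in\{1,\ldots,h(\ell)\}$ with $O\subseteq O_j^{(\ell)}$. Conclusions (iii) and (iv) of the lemma are then exactly (ii) and (iii) of Lemma \ref{lem:fixpointnbd}, applied at step $\ell$; conclusion (i) follows from Lemma \ref{lem:fixpointnbd}(i) at step $\ell$ together with the inclusion $\Omega\subseteq\Omega_\ell$. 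For conclusion (ii), when $k<\ell$ it is again Lemma \ref{lem:fixpointnbd}(i) at step $\ell$, since $\varphi_i^{(k)}(f)\in\Omega_\ell$; when $k>\ell$ it is the same estimate taken at step $k$ with the roles of $k$ and $\ell$ exchanged, since then $\varphi_j^{(\ell)}(e)\in\Omega_k$.

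The main obstacle is really just bookkeeping: one has to verify that each $\varphi_j^{(k)}\bigl((M_p)_1\bigr)$ is $\|\cdot\|_{2,u}$-compact so that Lemma \ref{lem:fixpointnbd} remains applicable on the enlarged sets $\Omega_\ell$, and to index the common refinement consistently so that condition (ii) is captured both for $k<\ell$ and $k>\ell$. No new analytic input is required beyond Lemma \ref{lem:fixpointnbd}; the argument is the continuous-bundle analogue of the tracial patching used in \cite[Proposition 7.4]{KirRo01}.
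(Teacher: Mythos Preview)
Your proof is correct and follows essentially the same approach as the paper's: induction on the number of collections, at each step enlarging $\Omega$ to include the images $\varphi_j^{(k)}((M_p)_1)$ of the previously constructed maps, applying Lemma~\ref{lem:fixpointnbd} together with compactness of $K$, and then passing to a common refinement of the resulting covers. The only organizational difference is that the paper refines the cover at each inductive step rather than once at the end, which is immaterial.
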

\begin{proof}
We proceed by induction on $m$. First consider the case that $m=1$. By Lemma \ref{lem:fixpointnbd}, for each $\lambda\in K$, there exists a u.c.p. map $\varphi_\lambda:M_p\to \hm$ and a neighborhood $O_\lambda$ of $\lambda$ such that the condition (i)(iii)(iv) hold. Cover $K$ by $O_\lambda$ and extract a finite subcover $\mathcal{U} = \bigcup_{j=1}^{h(1)} O_{\lambda_j} $. Then $\{ \mathcal{U}; \varPhi^{(1)} = \{ \varphi_{\lambda_1},...,\varphi_{\lambda_{h(1)}}  \} \}$ form a required system.

Now suppose we have constructed the system $\{ \mathcal{U}'; \varPhi^{(1)},...,\varPhi^{(m-1)} \}$. By Lemma \ref{lem:fixpointnbd} for each $\lambda\in K$ there exists a u.c.p map $\varphi_\lambda^{(m)}:M_p\to \hm$ and a neighborhood $O_\lambda$ satisfying (i)(iii)(iv) with $\Omega$ replaced by $\Omega \cup \left(  \bigcup_{1\leq \ell\leq m-1,\; 1\leq j\leq h(\ell)}  \varphi_j^{(\ell)}( (M_p)_1 )  \right)$. Cover $K$ by $O_\lambda$ and extract a finite subcover $\mathcal{U}'' = \bigcup_{j=1}^{h(m)}O_{\lambda_j}$. Let $\mathcal{U}$ be the collection of all subsets of the form $O'\cap O''$, where $O'\in \mathcal{U}'$ and $O''\in \mathcal{U}''$, and let $\varPhi^{(m)} = \{ \varphi_1^{(m)},..., \varphi_{h(m)}^{(m)}  \}$. Then $\{  \mathcal{U}; \varPhi^{(1)},...,\varPhi^{(m)} \}$ form a required system.
\end{proof}

\begin{rem} The assumption on the topological covering dimension of $K$ is unnecessary in Lemma \ref{lem:fixpointnbd} and Lemma \ref{lem:fixpointcoversystem}.
\end{rem}

\begin{lem}
\label{lem:fixpointcpc}
Under the same assumptions as Proposition \ref{prop:fixpointalgebra}, for every $\e > 0, p\in \N$, and norm bounded $\|\cdot\|_{2,u}$-compact subset $\Omega\subseteq \hm$, there exist c.p.c. maps $\psi^{(1)},...,\psi^{(m+1)}:M_p\to \hm$ (recall that $m = \dim(K)$) such that
\begin{enumerate}
\item[(i)] $\|[\psi^{(\ell)}(e),x]\|_{2,u} < \e$\;\;\;\;\;\; $(1\leq \ell\leq m+1,\; e\in (M_p)_1,\; x\in \Omega)$;
\item[(ii)] $\| [\psi^{(\ell)}(e), \psi^{(k)}(f) ] \|_{2,u} < \e$\;\;\;\;\;\; $(1\leq \ell\neq k \leq m+1,\;e, f\in (M_p)_1)$;
\item[(iii)] $\| \psi^{(\ell)}(1)\psi^{(\ell)}(e^*e) - \psi^{(\ell)}(e)^*\psi^{(\ell)}(e) \|_{2,u} < \e$\;\;\;\;\;\; $(1\leq \ell\leq m+1,\; e\in (M_p)_1)$;
\item[(iv)] $\sum_{\ell=1}^{m+1}\psi^{(\ell)}(1)  = 1$;
\item[(v)] $\| \beta(\psi^{(\ell)}(e))-\psi^{(\ell)}(e)\|_{2,u} < \e$\;\;\;\;\;\; $(1\leq \ell\leq m+1,\; e\in (M_p)_1)$.
\end{enumerate}
\end{lem}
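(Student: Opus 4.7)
My plan is to combine the fiberwise data produced by Lemma~\ref{lem:fixpointcoversystem} with a standard partition-of-unity argument, using the finite topological covering dimension of $K$ to enforce a coloring structure on the open cover. The crucial topological input is Ostrand's characterization of covering dimension: because $\dim(K) = m$, every open cover of $K$ admits an open refinement of the form $\mathcal{V} = \mathcal{V}_1 \sqcup \cdots \sqcup \mathcal{V}_{m+1}$, where each family $\mathcal{V}_\ell$ consists of pairwise disjoint open sets. This matches exactly the $m+1$ maps $\psi^{(1)}, \ldots, \psi^{(m+1)}$ that need to be constructed.

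Concretely, I would first apply Lemma~\ref{lem:fixpointcoversystem} with the number of families set to $m+1$ (and a suitably small $\e' > 0$, to be fixed at the end) to obtain an open cover $\mathcal{U}$ of $K$ together with finite collections $\varPhi^{(\ell)} = \{\varphi_1^{(\ell)}, \ldots, \varphi_{h(\ell)}^{(\ell)}\}$ of u.c.p.\ maps $M_p \to \hm$, for each $\ell \in \{1, \ldots, m+1\}$. Next I would invoke Ostrand's theorem to refine $\mathcal{U}$ to $\mathcal{V} = \mathcal{V}_1 \sqcup \cdots \sqcup \mathcal{V}_{m+1}$ with each $\mathcal{V}_\ell$ pairwise disjoint, fix for every $V \in \mathcal{V}$ a set $O(V) \in \mathcal{U}$ with $V \subseteq O(V)$, and for each $V \in \mathcal{V}_\ell$ select a matching index $j(V) \in \{1, \ldots, h(\ell)\}$ as supplied by Lemma~\ref{lem:fixpointcoversystem}. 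Finally I would pick a partition of unity $(g_V)_{V \in \mathcal{V}}$ in $C(K)$ subordinate to $\mathcal{V}$ (with $\supp g_V \subseteq V$), regard each $g_V$ as a central element of $\hm$ via $C(K) \subseteq Z(\hm)$, and set
$$
\psi^{(\ell)}(e) := \sum_{V \in \mathcal{V}_\ell} g_V\, \varphi_{j(V)}^{(\ell)}(e), \qquad e \in M_p, \ \ell = 1, \ldots, m+1.
$$
Because the $g_V$ for $V \in \mathcal{V}_\ell$ have pairwise disjoint supports and are central, $\psi^{(\ell)}$ is visibly a c.p.c.\ map.

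To verify (i)--(v), one exploits the disjointness and centrality of the $g_V$'s to reduce everything to the fiberwise estimates of Lemma~\ref{lem:fixpointcoversystem}. Property (iv) is immediate, since each $\varphi_{j(V)}^{(\ell)}$ is unital and $\sum_V g_V = 1$. For (i), (iii), and (v), a short computation shows that at each $\mu \in K$ only one $V_0 \in \mathcal{V}_\ell$ with $\mu \in V_0$ contributes, after which the corresponding fiberwise bound from Lemma~\ref{lem:fixpointcoversystem}(i), (iii), (iv) applies on $O(V_0) \supseteq V_0$; here one uses that $g_V$ is $\beta$-invariant because $\beta$ acts trivially on $C(K)$, which is what makes (v) go through. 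Property (ii) similarly reduces, at each $\mu$, to the pair $(V_0, W_0) \in \mathcal{V}_\ell \times \mathcal{V}_k$ containing $\mu$, with the required estimate $\|[\varphi_{j(V_0)}^{(\ell)}(e), \varphi_{j(W_0)}^{(k)}(f)]\|_{2,\mu} < \e'$ coming from Lemma~\ref{lem:fixpointcoversystem}(ii) applied on the open set $O(V_0)$.

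The main subtlety, and the place where the design of Lemma~\ref{lem:fixpointcoversystem} matters most, is property (ii). If that lemma only guaranteed commutation between a single preferred pair $\varphi_{j_\ell(O)}^{(\ell)}$ and $\varphi_{j_k(O)}^{(k)}$ chosen over a common $O \in \mathcal{U}$, then the gluing would force $O(V_0) = O(W_0)$ for every cross-color overlapping pair, which an arbitrary refinement need not provide. The fact that Lemma~\ref{lem:fixpointcoversystem}(ii) supplies commutation of $\varphi_{j_\ell(O)}^{(\ell)}$ with \emph{every} $\varphi_i^{(k)}$, $i \in \{1, \ldots, h(k)\}$, decouples the choice of $j(V_0)$ from the choice of $j(W_0)$ and is exactly what makes the partition-of-unity gluing work. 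Everything else is routine bookkeeping with $\|\cdot\|_{2,u}$, provided $\e'$ is taken small enough at the outset (e.g.\ $\e' < \e$).
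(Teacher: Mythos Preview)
Your proposal is correct and follows essentially the same route as the paper: apply Lemma~\ref{lem:fixpointcoversystem} with $m+1$ families, refine the cover to an $(m+1)$-colored refinement with pairwise disjoint sets in each color, choose a subordinate partition of unity, and define $\psi^{(\ell)}$ by summing $g_V\,\varphi_{j(V)}^{(\ell)}$ over $V\in\mathcal{V}_\ell$ (the paper writes $g_V^{1/2}\varphi_{j(V)}^{(\ell)}(\cdot)g_V^{1/2}$, which is the same thing since $g_V\in Z(\hm)$). Your discussion of why Lemma~\ref{lem:fixpointcoversystem}(ii) must give commutation of $\varphi_{j}^{(\ell)}$ with \emph{every} $\varphi_i^{(k)}$ is exactly the point that makes the verification of (ii) go through, and the remaining estimates (i), (iii)--(v) localize at each $\mu\in K$ precisely as you describe.
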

\begin{proof}
By Lemma \ref{lem:fixpointcoversystem}, we can find a system $(\mathcal{U}; \varPhi^{(1)},...,\varPhi^{(m+1)})$ satisfying the condition specified in the same lemma. Since $\dim(K) = m$, there is a $(m+1)$-decomposable open refinement $\mathcal{U}' = \mathcal{U}_1\cup \cdots \cup \mathcal{U}_{m+1}$ of $\mathcal{U}$, where each $\mathcal{U}_\ell$ is a finite collection of disjoint open sets $(O_1^{(\ell)},...,O_{s(\ell)}^{(\ell)})$. By construction, for each $\ell\in \{1,2,...,m+1\}$ and $j\in \{1,2,...,s(\ell)\}$ there exists a u.c.p. map $\varphi_j^{(\ell)}:M_p\to \hm$ from the collection $\varPhi^{(\ell)}$ such that (i)-(iv) in Lemma \ref{lem:fixpointcoversystem} hold.

Let $\{  g_j^{(\ell)}: \ell=1,2,...,m+1, j= 1,2,..., s(\ell)\}$ be a partition of unity subordinate to the open cover $\bigcup_{\ell,j} O_j^{(\ell)}$. Note that each $g_j^{(\ell)}$ is fixed by the automorphism $\beta$ of $\hm$ since we assume the action on $K$ is trivial. For each $\ell$ define the c.p.c map $\psi^{(\ell)}:M_p\to \hm$ by
$$
\psi^{(\ell)} = \sum_{j=1}^{s(\ell)} g_j^{(\ell)^{\frac{1}{2} }} \varphi_j^{(\ell)}(\cdot) g_j^{(\ell)^{\frac{1}{2} }}.
$$
We claim that $\{\psi^{(\ell)}\}_{1\leq \ell\leq m+1}$ satisfy conditions (i)-(v). Note that since each $\varphi_j^{(\ell)}$ is unital, $\varphi^{(\ell)}(1) = \sum_{j=1}^{s(\ell)} g_j^{(\ell)}$ and (iv) follows immediately. For the other estimates we shall only verify (i) since the arguments are similar. 

Given $e\in (M_p)_1$, $x\in \Omega$, and $\ell\in \{1,...,m+1\}$, we write
\begin{align*}
[\psi^{(\ell)}(e),x] &= \left[ \sum_{j=1}^{s(\ell)} g_j^{(\ell)^{\frac{1}{2}}} \varphi_j^{(\ell)}(e) g_j^{(\ell)^{\frac{1}{2} }}, x  \right] \\
&= \sum_{j=1}^{s(\ell)} g_j^{(\ell)^{\frac{1}{2} }} [ \varphi_j^{(\ell)}(e), x] g_j^{(\ell)^{\frac{1}{2} }}.
\end{align*}
Since $g_j^{(\ell)}g_i^{(\ell)} = 0$ whenever $i\neq j$, we have
$$
[\psi^{(\ell)}(e), x ]^*[\psi^{(\ell)}(e), x ] =  \sum_{j=1}^{s(\ell)} g_j^{(\ell)^{\frac{1}{2}}} [\varphi_j^{(\ell)}(e), x ]^*  g_j^{(\ell)}  [\varphi_j^{(\ell)}(e), x ]  g_j^{(\ell)^{\frac{1}{2}}}.
$$
By definition of the uniform 2-norm, we need to estimate the term
$$
\sup_{\lambda\in K} \tau_\lambda \left(  [\psi^{(\ell)}(e), x ]^*[\psi^{(\ell)}(e), x ] \right).
$$
If we think of $K$ as the union of the families $\U_1,\U_2,...,\U_{m+1}$, then the supremum can be taken only over $\U_\ell = \bigcup_{k=1}^{s(\ell)}O_k^{(\ell)}$. However, on each subset $O_k^{(\ell)}$ only the term with $g_k^{(\ell)}$ survives as $\U_\ell$ consists of disjoint subsets. Therefore we have
\begin{align*}
\| [\psi^{(\ell)}(e),x] \|_{2,u}^2 &= \sup_{1\leq k\leq s(\ell), \lambda\in O_k^{(\ell)}} \tau_\lambda \left( g_k^{(\ell)^{\frac{1}{2}}} [\varphi_k^{(\ell)}(e), x ]^*  g_k^{(\ell)}  [\varphi_k^{(\ell)}(e), x ]  g_k^{(\ell)^{\frac{1}{2}}}  \right) \\
&\leq \sup_{1\leq k\leq s(\ell)}\left(  \sup_{\lambda\in O_k^{(\ell)}} \left\|   [\varphi_k^{(\ell)}(e), x   ]  \right\|_{2,\lambda}^2   \right) \\
&< \e^2.
\end{align*}
\end{proof}

The following lemma gives a sufficient condition for a c.p.c map to be order zero. This is known among experts (cf. \cite[proof of Proposition 7.7]{KirRo01}), but for the reader's convenience we include a proof here. The argument is due to Aleksey Zelenberg. 

\begin{lem}\label{lem:orderzero}
Let $A$ and $B$ be unital $C^*$-algebras. Suppose $\varphi:A\to B$ is a c.p.c. map such that
$$
\varphi(1_A)\varphi(x^*x) = \varphi(x)^*\varphi(x)
$$
for all $x\in A$. Then $\varphi$ is an order zero map.
\end{lem}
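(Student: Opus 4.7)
The plan is to extract enough linear identities from the hypothesis, by testing it on several carefully chosen inputs, to show directly that $\varphi(a)\varphi(b)=0$ for positive orthogonal $a,b$. Let $a,b\in A$ be positive with $ab=0$. Since $ba=(ab)^*=0$ as well, the elements $a$ and $b$ commute, so
\begin{align*}
(a+b)^*(a+b) &= a^2+b^2, \\
(a+ib)^*(a+ib) &= a^2+b^2.
\end{align*}
Also $\varphi(a)$ and $\varphi(b)$ are self-adjoint because $\varphi$ is positive.

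First I would apply the hypothesis to $x=a$ and to $x=b$ separately and add, obtaining
$$
\varphi(1_A)\varphi(a^2+b^2) \;=\; \varphi(a)^2+\varphi(b)^2.
$$
Next I would apply the hypothesis to $x=a+b$. The left-hand side is again $\varphi(1_A)\varphi(a^2+b^2)$, while by linearity the right-hand side expands to
$$
\varphi(a)^2+\varphi(a)\varphi(b)+\varphi(b)\varphi(a)+\varphi(b)^2.
$$
Comparing with the previous display yields the anticommutation relation $\varphi(a)\varphi(b)+\varphi(b)\varphi(a)=0$.

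Finally I would apply the hypothesis to $x=a+ib$. The left-hand side remains $\varphi(1_A)\varphi(a^2+b^2)$, while
$$
\varphi(a+ib)^*\varphi(a+ib) \;=\; \varphi(a)^2+i\varphi(a)\varphi(b)-i\varphi(b)\varphi(a)+\varphi(b)^2.
$$
This yields $i\varphi(a)\varphi(b)-i\varphi(b)\varphi(a)=0$, i.e.\ the commutation relation $\varphi(a)\varphi(b)=\varphi(b)\varphi(a)$. Adding this to the anticommutation relation gives $2\varphi(a)\varphi(b)=0$, so $\varphi(a)\varphi(b)=0$, proving that $\varphi$ is order zero. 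There is no real obstacle: the proof is a short polarization argument, and the only substantive observation is that both the real combination $a+b$ and the complex combination $a+ib$ are required in order to isolate the anticommutation and commutation relations separately.
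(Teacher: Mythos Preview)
Your proof is correct. The first half---applying the hypothesis to $x=a+b$ to obtain the anticommutation relation $\varphi(a)\varphi(b)+\varphi(b)\varphi(a)=0$---is identical to the paper's argument. The difference lies in how you extract the commutation relation. The paper argues purely algebraically from anticommutation: writing $c=\varphi(a)$ and $d=\varphi(b)$, the identity $cd=-dc$ gives $cd^2=-dcd=d^2c$, so $c$ commutes with $d^2$ and hence (since $d\ge 0$ lies in the $C^*$-algebra generated by $d^2$) with $d$ itself. You instead run the polarization once more with the complex input $x=a+ib$, which directly yields $\varphi(a)\varphi(b)-\varphi(b)\varphi(a)=0$. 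Your route is arguably more self-contained, avoiding the small functional-calculus step; the paper's route has the mild advantage of needing the hypothesis only for self-adjoint inputs. Either way the argument is a two-line polarization, and both versions are equally valid.
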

\begin{proof}
Suppose $x,y$ are orthogonal positive elements in $A$ (namely $xy=0$). We compute
\begin{align*}
\varphi(1)\varphi((x+y)^*(x+y)) &= \varphi(1)\varphi(x^*x + y^*y) \\
&= \varphi(1)\varphi(x^2)+ \varphi(1)\varphi(y^2) \\
&= \varphi(x)^2 + \varphi(y)^2.
\end{align*}
On the other hand,
\begin{align*}
\varphi(x+y)^*\varphi(x+y) &= \varphi(x)^2 + \varphi(x)\varphi(y)+ \varphi(y)\varphi(x) + \varphi(y)^2.
\end{align*}
Comparing these two expressions (they are equal by the assumption), we see that
$$
\varphi(x)\varphi(y) + \varphi(y)\varphi(x) = 0.
$$
Write $a := \varphi(x)$ and $b:=\varphi(y)$. To show that $ab = 0$, it suffices to show that $a$ and $b$ commute. From the identity above ($ab+ba = 0$) we have
$$
ab^2 = (ab)b = (-ba)b = -b(ab) = -b(-ba) = b^2a.
$$
This shows that $a$ commutes with $b^2$, which implies that $a$ commutes with $b$.
\end{proof}

Now we are ready to prove Proposition \ref{prop:fixpointorderzero}.

\begin{proof} (of Proposition \ref{prop:fixpointorderzero})
Let $\{x_1,x_2,...\}$ be a countable $\|\cdot\|_{2,u}$-dense subset of $\hm$. By Lemma \ref{lem:fixpointcpc}, for each $n\in \N$ there exist c.p.c. maps $\psi_n^{(1)}, \psi_n^{(2)},...,\psi_n^{(m+1)}:M_p\to \hm$ such that
(ii)-(v) in Lemma \ref{lem:fixpointcpc} hold with $\e = \frac{1}{n}$ and
\begin{enumerate}
\item[(i)'] $\| [\psi_n^{(\ell)}(e), x_j] \|_{2,u} < \frac{1}{n}\;\;\;\;\;\; (1\leq \ell\leq m+1,\; e\in (M_p)_1,\; 1\leq j\leq n)$.
\end{enumerate}
For each $1\leq \ell\leq m+1$ define c.p.c. maps $\psi_\ell:M_p\to \hm^{\omega}$ by
$$
\psi_\ell(e) = \pi_\omega(\psi_1^{(\ell)}(e), \psi_2^{(\ell)}(e), \psi_3^{(\ell)}(e),... ),
$$
where $\pi_\omega$ is the quotient map from $\ell^\infty(\N,\hm)$ onto $\hm^\omega$. Then by construction 
\begin{itemize}
\item the image of $\psi_\ell$ belongs to $(\hm^\omega\cap \hm')^{\beta_\omega}$;
\item $\psi_1, \psi_2,...,\psi_{m+1}$ have commuting ranges;
\item $\psi_1(1)+\cdots + \psi_{m+1}(1) =1$;
\item $\psi_\ell(1)\psi_{\ell}(e^*e) = \psi_\ell(e)^*\psi_\ell(e)$\;\;\;\;\;\; $(1\leq \ell\leq m+1,\; e\in M_p)$.
\end{itemize}
It remains to show that each $\psi_\ell$ is order zero, but this follows from the fourth bullet, according to Lemma \ref{lem:orderzero}.
\end{proof}


\section{A Rokhlin type theorem for continuous $W^*$-bundles}

In this section we present a Rokhlin type theorem for pointwise outer $\Z$-actions on a continuous $W^*$-bundle. More precisely we have the following:

\begin{thm}
\label{thm:bundleRokhlin}
Let $(\hm, K, E)$ be a separable continuous $W^*$-bundle such that $\pi_\lambda(\hm)\cong \hr$ for all $\lambda\in K$, and let $(\beta, \id)$ be an automorphism of $(\hm,K,E)$. Suppose the fiber map $\beta_\lambda\in \Aut(\pi_\lambda(\hm))$ is pointwise outer for each $\lambda\in K$ and suppose for each $p\in \N$ there is a unital *-homomorphism $\psi:M_p\to (\hm^\omega\cap \hm')^{\beta_\omega}$. Then for every $n\in \N$, there exist projections $p_1,...,p_n\in \hm^{\omega}\cap \hm'$ such that 
\begin{enumerate}
\item[(i)] $\beta_\omega(p_k) = p_{k+1} \mod n$\;\;\;\;\;\;\;\; $(1\leq k\leq n)$
\item[(ii)] $\sum_{k=1}^n p_k = 1$.
\item[(iii)] $\left\| 1-p_1 \right\|_{2,u}^{(\omega)} < 1$.
\end{enumerate}
\end{thm}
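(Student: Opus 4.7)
The plan is to globalize Connes' noncommutative Rokhlin theorem by gluing fiberwise Rokhlin systems via an \emph{almost orthogonal} noncommutative partition of unity built from the hypothesized equivariant matrix subalgebras. Observe first that (iii) follows automatically from (i) and (ii): because $\beta$ is a bundle automorphism over $\id_K$, it preserves $E$, so $\beta_\omega$ preserves $E^\omega$; since $E^\omega$ is a trace-like conditional expectation on $\hm^\omega$, $E^\omega(p_k)$ is independent of $k$, and $\sum_k p_k = 1$ forces $E^\omega(p_1) = 1/n$, whence $\|1-p_1\|_{2,u}^{(\omega)} = \sqrt{1-1/n} < 1$.

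For the fiberwise input, fix $\lambda \in K$ and apply Connes' Rokhlin theorem (in the form of Ocneanu) to the outer automorphism $\beta_\lambda$ on $\pi_\lambda(\hm) \cong \hr$ to produce cyclic Rokhlin projections of period $n$ in $\pi_\lambda(\hm)^\omega \cap \pi_\lambda(\hm)'$. Representing these as sequences and lifting through the quotient $\pi_\lambda: \hm \to \pi_\lambda(\hm)$, I use the 2-norm continuity of Proposition \ref{prop:2normcontinuity} (in the manner of Lemma \ref{lem:fixpointnbd}) to extend the fiberwise relations to a full neighborhood: for any finite $F \subseteq \hm$ and $\varepsilon > 0$, there exist a neighborhood $U_\lambda$ of $\lambda$ and elements $e_1^{(\lambda)}, \ldots, e_n^{(\lambda)} \in \hm$ satisfying mutual orthogonality, cyclic $\beta$-equivariance, summation to $1$, idempotence, self-adjointness, and $F$-centrality, all to within $\varepsilon$ in $\|\cdot\|_{2,\mu}$ uniformly over $\mu \in U_\lambda$. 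Extracting a finite subcover $U_1, \ldots, U_N$ yields local Rokhlin systems $\{e_k^{(j)}\}_{k=1}^n \subseteq \hm$.

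Choose a continuous partition of unity $g_1, \ldots, g_N \in C(K)$ subordinate to this cover. For a sufficiently large $p$, the inclusion $C(K) \subseteq Z(\hm)$ together with the hypothesized embedding $M_p \hookrightarrow (\hm^\omega \cap \hm')^{\beta_\omega}$ yields $C(K) \otimes M_p \subseteq (\hm^\omega \cap \hm')^{\beta_\omega}$. Inside this I build positive contractions $q_1, \ldots, q_N$ with $\sum_j q_j = 1$, $\tau_\mu(q_j) \approx g_j(\mu)$, and $q_j(\mu) = 0$ off $\mathrm{supp}(g_j) \subseteq U_j$ (trace-zero positive elements vanish), by continuously slicing the spectrum of a fixed ramp element in $M_p$ with equally spaced eigenvalues according to the partial sums $G_j = g_1 + \cdots + g_j$. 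The essential feature, exactly as in Ozawa's construction sketched in the introduction, is that for $p$ large the $q_j$'s become almost orthogonal in the uniform 2-norm, $\|q_j^{1/2} q_{j'}^{1/2}\|_{2,u}^{(\omega)} = O(p^{-1/2})$ for $j \neq j'$, and $q_j^2 \approx q_j$ similarly. Setting
\[
p_k := \sum_{j=1}^N q_j^{1/2}\, e_k^{(j)}\, q_j^{1/2} \in \hm^\omega,
\]
and using that $q_j \in \hm'$ commutes with every $e_k^{(j')} \in \hm$, one obtains $[p_k, x] = \sum_j q_j^{1/2}[e_k^{(j)}, x]q_j^{1/2}$ for $x \in \hm$; each summand vanishes off $U_j$ (since $q_j^{1/2}$ does) and is small on $U_j$ by centrality of $e_k^{(j)}$. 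The same commutativity collapses $p_k^2 = \sum_{j,j'} e_k^{(j)} e_k^{(j')} q_j q_{j'}$ to approximately $\sum_j (e_k^{(j)})^2 q_j \approx p_k$; analogous expansions, invoking $\beta_\omega$-invariance of the $q_j$'s and the local cyclic property of the $e_k^{(j)}$'s, give $p_k^* \approx p_k$, $\beta_\omega(p_k) \approx p_{k+1}$, and $\sum_k p_k \approx 1$, all in $\|\cdot\|_{2,u}^{(\omega)}$.

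A standard reindexing argument---lifting the $q_j$'s to representative sequences in $\hm$, varying the finite test set $F_m$ to exhaust a countable $\|\cdot\|_{2,u}$-dense subset of $\hm$, and letting the matrix size $p_m$ and tolerance $\varepsilon_m$ tend to infinity and zero respectively---upgrades these approximate relations to exact ones in $\hm^\omega$, producing projections $p_1, \ldots, p_n \in \hm^\omega \cap \hm'$ satisfying (i), (ii), and consequently (iii). The principal technical obstacle is the bookkeeping in the assembly step: the fiberwise Rokhlin defect of the $e_k^{(j)}$, the orthogonality defect $O(p^{-1/2})$ of the $q_j$'s, and the interpolation error in the noncommutative partition must all be controlled simultaneously, and it is precisely the freedom to take $p$ arbitrarily large---guaranteed by the hypothesis that $M_p$ embeds equivariantly for every $p \in \N$---that makes this balance feasible.
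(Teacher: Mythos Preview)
Your proposal is correct and follows essentially the same strategy as the paper: obtain local Rokhlin towers fiberwise via Connes' theorem, extend to neighborhoods by 2-norm continuity, and glue using an almost-orthogonal noncommutative partition of unity manufactured from the equivariant $M_p$-embeddings by Ozawa's slicing construction. The only organizational difference is that the paper first lifts the $M_p$-embedding to approximate u.c.p.\ maps $\varphi\colon M_p\to\hm$ (Lemma~\ref{lem:bundleucp}) and builds the partition elements $a_j$ inside $\hm$ (Lemma~\ref{lem:bundlepartition}) before gluing, whereas you keep the $q_j$'s in $(\hm^\omega\cap\hm')^{\beta_\omega}$ and defer the descent to $\hm$ to the final reindexing step; the estimates controlling cross terms versus diagonal terms are identical in either ordering.
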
 

Again we will divide the proof into smaller pieces. The first lemma is known to experts and follows from the uniqueness of the trace on a matrix algebra.

\begin{lem}
\label{lem:matrixtrace}
Let $A$ be a unital $C^*$-algebra. Suppose there is a unital *-homomorphism $\sigma:M_p\to A$. Then for each $a\in \sigma(M_p)$, $b\in A\cap \sigma(M_p)'$, and each trace $\tau\in T(A)$, we have $\tau(ab) = \tau(a)\tau(b)$.
\end{lem}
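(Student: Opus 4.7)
The plan is to exploit the uniqueness (up to scalar) of a tracial functional on $M_p$. The trace on $T(A)$ is understood to be normalized, so $\tau(1)=1$.

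First I would define $\phi:M_p\to \C$ by $\phi(x) = \tau(\sigma(x)b)$. This is clearly linear and continuous, and I would check it is tracial: given $x,y\in M_p$, since $\sigma$ is a *-homomorphism and $b$ commutes with the range of $\sigma$,
\begin{align*}
\phi(xy) &= \tau(\sigma(x)\sigma(y)b) = \tau(\sigma(x)b\sigma(y)) = \tau(\sigma(y)\sigma(x)b) = \phi(yx),
\end{align*}
using the trace property of $\tau$ in the third equality. Since every tracial functional on $M_p$ is a scalar multiple of the normalized trace $\tr_p$, we obtain $\phi = \phi(1)\cdot \tr_p$. Evaluating at $1$ gives $\phi(1) = \tau(\sigma(1)b) = \tau(b)$, so
\[
\tau(\sigma(x)b) = \tau(b)\,\tr_p(x) \qquad (x\in M_p).
\]

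Next I would apply the same uniqueness idea (or simply take $b=1$ above) to identify $\tau\circ\sigma$ with $\tr_p$: the map $x\mapsto \tau(\sigma(x))$ is a tracial functional on $M_p$ whose value at $1$ equals $\tau(1)=1$, so it coincides with $\tr_p$. Writing the given element as $a = \sigma(x)$ for some $x\in M_p$, this yields $\tau(a) = \tr_p(x)$. Substituting back,
\[
\tau(ab) = \tau(\sigma(x)b) = \tau(b)\,\tr_p(x) = \tau(a)\,\tau(b),
\]
which is the claim.

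There is essentially no obstacle here: the only subtle point is verifying that $\phi$ is genuinely tracial, which is where the commutation hypothesis $b\in A\cap \sigma(M_p)'$ enters and permits us to move $b$ past $\sigma(y)$ inside $\tau$. Everything else is the standard uniqueness of the trace on $M_p$.
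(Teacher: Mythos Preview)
Your proof is correct and follows essentially the same idea as the paper: both define a tracial map on $M_p$ via $x\mapsto \tau(\sigma(x)b)$ and invoke uniqueness of the trace on $M_p$. Your version is slightly more streamlined---by working with tracial \emph{functionals} (unique up to scalar) rather than tracial \emph{states}, you avoid the paper's reduction to $0\leq b\leq 1$ and its separate treatment of the case $\tau(b)=0$ via Cauchy--Schwarz.
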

\begin{proof}
By the standard decomposition we can reduce to the case that $0\leq b\leq 1$. If $\tau(b) = 0$, then
$$
\tau(ab) = \tau(a b^{1/2}b^{1/2}) \leq \tau(b^{1/2}a^*ab^{1/2})^{1/2} \tau(b)^{1/2} = 0.
$$
Hence the equality holds. Now assume that $\tau(b) > 0$. Note that the map
$$
M_p\to \C,\;\;\;\;\;\;\;\;\; e\mapsto \frac{1}{\tau(b)}\tau(\sigma(e)b)
$$
is a tracial state on $M_p$ because $\sigma(e)$ and $b$ commute. Since $M_p$ admits a unique trace, we have
$$
\tau \circ \sigma (e) = \frac{1}{\tau(b)} \tau( \sigma(e)b)
$$
and the lemma is proved.
\end{proof}

The first objective is to obtain a ``mutually orthogonal'' partition of unity that simulates what an ordinary partition of unity does. This is accomplished in Lemma \ref{lem:bundlepartition}, and the following lemma serves as preparation. We shall write $tr$ for the normalized trace on a matrix algebra.

\begin{lem}
\label{lem:bundleucp}
Let $(\hm, K, E)$ be a separable continuous $W^*$-bundle and let $(\beta,\sigma)$ be an automorphism of $(\hm, K, E)$. Suppose there exists a unital *-homomorphism $\psi:M_p\to (\hm^\omega\cap \hm')^{\beta_\omega}$. Then given $\e > 0$ and a norm bounded $\|\cdot\|_{2,u}$-compact subset $\Omega$ of $\hm$, there exists a u.c.p. map $\varphi:M_p\to\hm$ such that
\begin{enumerate}
\item[(i)] $\| [ \varphi(e), x] \|_{2,u} < \e \;\;\;\;\;\; (e\in (M_p)_1, x\in \Omega);$
\item[(ii)] $\| \beta(\varphi(e))-\varphi(e)\|_{2,u} < \e\;\;\;\;\;\; (e\in (M_p)_1);$
\item[(iii)] $\| \varphi(ef)-\varphi(e)\varphi(f)\|_{2,u} < \e\;\;\;\;\;\; (e,f\in (M_p)_1);$
\item[(iv)] $\|[ \varphi(e)^{1/2}, x] \|_{2,u} < \e\;\;\;\;\;\; (e\in (M_p)_1, x\in \Omega);$
\item[(v)] $\left|  \tau_\lambda(\varphi(e)x) - \tau_\lambda(\varphi(e))\tau_\lambda(x)  \right| < \e\;\;\;\;\;\; (e\in (M_p)_1, x\in \Omega, \lambda\in K)$;
\item[(vi)] $\left| \tau_\lambda(\varphi(e))-\tr(e)\right| < \e\;\;\;\;\;\; (e\in (M_p)_1, \lambda\in K)$.
\end{enumerate}
\end{lem}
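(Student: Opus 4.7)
The plan is to lift the *-homomorphism $\psi: M_p\to (\hm^\omega\cap \hm')^{\beta_\omega}$ to a sequence of u.c.p.\ maps $\varphi_n: M_p\to \hm$ and then to extract $\varphi = \varphi_n$ for a suitable $n$ along $\omega$. Such a lift exists because $M_p$ is nuclear, so any u.c.p.\ map into the quotient $\hm^\omega$ lifts to a u.c.p.\ map into $\ell^\infty(\hm)$, whose coordinate maps are themselves u.c.p.\ maps into $\hm$. Write $\pi_\omega(\varphi_1(e),\varphi_2(e),\dots) = \psi(e)$ for each $e\in M_p$.

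Conditions (i)--(iv) then follow by translating exact identities satisfied by $\psi$ into approximate identities in $\hm$. Since $\psi(M_p)\subseteq \hm^\omega\cap \hm'$, one has $[\psi(e),x] = 0$ for every $x\in \hm$, giving (i); membership in the $\beta_\omega$-fixed subalgebra gives (ii); multiplicativity of $\psi$ gives (iii); and for positive $e\in (M_p)_1$, $\psi(e)^{1/2} = \psi(e^{1/2}) \in \hm^\omega\cap\hm'$ gives (iv), using that the $1/2$-power commutes with the $C^*$-ultraproduct quotient on positive sequences. Each exact identity translates, via $\|\cdot\|_{2,u}^{(\omega)} = \lim_\omega\|\cdot\|_{2,u}$, to a vanishing $\omega$-limit of the corresponding norm in $\hm$, and a compactness argument over $(M_p)_1$ and $\Omega$ collapses these into simultaneous estimates at a single $n$.

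The core new ingredient is the uniformity in $\lambda\in K$ required in (v) and (vi). By Lemma \ref{lem:matrixtrace} applied to each tracial state $\tau_\mu := ev_\mu\circ E^\omega$ on $\hm^\omega$ (for $\mu\in K^\omega$), and using that $\psi(M_p)$ commutes with $\hm\subseteq \hm^\omega$, one obtains
\begin{equation*}
\tau_\mu(\psi(e)) = tr(e), \qquad \tau_\mu(\psi(e)\,x) = tr(e)\,\tau_\mu(x)
\end{equation*}
for all $\mu\in K^\omega$, $e\in M_p$, and $x\in \hm$. Lifting to the bundle level yields the equalities $E^\omega(\psi(e)) = tr(e)\cdot 1$ and $E^\omega(\psi(e)x) = tr(e)\cdot E(x)$ in $C(K^\omega)$. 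Under the identification $C(K^\omega) \cong \prod_\omega C(K)$, vanishing of an element is equivalent to its representing sup-norms on $K$ tending to zero along $\omega$; this translates precisely into
\begin{equation*}
\lim_{n\to\omega}\sup_{\lambda\in K}\bigl|\tau_\lambda(\varphi_n(e)) - tr(e)\bigr| = 0 \quad\text{and}\quad \lim_{n\to\omega}\sup_{\lambda\in K}\bigl|\tau_\lambda(\varphi_n(e)x) - tr(e)\tau_\lambda(x)\bigr| = 0,
\end{equation*}
which give (vi) directly and, combined, yield (v).

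The main obstacle will be precisely this uniform-in-$\lambda$ aspect of (v) and (vi): pointwise convergence in $\lambda$ would be automatic from applying Lemma \ref{lem:matrixtrace} fiber by fiber, but the uniform statement hinges on encoding the fiberwise trace identities as equalities of elements in $C(K^\omega)$ and exploiting the $C^*$-ultraproduct structure $\prod_\omega C(K)$ to upgrade pointwise convergence to uniform convergence over $\lambda\in K$. The rest is a routine compactness argument over the finite-dimensional set $(M_p)_1$ and the norm-bounded, $\|\cdot\|_{2,u}$-compact set $\Omega$ to collapse the six $\omega$-limit conditions into estimates holding simultaneously for a single $\varphi_n$.
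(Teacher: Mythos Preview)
Your proof is correct and takes essentially the same approach as the paper's: lift $\psi$ to u.c.p.\ maps $\varphi_n:M_p\to\hm$, invoke Lemma~\ref{lem:matrixtrace} and uniqueness of the trace on $M_p$ for (v)--(vi), and then select a single $n$ by compactness of $(M_p)_1$, $\Omega$, and $K$. Your justification of the uniformity in $\lambda$---encoding the trace identities as equalities $E^\omega(\psi(e)) = \tr(e)\cdot 1$ and $E^\omega(\psi(e)x) = \tr(e)\cdot E(x)$ in $C(K^\omega)\cong\prod_\omega C(K)$ so that sup-norms over $K$ vanish along $\omega$---is more explicit than the paper's, which records only the pointwise statements for $\lambda\in K$ and then appeals tersely to compactness of $K$.
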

\begin{proof}
Lift $\psi$ to a sequence of u.c.p. maps $\varphi_n:M_p\to \hm$. Then we have
\begin{enumerate}
\item[(a)] $\lim_{n\to\omega}\| [\varphi_n(e),x]\|_{2,u} = 0\;\;\;\;\;\; (e\in M_p, x\in \hm);$
\item[(b)] $\lim_{n\to\omega} \| \beta(\varphi_n(e)) - \varphi_n(e) \|_{2,u} = 0\;\;\;\;\;\; (e\in M_p);$
\item[(c)] $\lim_{n\to \omega} \| \varphi_n(ef)-\varphi_n(e)\varphi_n(f) \|_{2,u} = 0\;\;\;\;\;\; (e,f\in M_p);$
\item[(d)] $\lim_{n\to\omega} \| [\varphi_n(e)^{1/2}, x] \|_{2,u} = 0\;\;\;\;\;\; (e\in M_p^{+}, x\in \hm).$
\end{enumerate}
By Lemma \ref{lem:matrixtrace}, for each tracial state $\tau_\omega$ on $\hm^{\omega}$ we have
$$
\tau_\omega(\psi(e)x) =  \tau_\omega(\psi(e))\tau_\omega(x)\;\;\;\;\;\; (e\in M_p, x\in \hm).
$$
Also since $M_p$ has a unique trace, $\tau_\omega \circ \psi = \tr$ on $M_p$. In particular,
\begin{enumerate}
\item[(e)] $\lim_{n\to\omega} \left| \tau_\lambda(\varphi_n(e)x) - \tau_\lambda (\varphi_n(e))\tau_\lambda(x) \right| = 0\;\;\;\;\;\; (e\in M_p, x\in \hm, \lambda\in K)$;
\item[(f)] $\lim_{n\to\omega} \left| \tau_\lambda(\varphi_n(e)) - \tr(e)  \right| = 0\;\;\;\;\;\;\; (e\in M_p, \lambda\in K)$.
\end{enumerate}
Since $(M_p)_1$, $\Omega$, and $K$ are compact, the lemma is proved by choosing $n$ sufficiently large along $\omega$.
\end{proof}

\begin{lem}
\label{lem:bundlepartition}
Let $(\hm,K,E)$ be a separable continuous $W^*$-bundle, and let $(\beta,\id)$ be an automorphism of $(\hm,K,E)$. Suppose for each $p\in \N$ there exists a unital *-homomorphism $\psi: M_p\to (\hm^\omega\cap \hm')^{\beta_\omega}$. Then given $\e > 0$, a norm bounded $\|\cdot\|_{2,u}$-compact subset $\Omega$ of $\hm$, and a partition of unity $\{g_j\}_{j=1}^N$ subordinate to some open cover of $K$, there exist positive contractions $a_1,a_2,...,a_N\in \hm$ such that 
\begin{enumerate}
\item[(i)] $\| [a_j,x] \|_{2,u} < \e\;\;\;\;\;\; (1\leq j\leq N, x\in \Omega)$;
\item[(ii)] $\| \beta(a_j)-a_j\|_{2,u} < \e\;\;\;\;\;\; (1\leq j\leq N)$;
\item[(iii)] $\| a_ia_j\|_{2,u} < \e\;\;\;\;\;\; (1\leq i\neq j\leq N)$;
\item[(iv)] $\| [a_j^{1/2}, x] \|_{2,u} < \e\;\;\;\;\;\;\; (1\leq j\leq N, x\in \Omega)$;
\item[(v)] $\| a_i^{1/2}a_j^{1/2} \|_{2,u} < \e\;\;\;\;\;\;\; (1\leq i\neq j \leq N)$;
\item[(vi)] $\left| \tau_\lambda(a_jx) - g_j(\lambda)\tau_\lambda(x) \right| < \e\;\;\;\;\;\; (1\leq j\leq N, x\in \Omega, \lambda\in K)$;
\item[(vii)] $\sum_{j=1}^N a_j = 1$.
\end{enumerate}
\end{lem}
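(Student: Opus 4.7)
The plan is to quantize the scalar partition of unity $\{g_j\}$ using the matrix subalgebra supplied by the hypothesis. Fix a large integer $p$ and, via Lemma \ref{lem:bundleucp}, a u.c.p.\ map $\varphi:M_p\to\hm$ with tolerance $\e_0$ (to be specified) on the compact set $\Omega$. The images $\varphi(e_{k,k})$ of the diagonal matrix units in $M_p$ are then pairwise almost orthogonal in $\|\cdot\|_{2,u}$, central up to $\e_0$, almost fixed by $\beta$, and each has fiberwise trace close to $1/p$. The remaining task is to combine these ``fine'' orthogonal blocks using coefficients in $C(K)\subseteq Z(\hm)$ so that the resulting sums have fiberwise traces equal to $g_j(\lambda)$.

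For this, introduce the cumulative functions $F_j:=g_1+\cdots+g_j\in C(K)$, so that $0=F_0\leq F_1\leq\cdots\leq F_N=1$, and define
\[
d_{k,j}(\lambda):=p\cdot\bigl|\bigl[\tfrac{k-1}{p},\tfrac{k}{p}\bigr]\cap\bigl[F_{j-1}(\lambda),F_j(\lambda)\bigr]\bigr|\in[0,1],
\]
a continuous function of $\lambda$. By construction $\sum_j d_{k,j}\equiv 1$ for each $k$, $\tfrac1p\sum_k d_{k,j}=g_j$, and for $i\neq j$ the product $d_{k,i}(\lambda)d_{k,j}(\lambda)$ vanishes unless the subinterval $[(k-1)/p,k/p]$ straddles a transition between consecutive zones, which happens for at most $O(N)$ indices $k$ and yields a product $\leq 1/4$; in particular $\sum_k d_{k,i}(\lambda)d_{k,j}(\lambda)\leq C_N$ for a constant $C_N$ depending only on $N$. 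Set
\[
a_j:=\sum_{k=1}^p d_{k,j}\,\varphi(e_{k,k}),\qquad b_j:=\sum_{k=1}^p d_{k,j}^{1/2}\,\varphi(e_{k,k}).
\]
Since $d_{k,j}\in C(K)\subseteq Z(\hm)$, each $a_j$ is a positive contraction, $\sum_j a_j=\sum_k\varphi(e_{k,k})=1$ so (vii) holds exactly, and (i), (ii) follow by expanding $[a_j,x]$ (resp.\ $\beta(a_j)-a_j$) as a sum of central scalars $d_{k,j}$ times the quantities controlled in Lemma \ref{lem:bundleucp}(i),(ii). For (vi), a direct computation using Lemma \ref{lem:bundleucp}(v),(vi) gives $\tau_\lambda(a_jx)\approx \tau_\lambda(x)\cdot\tfrac1p\sum_k d_{k,j}(\lambda)=g_j(\lambda)\tau_\lambda(x)$. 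For (iii), approximate multiplicativity (Lemma \ref{lem:bundleucp}(iii)) gives $\varphi(e_{k,k})\varphi(e_{\ell,\ell})\approx \delta_{k\ell}\varphi(e_{k,k})$ in $\|\cdot\|_{2,u}$, so expanding $a_ia_j$ and taking fiberwise traces reduces $\|a_ia_j\|_{2,u}^2$ to an estimate dominated by $\sup_\lambda\tfrac1p\sum_k d_{k,i}(\lambda)d_{k,j}(\lambda)\leq C_N/p$, which is below $\e^2$ once $p$ is large.

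The main obstacle is the square-root conditions (iv) and (v): $a_j^{1/2}$ is not a linear combination of the $\varphi(e_{k,k})$. To handle this I will show that $b_j$ approximates $a_j^{1/2}$ in $\|\cdot\|_{2,u}$. Expanding
\[
b_j^2=\sum_k d_{k,j}\,\varphi(e_{k,k})^2+\sum_{k\neq\ell}d_{k,j}^{1/2}d_{\ell,j}^{1/2}\,\varphi(e_{k,k})\varphi(e_{\ell,\ell}),
\]
both the diagonal (using $\varphi(e_{k,k})^2\approx\varphi(e_{k,k})$ from Lemma \ref{lem:bundleucp}(iii)) and off-diagonal (using $\varphi(e_{k,k})\varphi(e_{\ell,\ell})\approx 0$) terms give $\|b_j^2-a_j\|_{2,u}$ of order $p^2\e_0$. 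A fiberwise Powers--St\o rmer inequality $\|u-v^{1/2}\|_{2,\lambda}^2\leq \|u^2-v\|_{1,\lambda}\leq \|u^2-v\|_{2,\lambda}$ (for positive contractions $u,v$) then yields $\|b_j-a_j^{1/2}\|_{2,u}\leq(p^2\e_0)^{1/2}$. After this replacement, (iv) reduces to the (i)-type estimate $\|[b_j,x]\|_{2,u}\leq p\e_0\sup_{x\in\Omega}\|x\|$, and (v) reduces to the (iii)-type estimate $\|b_ib_j\|_{2,u}^2\lesssim C_N/p$, both handled as above since $d_{k,j}^{1/2}\in C(K)$. Finally, $\e_0$ is chosen small enough (essentially $\e_0\ll\e^2/(p^2(1+\sup_{x\in\Omega}\|x\|))$) and $p$ large enough that $C_N/p<\e^2$, so that all accumulated errors are below $\e$.
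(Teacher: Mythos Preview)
Your construction is essentially the same as the paper's: both quantize the scalar partition of unity by applying $\varphi$ to diagonal elements of $M_p$ whose entries are determined by the cumulative sums $F_j$, and your formula $a_j=\sum_k d_{k,j}\varphi(e_{k,k})$ with $d_{k,j}\in C(K)\subseteq Z(\hm)$ coincides fiberwise with the paper's $a_j(\lambda)=\varphi(p_j(\lambda))$ (where $p_j(\lambda)=\sum_k d_{k,j}(\lambda)e_{k,k}$). The paper assembles the local sections $a_j(\lambda)$ into a global element via Ozawa's section theorem, which your direct formula neatly sidesteps.

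The only real difference is your treatment of (iv) and (v) via the auxiliary element $b_j$ and the Powers--St{\o}rmer inequality. This works, but the paper's fiberwise viewpoint makes both conditions immediate: since $\pi_\lambda(a_j)=\pi_\lambda(\varphi(p_j(\lambda)))$ one has $\pi_\lambda(a_j^{1/2})=\pi_\lambda(\varphi(p_j(\lambda))^{1/2})$, so (iv) follows directly from Lemma~\ref{lem:bundleucp}(iv) applied with $e=p_j(\lambda)\in (M_p)_1$. For (v), note that by the trace property $\|a_i^{1/2}a_j^{1/2}\|_{2,\lambda}^2=\tau_\lambda(a_j^{1/2}a_ia_j^{1/2})=\tau_\lambda(a_ia_j)$, which is exactly the quantity already bounded in your verification of (iii); no approximation of $a_j^{1/2}$ is needed at all.
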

\begin{proof}
Without losing generality we may assume that $\e \leq 1$ and that $\Omega$ is contained in the (norm) unit ball of $\hm$. Put $\e' := \frac{\e^2}{4}$. Choose $p\in \N$ sufficiently large so that $\frac{1}{p} < \e'$. By the previous lemma there is a u.c.p. map $\varphi:M_p\to \hm$ such that (i)-(vi) in that lemma hold for $\e'$ and $\Omega$. 

Now we apply a technique from \cite{Oz01}. Let $\lfloor \cdot \rfloor$ denote the floor function. For each $t\in [0,p]$, define a positive contraction $p_t\in M_p$ by
$$
p_t = \diag(1,...,1, t - \lfloor t \rfloor,0,...,0),
$$
where $t- \lfloor t \rfloor$ is in the $(\lfloor t \rfloor + 1)$-th position. Note that the (unnormalized) trace of $p_t$ is equal to $t$. For $0\leq s\leq t\leq p$, we write
$$
p_{[s,t]} := p_t - p_s.
$$
Consider the functions $h_0,h_1,...,h_N\in C(K)$ defined by $h_0 \equiv 0$ and $h_j = p\cdot \sum_{i=1}^j g_i$ for $1\leq j\leq N$. In other words, $h_{j} - h_{j-1} = p\cdot g_j$ for $1\leq j\leq N$. Define, for $1\leq j\leq N$ and $\lambda\in K$,
$$
p_j(\lambda) := p_{[h_{j-1}(\lambda), h_j(\lambda)  ]}\in M_p
$$
and
$$
a_j(\lambda) = \varphi(p_j(\lambda))\in\hm.
$$
Observe that $\tr(p_j(\lambda)) = g_j(\lambda)$ and $\tr(p_i(\lambda)p_j(\lambda)) \leq \frac{1}{4p}$ when $i\neq j$. Since the map $\lambda\mapsto a_j(\lambda)$ is norm continuous, by \cite[Theorem 11]{Oz01} for each $j$ there exists an element $a_j\in \hm$ such that
$\pi_\lambda(a_j) = \pi_\lambda(a_j(\lambda))$ for all $\lambda\in K$. We claim that the positive contractions $a_1,...,a_N$ satisfy the conditions (i)-(vii). We demonstrate how to verify conditions (i) and (iii). The other conditions can be checked in a similar way.

The estimate (i) follows from a simple calculation:
\begin{align*}
\| [ a_j, x] \|_{2,u} &= \sup_{\lambda\in K} \| [a_j,x]\|_{2,\lambda} \\
&= 
\sup_{\lambda\in K} \| [\pi_\lambda(a_j), \pi_\lambda(x)] \|_{2,\lambda} \\
&=
\sup_{\lambda\in K} \| [\pi_\lambda(a_j(\lambda)), \pi_\lambda(x)] \|_{2,\lambda} \\
&=
\sup_{\lambda\in K} \| [a_j(\lambda), x]\|_{2,\lambda}\\
&=
\sup_{\lambda\in K} \| [\varphi(p_j(\lambda)), x] \|_{2,\lambda} \\
&< \e' < \e \;\;\;\;\;\;\; (\text{Lemma \ref{lem:bundleucp} (i)}).
\end{align*}

To estimate (iii), we first make the following observation. For $1\leq i\neq j\leq N$ and $\lambda\in K$, we have
\begin{align*}
\tau_\lambda(a_ia_j) &= \ip{ \pi_\lambda(a_i)\pi_\lambda(a_j)\hat{1},\hat{1}} \\
&=
\ip{ \pi_\lambda(a_i(\lambda))\pi_\lambda(a_j(\lambda))\hat{1}, \hat{1} } \\
&=
\tau_\lambda(a_i(\lambda)a_j(\lambda)) \\
&=
\tau_\lambda( [\varphi(p_i(\lambda))][\varphi(p_j(\lambda))] ). \\
\end{align*}
Since $\varphi$ is almost multiplicative with respect to the uniform 2-norm (Lemma \ref{lem:bundleucp} (iii)), the last expression is almost the same as $\tau_\lambda \circ\varphi(p_i(\lambda)p_j(\lambda  ))$, which in turn is almost the same as $\tr( p_i(\lambda)p_j(\lambda  )   )$ by Lemma \ref{lem:bundleucp} (vi). Recall that $p_i(\lambda)$ and $p_j(\lambda)$ are almost orthogonal with respect to the trace. Therefore we have
\begin{align*}
\tau_\lambda(a_ia_j) < \tr( p_i(\lambda)p_j(\lambda  )   ) + 2\e' < 3\e'.
\end{align*}
Now we compute
\begin{align*}
\| a_ia_j \|_{2,u}^2 &= \sup_{\lambda\in K} \tau_\lambda(a_j a_i^2 a_j) \\
&\leq 
\sup_{\lambda\in K} \tau_\lambda(a_ja_ia_j) \\
&= 
\sup_{\lambda\in K}\tau_\lambda(a_i^{1/2}a_j^2a_i^{1/2}) \\
&\leq
\sup_{\lambda\in K} \tau_\lambda(a_ia_j) < 3\e',
\end{align*}
and hence 
$$
\|a_ia_j\|_{2,u} < \sqrt{3\e'}  < \e.
$$
\end{proof}

\begin{rem}
In Lemma \ref{lem:bundlepartition}, the assumption that the automorphism is trivial on $K$ is needed when one verifies condition (ii). More precisely one needs the identity $\beta_\lambda(\pi_\lambda(a_j(\lambda))) = \pi_\lambda(\beta(a_j(\lambda)))$.
\end{rem}

With a mutually orthogonal partition of unity at our disposal, we can now glue the Rokhlin towers in the fibers without destroying the orthogonality. In this way we obtain a global Rokhlin tower as claimed in Theorem \ref{thm:bundleRokhlin}.

\begin{proof} (of Theorem \ref{thm:bundleRokhlin}) We will show that given $n\in \N$, $\e > 0$, and a norm-bounded $\|\cdot\|_{2,u}$-compact subset $\Omega$ of $\hm$, there exist positive contractions $r_1, r_2,...,r_n$ in $\hm$ such that
\begin{enumerate}
\item[(a)] $\| \beta(r_k)-r_{k+1}\|_{2,u} < \e\mod n\;\;\;\;\;\;\;\; (1\leq k\leq n)$;
\item[(b)] $\| r_kr_\ell \|_{2,u} < \e\;\;\;\;\;\;\;\; (1\leq k\neq \ell\leq n)$;
\item[(c)] $\| [r_k,x] \|_{2,u} < \e\;\;\;\;\;\;\;\; (1\leq k\leq n; x\in \Omega)$;
\item[(d)] $\left\| \sum_{k=1}^n r_k - 1_\hm  \right\|_{2,u} < \e$;
\item[(e)] $\sup_{\lambda\in K} \left| \tau_\lambda(r_1) - \frac{1}{n}  \right| < \e$.
\end{enumerate}
Once this is established, a standard argument for ultraproducts shows that we can find projections $p_1,...,p_n\in \hm^\omega\cap \hm'$ such that
\begin{enumerate}
\item[(i)] $\beta_\omega(p_k) = p_{k+1} \mod n\;\;\;\;\;\;\;\; (1\leq k\leq n)$;
\item[(ii)] $\sum_{k=1}^n p_k = 1$.
\end{enumerate}
By $(e)$ we may arrange that $\tau_{\lambda_\omega}(p_1) = \lim_{n\to\omega}\tau_{\lambda_n}(p_{1,n}) = \frac{1}{n}$, where $(p_{1,n})$ is a representing sequence of $p_1$ and $\lambda_\omega = (\lambda_1,\lambda_2,...)$ is an element in the set-theoretic ultraproduct $K_\omega$ of $K$. Since $K_\omega$ is dense in the ultracoproduct $K^\omega$, by the 2-norm continuity we actually have $\tau_{\lambda_\omega}(p_1) = \frac{1}{n}$ for all $\lambda_\omega$ in $K^\omega$. It follows that
\begin{align*}
\left( \| 1- p_1\|_{2,u}^{(\omega)}\right)^2 &= \left\| E^\omega(1-p_1)^*(1-p_1)  \right\| \\
&= \left\| E^\omega(1-p_1)  \right\| \\
&= \sup_{\lambda_\omega\in K^\omega}\tau_{\lambda_\omega}(1-p_1) \\
&= 1-\frac{1}{n} < 1
\end{align*}
and hence $\|1-p_1\|_{2,u}^{(\omega)} < 1$.

We make one more remark before we begin the construction of the postive contractions $r_1,r_2,...,r_n$ in $\hm$. Under the current assumption (namely the automorphism is trivial on $K$), condition (e) really is a consequence of (a) and (d). First note that by (a) for each $\lambda\in K$ and $1\leq k\leq n$,
$$
\tau_\lambda(r_k) = ev_\lambda\circ E(r_k) = ev_\lambda\circ E\circ \beta(r_k) = \tau_\lambda(\beta(r_k)) \approx \tau_\lambda(r_{k+1}).
$$
Therefore
$$
\left| n\tau_\lambda(r_1) - 1  \right| \approx \left| \tau_\lambda\left( \sum_{k=1}^n r_k  \right) - 1  \right| = \left| \tau_\lambda\left( \sum_{k=1}^n r_k - 1  \right) \right|,
$$
and the last expression is small by condition (d).

Now we start to construct $r_1,r_2,..,r_n$ in $\hm$ as claimed. We may assume that $\e\leq 1$ and that $\Omega$ is contained in the norm-closed unit ball of $\hm$. Put $\e' = \frac{\e}{4}$. By Connes' noncommutative Rokhlin theorem (\cite{Con01}), for each $\lambda\in K$ we can find (mutually orthogonal) projections $p_{\lambda,1},p_{\lambda,2},...,p_{\lambda,n}$ in $\pi_\lambda(\hm)$ such that 
\begin{enumerate}
\item[(a)'] $\| \beta_\lambda(p_{\lambda,k})-p_{\lambda,k+1} \|_{2,\lambda} < \e' \mod n \;\;\;\;\;\;\;\; (1\leq k\leq n)$; 
\item[(c)'] $\| [p_{\lambda,k}, \pi_\lambda(x)]\|_{2,\lambda} < \e'\;\;\;\;\;\;\;\; (1\leq k\leq n; x\in \Omega)$;
\item[(d)'] $\sum_{k=1}^n p_{\lambda, k} = 1_{\pi_\lambda
(\hm)}$.
\end{enumerate}
Fix $\lambda\in K$ for a while. Lift $p_{\lambda,1},...,p_{\lambda,n}$ along $\pi_\lambda:\hm\to \pi_\lambda(\hm)$ to positive contractions $r_{\lambda,1},...,r_{\lambda,n}$ in $\hm$. By the 2-norm continuity of elements in $\hm$, there exists a neighborhood $O_\lambda$ of $\lambda$ such that
\begin{enumerate}
\item[(a)''] $\sup_{\mu\in O_\lambda}\| \beta(r_{\lambda,k}) - r_{\lambda,k+1}\|_{2,\mu} < \e' \mod n \;\;\;\;\;\;\;\; (1\leq k\leq n)$;
\item[(b)''] $\sup_{\mu\in O_\lambda} \| r_{\lambda,k} r_{\lambda,\ell} \|_{2,\mu} < \e'\;\;\;\;\;\;\;\; (1\leq k\neq \ell\leq n)$.
\item[(c)''] $\sup_{\mu\in O_\lambda} \| [r_{\lambda,k}, x] \|_{2,\mu} < \e'\;\;\;\;\;\;\;\;\ (1\leq k\leq n; x\in \Omega)$;
\item[(d)''] $\sup_{\mu\in O_\lambda}\left\| \sum_{k=1}^n r_{\lambda,k} - 1_\hm  \right\|_{2,\mu} < \e'$;
\end{enumerate}

Since $K$ is compact, we can extract a finite subcover $O_1,...,O_N$ with corresponding elements $\{r_{1,1},...,r_{1,n}\}$,...,$\{ r_{N,1},...,r_{N,n} \}$. Let $\{g_j\}_{j=1}^N$ be a partition of unity subordinate to the open cover $\bigcup_{j=1}^N O_j$. Put
\begin{align*}
S := &\{r_{j,k}\} \cup \{r_{j,k}r_{j,\ell}\} \cup \{ \beta(r_{j,k})-r_{j,k+1} \} \cup \{ [r_{j,k}, x] \}  \cup \left\lbrace \sum_{j,k} r_{j,k} - 1  \right\rbrace
\end{align*}
(where the union is taken over $1\leq j\leq N, 1\leq k, \ell \leq N, x\in \Omega$),
\begin{align*}
\Omega'' &:= \Omega\cup S\cup S^*\cup SS^*\cup S^*S,
\end{align*}
and
$$
\e'' = \min \left\{  \frac{\e^2}{16(4n^2N^2+3N)},\;\; \frac{\e}{2(N^2+3N)}  \right\}.
$$
By Lemma \ref{lem:bundlepartition} we can find positive contractions $a_1,...,a_N\in \hm$ satisfying (i)-(vii) in the same lemma, with $\Omega$ and $\e$ replaced by $\Omega''$ and $\e''$, respectively. Define, for $1\leq k\leq n$,
$$
r_k = \sum_{j=1}^N a_j^{1/2}r_{j,k}a_j^{1/2}\in \hm.
$$
Note that $r_k$ is a positive contraction since
$$
r_k =  \sum_{j=1}^N a_j^{1/2}r_{j,k}a_j^{1/2}  \leq  \sum_{j=1}^N a_j^{1/2}\| r_{j,k}\| a_j^{1/2} \leq  \sum_{j=1}^N a_j = 1.
$$
We claim that $r_1, r_2,..., r_n$ satisfy conditions (a)-(d). We shall only verify (a) since the other conditions can be handled in a similar way.

Given $k\in \{1,...,n\}$, we write
\begin{align*}
 \beta(r_k)-r_{k+1}  &=   \sum_{j=1}^N \beta( a_j^{1/2}r_{j,k}a_j^{1/2} ) - \sum_{j=1}^N a_j^{1/2}r_{j,k+1}a_j^{1/2}  \\
&=: 
I_1 + I_2 + I_3 + I_4,
\end{align*}
where
\begin{align*}
I_1 &:= \sum_{j=1}^N \beta( a_j^{1/2} [  r_{j,k}, a_j^{1/2}  ]    );\;\;\;\;\;\;\;\; 
I_2 := \sum_{j=1}^N [\beta(a_j)-a_j]\beta(r_{j,k})  ;\\
I_3 &:= \sum_{j=1}^N a_j[ \beta(r_{j,k}) - r_{j,k+1}  ];\;\;\;\;\;\;\;\;
I_4 := \sum_{j=1}^N a_j^{1/2} [ a_j^{1/2}, r_{j,k+1}  ].
\end{align*}
It is easy to see that $\|I_1\|_{2,u}$, $\|I_2\|_{2,u}$, and $\|I_4\|_{2,u}$ are all bounded by $N\e''$, using Lemma \ref{lem:bundlepartition} (ii) and (iv). Before we estimate $\|I_3\|_{2,u}$, we make a simple but useful calculation. For each $\lambda\in K$, $y,z \in\hm$, and $1\leq i\neq j\leq N$, we have
\begin{align*}
\left|  \tau_\lambda(y^*a_ja_iz)  \right| &= |\tau_\lambda(zy^*a_ja_i) | \\
&\leq 
\|zy^*\|_{2,\lambda} \|a_ja_i\|_{2,\lambda} \\
&\leq 
\| y\| \|z\| \|a_ja_i\|_{2,\lambda}.
\end{align*}
To estimate $I_3$, we first write
\begin{align*}
I_3^*I_3 &=  \sum_{i,j=1}^N [ \beta(r_{j,k}) - r_{j,k+1} ]^* a_ja_i [ \beta(r_{i,k}) - r_{i,k+1} ]  \\
&=: J_1 + J_2,
\end{align*}
where $J_1$ consists of the ``cross terms`` and $J_2$ consists of the ``diagonal terms''. More precisely,
\begin{align*}
J_1 &:= \sum_{i\neq j}   [ \beta(r_{j,k}) - r_{j,k+1} ]^* a_ja_i [ \beta(r_{i,k}) - r_{i,k+1} ];\\
J_2 &:= \sum_{j=1}^N  [ \beta(r_{j,k}) - r_{j,k+1} ]^* a_j^2 [ \beta(r_{j,k}) - r_{j,k+1} ]. 
\end{align*}
Since $\|a_ja_i\|_{2,u} < \e''$, it is not hard to see that $\tau_\lambda(J_1)$ is bounded by $4N^2\e''$ using the inequality $|\tau_\lambda(y^*a_ja_iz)| \leq \|y\|\|z\|\|a_ja_i\|_{2,\lambda}$ established above. On the other hand,
\begin{align*} 
\tau_\lambda(J_2) &= \sum_{j=1}^N \tau_\lambda\left( [ \beta(r_{j,k}) - r_{j,k+1} ]^* a_j^2 [ \beta(r_{j,k}) - r_{j,k+1} ]    \right)  \\
&\leq
\sum_{j=1}^N \tau_\lambda\left(  a_j [ \beta(r_{j,k}) - r_{j,k+1} ][ \beta(r_{j,k}) - r_{j,k+1} ]^*   \right) \\
&\leq 
\sum_{j=1}^N g_j(\lambda) \tau_\lambda\left(  [ \beta(r_{j,k}) - r_{j,k+1} ][ \beta(r_{j,k}) - r_{j,k+1} ]^*  \right) + N\e''\;\;\;\;\;\;\;\; \text{(Lemma \ref{lem:bundlepartition} (vi))} \\
&=
\sum_{j=1}^N g_j(\lambda) \|  \beta(r_{j,k}) - r_{j,k+1} \|_{2,\lambda}^2 +N\e'' \\
&<
\sum_{j=1}^Ng_j(\lambda)\cdot (\e')^2 + N\e'' \\
&= (\e')^2 + N\e''.
\end{align*}
In summary, we have
$$
\|I_3\|_{2,u}^2 = \sup_{\lambda\in K}\tau_\lambda(I_3^*I_3) = \sup_{\lambda\in K}\tau_\lambda(J_1+J_2) < (\e')^2 + (4N^2+N)\e'',
$$
and hence
\begin{align*}
\| \beta(r_k) - \beta(r_{k+1}) \|_{2,u} &\leq \|I_1\|_{2,u} + \|I_2\|_{2,u} + \|I_3\|_{2,u} + \|I_4\|_{2,u} \\
&< 3N\e'' + \sqrt{ (\e')^2 + (4N^2+N)\e''} \\
&< \e. 
\end{align*}
When trying to verify conditions (b)-(d), one would encounter exactly the same type of estimates consisting of cross terms and diagonal terms. The cross terms are easy to bound because $\|a_ja_i\|_{2,u}$ is small when $j\neq i$, and the diagonal terms can be handled by taking out the partition of unity which localizes the estimate. We leave the details to the reader.
\end{proof}


\section{Approximate Rokhlin Property for $C^*$-algebras}

In this section we transfer the Rokhlin property for $W^*$-bundles to a ``Rokhlin property in trace'' for $C^*$-algebras. The main ingredient, as mentioned in the introduction, is the canonical surjection from $F(A)$ onto the central sequence of $W^*$-bundles. Here we recall the relevant definitions and results.

\begin{defn} (\cite[Definition 4.4]{KirRo01}) \label{def:sigmaideal}
Let $B$ be a $C^*$-algebra and let $J$ be a closed ideal in $B$. Then $J$ is a \textit{$\sigma$-ideal} of $B$ if for every separable $C^*$-subalgebra $C$ of $B$ there exists a positive contraction $e\in J$ such that $e\in C'\cap B$ and $ec = c$ for all $c\in C\cap J$.
\end{defn}

Let $A$ be a unital separable $C^*$-algebra such that $T(A) \neq \emptyset$. Recall that the trace-kernel ideal $J_A$ in $A_\omega$ is defined by
$$
J_A = \{ (a_n)_n\in A_\omega:\lim_{n\to\omega} \|a_n\|_{2,u} =0 \}.
$$

\begin{prop}\label{prop:traceideal} \cite[Proposition 4.6]{KirRo01} Let $A$ be a unital $C^*$-algebra with $T(A)\neq \emptyset$, then the trace-kernel ideal $J_A$ is a $\sigma$-ideal in $A_\omega$.
\end{prop}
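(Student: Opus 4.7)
The plan is to apply the standard diagonal/reindexation technique for ultraproducts. Given a separable $C^*$-subalgebra $C \subseteq A_\omega$, I must produce a positive contraction $e \in J_A$ commuting with $C$ and acting as a left unit on $C \cap J_A$. Since $J_A$ is closed in $A_\omega$, the intersection $C \cap J_A$ is a separable closed two-sided ideal of $C$.

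First I would construct a sequence $(e_n)_{n=1}^\infty$ of positive contractions in $C \cap J_A$ that serves as an approximate unit for $C \cap J_A$ and, at the same time, is quasi-central in $C$. The existence of such a sequence is a classical result of Akemann--Pedersen--Arveson: start with any countable approximate unit of $C \cap J_A$ and pass to suitable convex combinations so that $\|[e_n,c]\|\to 0$ for every $c\in C$. Crucially, each $e_n$ already lies in $J_A$, so its uniform $2$-norm in $A_\omega$ vanishes. Thus the obstruction of producing elements with small uniform $2$-norm is resolved automatically by choosing the approximate unit inside $J_A$ itself.

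Next I would collapse this sequence into a single element of $A_\omega$. Fix countable dense sequences $(c_j)_j \subseteq C$ and $(d_j)_j \subseteq C\cap J_A$, and lift every relevant element to a norm-bounded sequence in $\ell^\infty(\N,A)$, writing $c_j = \pi_\omega((c_j^{(k)})_k)$, $d_j = \pi_\omega((d_j^{(k)})_k)$, and $e_n = \pi_\omega((e_n^{(k)})_k)$, where each $e_n^{(k)}$ is a positive contraction in $A$. For every $n$, the conditions
\begin{align*}
\|[e_n,c_j]\| &< 1/n \;\; (j\le n), \\
\|e_n d_j - d_j\| &< 1/n \;\; (j\le n), \\
\lim_{k\to\omega}\|e_n^{(k)}\|_{2,u} &= 0
\end{align*}
all hold in $A_\omega$, so each translates into membership of a set in $\omega$ at the level of representing sequences. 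Intersecting finitely many such sets and choosing $n(k) \to \infty$ appropriately along $\omega$, set $e^{(k)} := e_{n(k)}^{(k)}$. Then $e := \pi_\omega((e^{(k)})_k)$ is a positive contraction in $A_\omega$, lies in $J_A$ (by the uniform $2$-norm condition), commutes with each $c_j$ (hence with all of $C$ by density), and satisfies $e d_j = d_j$ for every $j$ (hence $ec = c$ for all $c \in C \cap J_A$).

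The essential technical ingredient is the existence of a quasi-central approximate unit for $C\cap J_A$ relative to $C$; the subsequent $\omega$-reindexation is routine. I do not expect any serious obstacle: the separability of $C$ makes the Arveson-type argument applicable, and working inside $C\cap J_A$ ensures that the trace-kernel property is preserved throughout.
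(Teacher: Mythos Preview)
Your proposal is correct and matches the approach the paper describes (the paper does not actually prove this proposition but cites \cite{KirRo01} and identifies the two ingredients: a quasi-central approximate unit for $C\cap J_A$ in $C$, and a diagonal/reindexation argument, which the paper phrases via Kirchberg's $\e$-test, Lemma~\ref{lem:epsilontest}). Your explicit diagonalisation $e^{(k)}=e_{n(k)}^{(k)}$ is exactly what the $\e$-test encodes, so the two are the same argument in different packaging.
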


The proof relies on two important techniques. The first one is the existence of a quasi-central approximate unit. In fact, one can think of the element $e$ in Definition \ref{def:sigmaideal} as a repackaging of a quasi-central approximate unit using the language of ultraproducts, This repackaging relies on the second technique - an ultraproduct version of the standard diagonal argument commonly known as ``Kirchberg's $\e$-test'':

\begin{lem}\label{lem:epsilontest} $($\cite[Lemma 3.1]{KirRo01}$)$ Let $\omega$ be a free ultrafilter on $\N$. Let $X_1,X_2,...$ be any sequence of sets and suppose that, for each $k\in \N$, we are given a sequence $(f_n^{(k)})_{n\geq 1}$ of functions $f_n^{(k)}:X_n\to [0,\infty)$.

For each $k\in \N$, define $f_\omega^{(k)}:\prod_{n=1}^\infty X_n\to [0,\infty]$ by
$$
f_\omega^{(k)}(s_1,s_2,...) = \lim_{n\to\omega}f_n^{(k)}(s_n),\;\;\;\;\;\;\;\; (s_n)_{n\geq 1}\in \prod_{n=1}^\infty X_n.
$$
Suppose for every $m\in \N$ and $\e > 0$, there exists $s = (s_1,s_2,...)\in \prod_{n=1}^\infty X_n$ such that $f_\omega^{(k)}(s) < \e$ for $1\leq k\leq m$. Then there exists $t=(t_1,t_2,...)\in \prod_{n=1}^\infty X_n$ with $f_\omega^{(k)}(t) = 0$ for all $k\in \N$.
\end{lem}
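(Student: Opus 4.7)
The plan is a clean diagonal argument that glues together witnesses supplied by the hypothesis, using the freeness of $\omega$. First, for each $m\in\N$ I invoke the hypothesis with $\e=1/m$ to pick $s^{(m)}=(s_n^{(m)})_{n\geq 1}\in\prod_n X_n$ satisfying $f_\omega^{(k)}(s^{(m)})<1/m$ for $1\leq k\leq m$. Unpacking the ultralimit, this means that for each such $k$ the set $\{n:f_n^{(k)}(s_n^{(m)})<1/m\}$ lies in $\omega$, hence so does the finite intersection
$$B_m:=\{n\in\N:f_n^{(k)}(s_n^{(m)})<1/m\text{ for all }1\leq k\leq m\}.$$

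Next, set $C_m:=B_1\cap\cdots\cap B_m\cap\{n:n\geq m\}$. Freeness of $\omega$ places the tail $\{n\geq m\}$ in $\omega$, so each $C_m\in\omega$; by construction the sequence $(C_m)_m$ is decreasing and $\bigcap_m C_m=\emptyset$. I then define $m(n):=\sup\{m\geq 1:n\in C_m\}$ with the convention $\sup\emptyset:=0$; this supremum is finite for every $n$ precisely because $\bigcap_m C_m=\emptyset$. Finally I set $t_n:=s_n^{(m(n))}$ whenever $m(n)\geq 1$ and choose $t_n\in X_n$ arbitrarily otherwise.

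It remains to verify that $f_\omega^{(k)}(t)=0$ for every $k\in\N$. Given $\e>0$, pick $m\geq k$ with $1/m<\e$. For every $n\in C_m$ one has $m(n)\geq m\geq k$, so $n\in B_{m(n)}$, and the defining inequality for $B_{m(n)}$ at index $k$ yields
$$f_n^{(k)}(t_n)=f_n^{(k)}\bigl(s_n^{(m(n))}\bigr)<\frac{1}{m(n)}\leq\frac{1}{m}<\e.$$
Since $C_m\in\omega$, this forces $\lim_{n\to\omega}f_n^{(k)}(t_n)\leq\e$, and letting $\e\to 0$ gives $f_\omega^{(k)}(t)=0$, as required. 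The whole argument is essentially forced once one realizes that a single $t$ must annihilate \emph{every} $f_\omega^{(k)}$ simultaneously; the only real subtlety is arranging a decreasing sequence of sets in $\omega$ with empty intersection so that the diagonal index $m(n)$ is well defined at every $n$, which is exactly where the freeness of $\omega$ (i.e.\ $\{n\geq m\}\in\omega$) is indispensable.
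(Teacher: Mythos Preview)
Your argument is correct. The paper does not supply its own proof of this lemma; it simply quotes the statement from \cite[Lemma 3.1]{KirRo01} and uses it as a black box. The diagonal construction you give --- choosing witnesses $s^{(m)}$ for $\e=1/m$, intersecting the corresponding $\omega$-large sets with tails to force a decreasing sequence with empty intersection, and then diagonalizing via $m(n)$ --- is exactly the standard proof one finds in the Kirchberg--R\o rdam paper, so there is nothing to compare.
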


The inclusion $A\to \hm$ induces a well defined *-homomorphism from $A_\omega$ into $\hm^\omega$. Since $\hm$ is by definition the uniform 2-norm completion of $A$, there is a built-in Kaplansky type approximation and hence the map $A_\omega\to \hm^\omega$ is surjective (cf. \cite[Lemma 3.10]{BBSTWW}). Now the surjectivity of the canonical map $F(A)\to \hm^\omega\cap \hm'$ is a simple corollary of Proposition \ref{prop:traceideal}. For a proof we refer the reader to \cite[Remark 4.7]{KirRo01}.

\begin{cor}\label{cor:KRsurjectivity} Let $A$ be a unital separable $C^*$-algebra such that $T(A)\neq \emptyset$ and $\partial T(A)$ is compact. Then the canonical map $F(A)\to \hm^\omega\cap \hm'$ is surjective.
\end{cor}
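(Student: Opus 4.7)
The plan is to produce, given $y\in\hm^\omega\cap\hm'$, a preimage $z\in F(A)$ under the canonical map. Since $\hm$ is by construction the uniform 2-norm completion of $A$, the canonical $*$-homomorphism $A_\omega\to\hm^\omega$ is surjective (by the Kaplansky-type approximation noted just before the statement), and its kernel is precisely the trace-kernel ideal $J_A$. I would therefore begin by lifting $y$ to some $x\in A_\omega$. The only obstruction is that $x$ need not commute with $A$; however, since $y$ commutes with $A\subseteq\hm$, each commutator $[x,a]$ with $a\in A$ lies in the kernel of $A_\omega\to\hm^\omega$, i.e.\ in $J_A$. The task then reduces to correcting $x$ by something in $J_A$ so as to achieve exact commutation with $A$.

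The correction mechanism is supplied by Proposition \ref{prop:traceideal}, which asserts that $J_A$ is a $\sigma$-ideal of $A_\omega$. I would take $C$ to be any separable $C^*$-subalgebra of $A_\omega$ containing $A\cup\{x\}$; note that $C$ then automatically contains every commutator $[x,a]$ for $a\in A$. Applying Definition \ref{def:sigmaideal} to this $C$ furnishes a positive contraction $e\in J_A$ which commutes with all of $C$ and satisfies $ec=c$ for every $c\in C\cap J_A$.

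The candidate preimage is $z:=(1-e)x\in A_\omega$. Since $e$ commutes with $x\in C$ and with each $a\in A\subseteq C$, a direct computation gives $[z,a]=(1-e)[x,a]$ for $a\in A$. Because $[x,a]\in C\cap J_A$ we have $e[x,a]=[x,a]$, so $(1-e)[x,a]=0$, whence $z\in A_\omega\cap A'=F(A)$. On the other hand, because $e\in J_A$ is sent to zero by $A_\omega\to\hm^\omega$, the element $z=(1-e)x$ is sent to $1\cdot y=y$, which gives surjectivity. The genuinely technical content (the construction of the quasi-central contraction $e$ via Kirchberg's $\varepsilon$-test) is entirely encapsulated in Proposition \ref{prop:traceideal}, so no serious obstacle remains; the argument is a structural packaging of a quasi-central approximate unit living inside the trace-kernel ideal.
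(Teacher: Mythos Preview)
Your argument is correct and is precisely the standard $\sigma$-ideal argument from \cite[Remark 4.7]{KirRo01} that the paper cites in lieu of a proof. The paper does not give its own argument beyond noting that surjectivity is a simple corollary of Proposition~\ref{prop:traceideal}, so your write-up fills in exactly what the reference contains.
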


We shall need one more ingredient in order to lift a Rokhlin tower from the $W^*$-bundle back to the $C^*$-algebra. The following definition and proposition, which mimic Definition \ref{def:sigmaideal} and Proposition \ref{prop:traceideal}, are trying to capture the notion of an ``invariant'' quasi-central approximate unit in terms of ultraproducts.

\begin{defn}
Let $B$ be a $C^*$-algebra and $\alpha$ be an automorphism of $B$. Suppose $J$ is a closed ideal in $B$ satisfying $\alpha(J) = J$. Then $J$ is an \textit{$\alpha$-$\sigma$-ideal} if for every separable $C^*$-subalgebra $C$ of $B$ satisfying $\alpha(C) = C$, there exists a positive contraction $u$ in $C'\cap J$ such that
\begin{enumerate}
\item[(i)] $\alpha(u) = u$;
\item[(ii)] $uc = c$ for every $c\in C\cap J$.
\end{enumerate}
\end{defn}

\begin{prop}
\label{prop:alphasigmaideal}
Let $A$ be a unital separable $C^*$-algebra with $T(A)\neq \emptyset$, and let $\alpha$ be an automorphism of $A$. Then the trace-kernel ideal $J_A$ in $A_\omega$ is an $\alpha_\omega$-$\sigma$-ideal. 
\end{prop}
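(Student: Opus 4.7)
The strategy is to reduce to Proposition \ref{prop:traceideal} via an averaging argument along the F\o lner sequence of symmetric intervals in $\Z$, and then apply Kirchberg's $\e$-test (Lemma \ref{lem:epsilontest}) to pass from approximate to exact invariance. By Proposition \ref{prop:traceideal}, since $C$ is a separable $C^*$-subalgebra of $A_\omega$, there exists a positive contraction $v \in C' \cap J_A$ such that $vc = c$ for every $c \in C \cap J_A$. This $v$ need not be fixed by $\alpha_\omega$, so I will average its orbit to produce approximately invariant candidates.

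For each $n \in \N$ set
\[
u_n := \frac{1}{2n+1}\sum_{k=-n}^{n}\alpha_\omega^k(v) \in A_\omega,
\]
which is again a positive contraction. Since $\alpha$ is a trace-preserving bijection on $T(A)$, one checks that $\alpha_\omega(J_A)=J_A$, so each $\alpha_\omega^k(v)\in J_A$ and thus $u_n\in J_A$. Using $\alpha_\omega(C)=C$, for any $c\in C$ the element $\alpha_\omega^{-k}(c)$ lies in $C$, so $v$ commutes with it; applying $\alpha_\omega^k$ shows $[\alpha_\omega^k(v),c]=0$, hence $u_n\in C'$. By the same token, for $c\in C\cap J_A$ we have $\alpha_\omega^{-k}(c)\in C\cap J_A$ and therefore $v\,\alpha_\omega^{-k}(c)=\alpha_\omega^{-k}(c)$, which gives $\alpha_\omega^k(v)\,c=c$, and thus $u_n c=c$. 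Finally, telescoping yields
\[
\|\alpha_\omega(u_n)-u_n\|=\frac{1}{2n+1}\bigl\|\alpha_\omega^{n+1}(v)-\alpha_\omega^{-n}(v)\bigr\|\leq \frac{2}{2n+1}\to 0.
\]

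To upgrade approximate invariance to exact invariance, I apply Kirchberg's $\e$-test. Fix countable dense sequences $\{c_i\}_{i\in\N}\subseteq C$ and $\{d_i\}_{i\in\N}\subseteq C\cap J_A$, and choose bounded representing sequences $(c_i^{(j)})_j,(d_i^{(j)})_j$ in $A$. With $X_j$ the set of positive contractions in $A$, define, for each $i\in\N$, the nonnegative functions
\[
f_j^{(i)}(x):=\|[x,c_i^{(j)}]\|,\qquad g_j^{(i)}(x):=\|x d_i^{(j)}-d_i^{(j)}\|,\qquad h_j(x):=\|x\|_{2,u},\qquad k_j(x):=\|\alpha(x)-x\|.
\]
Lifting each $u_n$ to a bounded sequence of positive contractions makes the $\omega$-limits of $f^{(i)}, g^{(i)}, h$ vanish exactly, while the $\omega$-limit of $k$ is at most $2/(2n+1)$. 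Hence any finite subcollection of these functions can be made simultaneously smaller than any prescribed $\e$ by choosing $n$ large. Lemma \ref{lem:epsilontest} therefore produces a sequence in $\prod_j X_j$ whose class $u\in A_\omega$ is a positive contraction satisfying $u\in J_A$, $[u,c_i]=0$ for every $i$, $u d_i=d_i$ for every $i$, and $\alpha_\omega(u)=u$. Density then yields $u\in C'$ and $uc=c$ for all $c\in C\cap J_A$, completing the proof.

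The main obstacle is bookkeeping: one must verify that every property of $v$ survives Ces\`aro averaging, which rests decisively on the simultaneous $\alpha_\omega$-invariance of $C$ and $J_A$ so that the orbits $\alpha_\omega^k(v)$ continue to commute with $C$ and act as the identity on $C\cap J_A$; and one must set up the $\e$-test so that the countable families of constraints are captured by a single diagonal sequence.
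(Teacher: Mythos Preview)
Your proof is correct and follows essentially the same approach as the paper: start from the $\sigma$-ideal element provided by Proposition~\ref{prop:traceideal}, Ces\`aro-average along the $\alpha_\omega$-orbit to get approximate invariance while retaining the other properties (using $\alpha_\omega(C)=C$ and $\alpha_\omega(J_A)=J_A$), and then apply Kirchberg's $\e$-test to make invariance exact. The only cosmetic differences are that the paper averages over $\{0,\dots,n-1\}$ rather than the symmetric interval, and encodes the condition $uc=c$ for $c\in C\cap J_A$ via a single strictly positive element of $C\cap J_A$ rather than a dense sequence.
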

\begin{proof}
First note that $\alpha_\omega(J_A) = J_A$ since $\alpha_\omega$ is norm-preserving. Let $C$ be a separable $C^*$-subalgebra of $A_\omega$ satisfying $\alpha_\omega(C) = C$. By Proposition \ref{prop:traceideal}, $J_A$ is a $\sigma$-ideal in $A_\omega$, which means there exists a positive contraction $e\in C'\cap J_A$ such that $ec = c$ for all $c\in C\cap J_A$. For each $n\in \N$ define
$$
e^{(n)} := \frac{1}{n}\sum_{k=0}^{n-1}\alpha_\omega^k(e).
$$
Observe that each $e^{(n)}$ enjoys the same property as $e$. Indeed, for $c\in C$ we have
$$
\left\| [e^{(n)}, c] \right\| \leq \frac{1}{n}\sum_{k=0}^{n-1}\left\|  [\alpha_\omega^k(e), c]  \right\| = \frac{1}{n}\sum_{k=0}^{n-1}\left\|  [e, \alpha_\omega^{-k}(c)]  \right\| = 0,
$$
and for $c\in C\cap J_A$
$$
\| e^{(n)}c-c\|\leq \frac{1}{n}\sum_{k=1}^{n-1}\left\| \alpha_\omega^{k}(e)c-c   \right\| = \frac{1}{n}\sum_{k=1}^{n-1}\left\| e\alpha_\omega^{-k}(c)-\alpha_\omega^{-k}(c)   \right\| =0.
$$
Moreover,
$$
\left\| \alpha_\omega(e^{(n)}) - e^{(n)} \right\| \leq \frac{1}{n}\left\| \alpha_\omega^n(e) - e  \right\| \leq \frac{2}{n}.
$$

Let $d=(d_1,d_2,...)$ be a strictly positive contraction in the separable $C^*$-algebra $C\cap J_A$. Take a dense sequence $\{c^{(k)}\}_{k=1}^\infty$ in the unit ball of $C$, and represent each $c^{(k)}$ by $(c_1^{(k)}, c_2^{(k)},...)$. Let each $X_n$ be the set of positive contractions in $A$, and define functions $f_n^{(k)}:X_n\to [0,\infty)$ by
\begin{align*}
f_n^{(1)}(x) &= \|(1-x)d_n\|; \\
f_n^{(2)}(x) &= \|x\|_2;\\
f_n^{(3)}(x) &= \| \alpha(x) -x\|;\\
f_n^{(k+3)}(x) &= \|c_n^{(k)}x - xc_n^{(k)} \|\;\;\;\;\;\;\;\; (k\geq 1).
\end{align*}
Note that given $m\in \N$ and $\e >0$, there exists $\ell\in \N$ sufficiently large so that $e^{(\ell)}=(e_1^{(\ell)}, e_2^{(\ell)},...  )$ in $A_\omega$ satisfies
$$
f_\omega^{(k)}(e_1^{(\ell)},e_2^{(\ell)},...  ) < \e
$$
for all $1\leq k \leq m$. By the $\e$-test (Lemma \ref{lem:epsilontest}) there exists a sequence $(u_1,u_2,...)$ of positive contractions in $A$ such that
$$
f_\omega^{(k)}(u_1,u_2,...) = 0
$$
for all $k\in \N$. Then $u = (u_1,u_2,...)\in A_\omega$ is a positive contraction with the desired properties.
\end{proof}

There are various definitions of ``Rokhlin property in trace'' in the literature (e.g. \cite[Definition 4.2]{Ki02}, \cite[Definition 1.1]{OsPh01}, and \cite[Definition 1.1]{Sa01}). The following definition is similar, and it serves as a technical device that allows us to pass to finite Rokhlin dimension (see Section 6). Recall that if $(a_1,a_2,...)$ is a representative of an element $a$ in $A_\omega$, then we define
$$
\|a\|_{2,\omega} := \lim_{n\to\omega}\sup_{\tau\in T(A)} \tau(a_n^*a_n)^{1/2}.
$$
In other words, if $\pi:A_\omega\to \hm^\omega$ is the canonical map, then $\|a\|_{2,\omega} = \| \pi(a) \|_{2,u}^{(\omega)}$.

\begin{defn}
\label{defn:approxRokhlin}
Let $A$ be a unital separable stably finite exact $C^*$-algebra, and let $\alpha$ be an automorphism of $A$. We say $\alpha$ has the \textit{approximate Rokhlin property} if for every $n\in \N$ there exist positive contractions $f_1,...,f_n$ in $F(A)$ such that
\begin{enumerate}
\item[(i)] $\alpha_\omega(f_k) = f_{k+1}\mod n\;\;\;\;\;\;\;\; (1\leq k\leq n)$;
\item[(ii)] $f_k f_\ell = 0\;\;\;\;\;\;\;\; (1\leq k\neq \ell \leq n)$;
\item[(iii)] $1 - \sum_{k=1}^n f_k\in J_A\cap F(A)$.
\item[(iv)] $\sup_{m\in \N} \| 1-f_1^m\|_{2,\omega} < 1$.
\end{enumerate}
\end{defn}

Let $A$ be a unital separable $C^*$-algebra such that $T(A)\neq \emptyset$ and $\partial T(A)$ is compact. Given an automorphism $\alpha\in \Aut(A)$, there is an induced automorphism $(\tilde{\alpha},\bar{\alpha})$ of $(\hm, K, E)$ defined in an obvious way. This in turn induces an automorphism $(\tilde{\alpha}_\omega, \bar{\alpha}_\omega)$, as discussed in Section 3, of the ultraproduct $(\hm^\omega, K^\omega, E^\omega)$.

\begin{prop}
\label{prop:approxRokhlin}
Let $A$ be a unital separable $C^*$-algebra such that $T(A) \neq \emptyset$ and $\partial T(A)$ is compact, and let $\alpha$ be an automorphism of $A$. Suppose the canonical extension $(\tilde{\alpha}, \bar{\alpha})\in \Aut(\overline{A}^u, \partial T(A), E) =: \Aut(\hm, K, E)$ has the following Rokhlin property: for every $n\in \N$, there exist projections $p_1,...,p_n$ in $\hm^\omega\cap \hm'$ such that
\begin{enumerate}
\item[(i)] $\tilde{\alpha}_\omega(p_k) = p_{k+1}\mod n\;\;\;\;\;\;\;\; (1\leq k\leq n)$;
\item[(ii)] $\sum_{k=1}^n p_k = 1_\hm$;
\item[(iii)] $\| 1- p_1\|_{2,u}^{(\omega)} < 1$.
\end{enumerate}
Then $\alpha$ has the approximate Rokhlin property.
\end{prop}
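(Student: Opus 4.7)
The plan is to lift the $W^*$-bundle Rokhlin projections $p_1,\ldots,p_n$ to positive contractions in $F(A)$, and then to correct the lifts so that the required relations hold exactly rather than merely modulo $J_A$. The two engines are Corollary \ref{cor:KRsurjectivity} (surjectivity of the canonical map $F(A)\to \hm^\omega\cap \hm'$) and Proposition \ref{prop:alphasigmaideal} (the $\alpha_\omega$-$\sigma$-ideal property of $J_A$). Using the surjection, for each $k$ I would choose a positive contraction $\tilde f_k\in F(A)$ lifting $p_k$ (take any self-adjoint lift and apply the continuous functional calculus $t\mapsto \max\{0,\min\{1,t\}\}$). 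The ``error terms''
\[
\tilde f_k\tilde f_\ell\ (k\neq\ell),\qquad \alpha_\omega(\tilde f_k)-\tilde f_{k+1},\qquad 1-\sum_{k=1}^n \tilde f_k
\]
all vanish under the quotient $A_\omega\to \hm^\omega$, so they lie in $J_A$.

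Next, let $C\subseteq A_\omega$ be the separable $\alpha_\omega$-invariant $C^*$-subalgebra generated by $A$ together with the full orbit $\{\alpha_\omega^j(\tilde f_k):1\le k\le n,\ j\in\Z\}$; this contains all three error terms above. Proposition \ref{prop:alphasigmaideal} then produces a positive contraction $u\in C'\cap J_A$ with $\alpha_\omega(u)=u$ and $uc=c$ for every $c\in C\cap J_A$. Since $A\subseteq C$, the element $u$ lies in $F(A)$. The crucial consequence is that for $c\in C\cap J_A$ we have $(1-u)c=0$, and hence $\|(1-u)^{1/2}c\|^2=\|c^*(1-u)c\|=0$, so $(1-u)^{1/2}c=0$ as well. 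I would then define
\[
f_k := (1-u)^{1/2}\,\tilde f_k\,(1-u)^{1/2},\qquad 1\le k\le n,
\]
which lies in $F(A)$ since both factors commute with $A$. Because $u$ commutes with every element of $C$, the element $(1-u)^{1/2}$ moves freely past $\tilde f_k$, and the above annihilation property immediately yields $f_k f_\ell = 0$ for $k\neq \ell$ and $\alpha_\omega(f_k)=f_{k+1}$ (the latter uses $\alpha_\omega(u)=u$ to slide $\alpha_\omega$ past $(1-u)^{1/2}$). For (iii) one expands $1-\sum_k f_k = (1-\sum_k \tilde f_k) + u\sum_k \tilde f_k$; both summands lie in $J_A\cap F(A)$.

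Condition (iv) looks the most daunting because of the supremum over $m$, but in fact it becomes essentially automatic once the setup is in place. Let $\pi:A_\omega\to \hm^\omega$ denote the canonical $*$-homomorphism. Since $u\in J_A$, we have $\pi(1-u)=1$, and therefore
\[
\pi(f_1) = \pi((1-u)^{1/2})\,\pi(\tilde f_1)\,\pi((1-u)^{1/2}) = p_1.
\]
As $p_1$ is a projection, $\pi(f_1^m)=p_1^m=p_1$ for every $m\in\N$, so $\|1-f_1^m\|_{2,\omega} = \|1-p_1\|_{2,u}^{(\omega)} < 1$ uniformly in $m$ by hypothesis (iii) on the $p_k$. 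The main point requiring care is the bookkeeping for $C$: it must be simultaneously separable, $\alpha_\omega$-invariant, and large enough to absorb all the error terms, so that a single $u$ produces every cancellation at once. Enlarging the generating set to include the full $\alpha_\omega$-orbit of $\{\tilde f_1,\ldots,\tilde f_n\}$, as above, handles this without difficulty.
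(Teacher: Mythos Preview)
Your proof is correct and follows essentially the same approach as the paper's: lift the projections via Corollary \ref{cor:KRsurjectivity}, build the separable $\alpha_\omega$-invariant $C^*$-subalgebra $C$ containing all error terms, invoke Proposition \ref{prop:alphasigmaideal} to obtain the invariant unit $u$, and correct the lifts by conjugating with $1-u$. The only cosmetic difference is that the paper conjugates by $(1-u)$ rather than $(1-u)^{1/2}$; since $u$ commutes with every $\tilde f_k$, both choices collapse to the same kind of formula (your $f_k$ equals $(1-u)\tilde f_k$, the paper's equals $(1-u)^2 f_k'$), and the verifications of (i)--(iv) proceed identically.
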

\begin{proof}
Fix $n\in \N$. Find projections $p_1,...,p_n\in \hm^\omega\cap \hm'$ as in the assumption. Since the canonical map $F(A)\to \hm^\omega\cap \hm'$ is surjective, we can lift $\{p_k\}_{k=1}^n$ to positive contractions $\{f_k'\}_{k=1}^n$ in $F(A)$. Observe that
\begin{enumerate}
\item[(a)] $\alpha_\omega(f_k') - f_{k+1}'\in J_A\cap F(A)\mod n\;\;\;\;\;\;\;\ (1\leq k\leq n)$;
\item[(b)] $f_k'f_\ell'\in J_A\cap F(A)\;\;\;\;\;\;\; (1\leq k\neq \ell \leq n)$;
\item[(c)] $1-\sum_{k=1}^n f_k' \in J_A\cap F(A)$.
\end{enumerate}
If we put $C:= C^*(A, \{ \alpha_\omega^j(f_k') \}_{1\leq k\leq n}^{j\in \Z}  )$, then $C$ is a separable $C^*$-subalgebra of $A_\omega$ satisfying $\alpha_\omega(C) = C$. Since $J_A$ is an $\alpha_\omega$-$\sigma$-ideal in $A_\omega$ (Proposition \ref{prop:alphasigmaideal}), there exists a positive contraction $u\in C'\cap J_A$ such that
\begin{enumerate}
\item[(1)] $\alpha_\omega(u) = u$;
\item[(2)] $uc = c$ for every $c\in C\cap J_A$.
\end{enumerate}
Define, for $1\leq k\leq n$,
$$
f_k := (1-u)f_k'(1-u). 
$$
Then each $f_k$ is a positive contraction in $F(A)$ since both $f_k'$ and $u$ commute with $A$. Moreover $f_k$ is also a lift of $p_k$. We claim that $\{f_k\}_{1\leq k\leq n}$ satisfy the conditions for the approximate Rokhlin property. Indeed, since $\alpha_\omega(f_k')-f_{k+1}'$ belongs to $C'\cap J_A$, 
\begin{align*}
\alpha_\omega(f_k) - f_{k+1} &= \alpha_\omega((1-u)f_k'(1-u)) - (1-u)f_{k+1}'(1-u) \\
&= (1-u)\left[ \alpha_\omega (f_k') - f_{k+1}'  \right] (1-u) \\
&= 0.
\end{align*}
Similarly,
\begin{align*}
f_kf_\ell &= (1-u)f_k'(1-u)^2 f_\ell'(1-u) \\
&= (1-u)^3 f_k'f_\ell' (1-u) \\
&= 0.
\end{align*}
Observe that
\begin{align*}
1 -\sum_{k=1}^n f_k &= 1 - (1-u)\left( \sum_{k=1}^n f_k' \right)(1-u) \\
&= \left( 1 - \sum_{k=1}^n f_k'\right) + \sum_{k=1}^n (uf_k' + f_k' u - uf_k'u).
\end{align*}
Since $u\in J_A$ and $J_A$ is an ideal, the last expression belongs to $J_A\cap F(A)$. Finally,
$$
\sup_{m\in \N}\| 1- f_1^m \|_{2,\omega} = \sup_{m\in \N}\| 1 - \pi(f_1)^m \|_{2,u}^{(\omega)} = \sup_{m\in \N}\| 1 - p_1^m \|_{2,u}^{(\omega)} = \| 1 - p_1 \|_{2,u}^{(\omega)} < 1.
$$
This completes the proof.
\end{proof}


\section{Property (SI) and Rokhlin dimension}

In this section we prove Theorem \ref{thm:main} and Corollary \ref{cor:main}. In particular we discuss how to pass from the approximate Rokhlin property, introduced in the previous section, to Rokhlin dimension. The results described in this section are largely due to Matui and Sato. We will present their proof in a way that fits the language of this paper, but we don't claim any credit or originality.
 
As we will see, property (SI) plays a crucial role. Here we state the definition in terms of central sequence algebras as in \cite{KirRo01}. 

\begin{defn} (\cite[Definition 2.6]{KirRo01})
A unital simple $C^*$-algebra has \textit{property (SI)} if for every positive contraction $e,f\in F(A)$, with $e\in J_A$ and $\sup_n\| 1 - f^n \|_{2,\omega} < 1$, there exists $s\in F(A)$ with $fs = s$ and $s^*s = e$.
\end{defn}

\begin{prop}
\label{prop:mergebySI}
Let $A$ be a unital simple separable $C^*$-algebra such that $T(A)\neq \emptyset$ and $\partial T(A)$ is compact. and let $\alpha$ be an automorphism of $A$. Suppose $A$ has property (SI), and $\alpha$ has the approximate Rokhlin property. Then for every $m\in \N$ there exists $g_1,...,g_m, v$ in $F(A)$ such that
\begin{enumerate}
\item[(i)] $g_1,...,g_m$ are positive contractions;
\item[(ii)] $g_ig_j = 0$ whenever $1\leq i\neq j\leq m$;
\item[(iii)] $\alpha_\omega(g_k) = g_{k+1}\mod m\;\;\;\;\;\;\;\; (1\leq k\leq m)$;
\item[(iv)] $g_1v = v$;
\item[(v)] $\sum_{k=1}^m g_k + v^*v = 1$;
\item[(vi)] $\alpha_\omega^m(v) = v$.
\end{enumerate}
\end{prop}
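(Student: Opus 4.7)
The plan is to invoke the approximate Rokhlin property with height $2m$, collapse consecutive tower elements in pairs to produce a length-$m$ Rokhlin tower $(g_k)_{k=1}^m$, and then use property (SI) together with a symmetrization trick to manufacture $v$ absorbing the defect.

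First, apply Definition \ref{defn:approxRokhlin} with $n=2m$ to obtain positive contractions $f_1,\ldots,f_{2m}\in F(A)$ with the properties listed there. Set $g_k := f_k + f_{k+m}$ for $1\le k\le m$. The pairwise orthogonality of the $f_k$ and the cyclic relation $\alpha_\omega(f_k)=f_{k+1\bmod 2m}$ immediately yield conditions (i)--(iii), and in addition $\alpha_\omega^m(g_k)=g_k$. The defect $e:=1-\sum_{k=1}^{2m}f_k = 1-\sum_{k=1}^m g_k$ lies in $J_A\cap F(A)$ by condition (iii) of the approximate Rokhlin property and is $\alpha_\omega$-fixed (hence $\alpha_\omega^m$-fixed) since $\alpha_\omega$ merely permutes the $f_k$.

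Next, condition (iv) of the approximate Rokhlin property gives $\sup_n\|1-f_1^n\|_{2,\omega}<1$, so property (SI) applied to the pair $(e,f_1)$ produces $s\in F(A)$ with $f_1 s = s$ and $s^*s = e$. Applying $\alpha_\omega^m$ to these relations yields $f_{m+1}\alpha_\omega^m(s) = \alpha_\omega^m(s)$ and $\alpha_\omega^m(s)^*\alpha_\omega^m(s) = e$. I then set $v:=\tfrac{1}{\sqrt 2}\bigl(s+\alpha_\omega^m(s)\bigr)$. The orthogonality $f_1 f_{m+1}=0$ forces $f_1\alpha_\omega^m(s)=0$ and $f_{m+1}s=0$, so the cross terms $s^*\alpha_\omega^m(s)$ and $\alpha_\omega^m(s)^*s$ vanish; this gives $v^*v = \tfrac{1}{2}(e+e) = e$ and $g_1 v = v$, verifying (iv) and (v).

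The main obstacle is condition (vi), $\alpha_\omega^m(v) = v$, which reduces to $\alpha_\omega^{2m}(s) = s$ and is not automatic from property (SI) as stated. To overcome it, I would appeal to an equivariant refinement of (SI): since both $f_1$ and $e$ are fixed by $\alpha_\omega^{2m}$, one combines Kirchberg's $\epsilon$-test (Lemma \ref{lem:epsilontest}) with the reindexing freedom of the central sequence algebra $F(A)$ to realize the SI element $s$ inside the fixed-point subalgebra $F(A)^{\alpha_\omega^{2m}}$. This equivariant strengthening of (SI) is the hardest technical input; once it is available, $\alpha_\omega^{2m}(s) = s$ forces $\alpha_\omega^m(v) = v$ and completes the construction.
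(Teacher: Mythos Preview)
Your construction follows the paper's template---take a long Rokhlin tower, group it into $m$ blocks to form the $g_k$'s, and use property (SI) to absorb the tracial defect---and you have correctly located the crux: condition (vi) does not follow from averaging over only two copies. However, your proposed remedy is where the argument breaks down. The claim that one can ``combine Kirchberg's $\epsilon$-test with the reindexing freedom of $F(A)$'' to place the SI element $s$ inside $F(A)^{\alpha_\omega^{2m}}$ is not a proof. To run the $\epsilon$-test you would need, for every $\epsilon>0$, an element $s_\epsilon\in F(A)$ with $f_1 s_\epsilon=s_\epsilon$, $s_\epsilon^*s_\epsilon=e$, and $\|\alpha_\omega^{2m}(s_\epsilon)-s_\epsilon\|<\epsilon$; nothing in the bare (SI) hypothesis supplies such approximate invariance, and the natural averaging $\tilde s=\frac{1}{N}\sum_k\alpha_\omega^{2mk}(s)$ destroys $\tilde s^*\tilde s=e$ because in a height-$2m$ tower the cross terms $\alpha_\omega^{2mj}(s)^*\alpha_\omega^{2mk}(s)$ have no reason to vanish. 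Equivariant (SI) statements do exist in later literature, but they require substantial extra work and are not available from the ingredients in this paper.

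The paper sidesteps the issue by a more elementary device you overlooked: rather than fixing the tower height at $2m$, take height $\ell m$ for a large parameter $\ell$, and set
\[
g_k=\sum_{i=0}^{\ell-1}f_{k+im},\qquad v=\frac{1}{\sqrt{\ell}}\sum_{i=0}^{\ell-1}\alpha_\omega^{im}(s).
\]
The cross terms in $v^*v$ still vanish (since the $f_{im+1}$, $0\le i\le\ell-1$, are pairwise orthogonal within the long tower), so (i)--(v) hold exactly, while
\[
\|\alpha_\omega^m(v)-v\|=\tfrac{1}{\sqrt{\ell}}\|\alpha_\omega^{\ell m}(s)-s\|\le\tfrac{2}{\sqrt{\ell}}.
\]
Letting $\ell\to\infty$ and invoking the $\epsilon$-test then produces elements satisfying (i)--(vi) exactly. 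In short, the averaging trick itself, applied with longer and longer towers, already manufactures the approximate invariance needed---no equivariant (SI) is required.
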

\begin{proof}
Fix $m\in \N$ and $\e > 0$. By the $\e$-test (Lemma \ref{lem:epsilontest}) it suffices to construct $g_1,...,g_m, v$ in $F(A)$ satisfying (i)-(v) and
\begin{enumerate}
\item[(vi)'] $\| \alpha_\omega^m(v) - v \| < \e$.
\end{enumerate}
Choose $\ell\in \N$ sufficiently large so that $\frac{2}{\sqrt{\ell}} < \e$. Since $\alpha$ has the approximate Rokhlin property, we can find positive contractions $f_1,...,f_{\ell m}$ in $F(A)$ such that
\begin{enumerate}
\item[(1)] $\alpha_\omega(f_k) = f_{k+1}\mod \ell m\;\;\;\;\;\;\;\; (1\leq k\leq \ell m)$;
\item[(2)] $f_if_j = 0\;\;\;\;\;\;\;\; (1 \leq i\neq j\leq \ell m)$; 
\item[(3)] $e := 1 - \sum_{k=1}^{\ell m}f_k \in J_A$.
\end{enumerate}
Note that from (1) and (3) we have $\alpha_\omega(e) = e$. Let $\pi:F(A) \to \hm^\omega\cap \hm'$ be the canonical surjection. Observe that each $\pi(f_k)$ is a projection in $\hm^\omega\cap \hm'$ and
$$
\sum_{k=0}^{\ell m -1} \tilde{\alpha}_\omega^k( \pi(f_1))  = \sum_{k=1}^{\ell m}\pi(f_k) = 1.
$$
Since $A$ has property (SI), there exists $s\in F(A)$ such that $s^*s = e$ and $f_1s = s$. Define $g_1,...,g_m, v\in F(A)$ by
$$
g_k := \sum_{i=0}^{\ell-1} f_{k+im}\;\;\;\; \text{and}\;\;\;\; v := \frac{1}{\sqrt{\ell}}\sum_{i=0}^{\ell-1}\alpha_\omega^{im}(s)\;\;\;\;\;\; (1\leq k\leq m).
$$
We claim that these elements have the desired properties.

Conditions (i)-(iii) are obviously satisfied. Observe that whenever $1\leq i\neq j\leq \ell-1$,
$$
\alpha^{jm}_\omega(s^*)\alpha_\omega^{im}(s) = \alpha^{jm}_\omega(s^*)f_{jm}f_{im}\alpha_\omega^{im}(s) = 0.
$$
Therefore
$$
v^*v = \frac{1}{\ell}\sum_{i,j=0}^{\ell-1}\alpha_\omega^{jm}(s^*)\alpha_\omega^{im}(s) = \frac{1}{\ell}\sum_{i=0}^{\ell-1}\alpha_\omega^{im}(s^*s) = \frac{1}{\ell}\sum_{i=0}^{\ell-1}e = e.
$$
This observation directly implies condition (v). To verify condition (iv) one uses the facts that $f_1s = s$ and $f_if_j = 0$ whenever $i\neq j$. Finally (vi)' follows from a straightforward computation.
\end{proof}

The following lemma appeared in Kishimoto's work on one-sided shifts on UHF algebras (\cite{Ki02}), and also in \cite{Na01} and \cite{Ma01}. Roughly speaking, the result says that if an automorphism of a matrix algebra is ``almost'' a cyclic shift, then it has a certain kind of Rokhlin property.

\begin{lem}\label{lem:kishimoto} (cf. \cite[Lemma 2.2]{Ki02})
For any $m\in \N$ and $\e>0$, there exists a positive integer $\ell\in \N$ such that the following holds. If $\sigma = \Ad u$ is an automorphism of $M_{\ell m+1}$ implemented by a unitary $u$ with eigenvalues
$$
1, \omega^0, \omega^1,...,\omega^{\ell m-1},
$$
where $\omega = e^{2\pi i/\ell m}$ (here $i=\sqrt{-1}$), then there exist mutually orthogonal projections 
$$
p_1,p_2,...,p_m, q_1,q_2,...,q_{m+1}
$$
in $M_{\ell m + 1}$ such that
\begin{enumerate}
\item[(i)] $\| \sigma(p_k) - p_{k+1}\| < \e \mod m\;\;\;\;\;\;\;\; (1\leq k\leq m)$;
\item[(ii)] $\| \sigma(q_k) - q_{k+1}\| < \e \mod (m+1)\;\;\;\;\;\;\;\; (1\leq k\leq m+1)$;
\item[(iii)] $\sum_{k=1}^m p_k + \sum_{k=1}^{m+1}q_k = 1$.
\end{enumerate}
\end{lem}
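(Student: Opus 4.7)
The plan is to carry out Kishimoto's explicit construction in the matrix algebra $M_{\ell m + 1}$, as detailed in \cite[Lemma 2.2]{Ki02}. The approach combines two ingredients: (a) approximation of the $(m+1)$-th roots of unity by eigenvalues of $u$ to build the $q$-tower, and (b) the exact cyclic structure of $u$ on the Fourier-dual side of its cyclic part to build the $p$-tower. Taking $\ell$ sufficiently large, and coprime to $m+1$, allows both constructions to coexist consistently.

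First I would diagonalize $u$: its eigenvalues $1, \omega^0, \omega^1, \ldots, \omega^{\ell m - 1}$ (with $\omega = e^{2\pi i/(\ell m)}$) give an orthonormal basis $\{\xi\} \cup \{\eta_j\}_{j=0}^{\ell m - 1}$ of $\C^{\ell m + 1}$ with $u\xi = \xi$ and $u \eta_j = \omega^j \eta_j$. For the $q$-tower, pick $j_r$ to be the integer closest to $r\ell m/(m+1)$ for $r = 1, \ldots, m$; then $V := \mathrm{span}\{\xi, \eta_{j_1}, \ldots, \eta_{j_m}\}$ is $u$-invariant, and $u|_V$ lies within operator norm $O(1/\ell)$ of a genuine cyclic shift $w$ of order $m+1$ on $V$ (obtained by replacing the eigenvalue $\omega^{j_r}$ with $e^{2\pi i r/(m+1)}$). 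The rank-one projections onto a cyclic ONB for $w$ then furnish the $q_k$'s, satisfying $\|\sigma(q_k) - q_{k+1}\| \leq 2\|u|_V - w\| = O(1/\ell) < \e$ for $\ell$ large. For the $p$-tower, pass to the Fourier-dual basis $\psi_k := (\ell m)^{-1/2}\sum_j \omega^{jk}\eta_j$ on the non-fixed part, in which $\sigma$ exactly cyclically permutes the rank-one projections $\psi_k \psi_k^*$ with period $\ell m$. Using the identity $\ell m + 1 = m(\ell - 1) + (m+1)$ and choosing $\ell$ coprime to $m+1$ so that $\{j_r\}_{r=1}^m$ is a complete residue system modulo $m$, each residue class modulo $m$ in $\Z/\ell m$ contains exactly $\ell - 1$ indices outside $\{j_1, \ldots, j_m\}$; summing $\psi_j \psi_j^*$ over the $k$-th such residue class produces $p_k$ of rank $\ell - 1$, permuted exactly and cyclically by $\sigma$.

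The main obstacle is the simultaneous coordination of the three requirements: small $q$-tower approximation error, balanced $p$-tower ranks, and mutual orthogonality of $\{p_k\} \cup \{q_k\}$ with total sum equal to $1_{M_{\ell m + 1}}$. Mutual orthogonality is the delicate point, because the $p_k$'s are defined via the Fourier-dual basis $\{\psi_k\}$ while $V$ is defined via the eigenbasis $\{\eta_j\}$, and these two decompositions do not align exactly. A small corrective rotation (whose size vanishes as $\ell \to \infty$) brings the $p_k$'s into orthogonality with $V$ up to an error $O(1/\ell)$, and a final perturbation argument upgrades this to exact orthogonality at the cost of a further $O(1/\ell)$ error. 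Choosing $\ell$ large enough (depending on $m$ and $\e$) to make all these $O(1/\ell)$ terms fall below $\e$ completes the proof.
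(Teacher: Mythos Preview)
The paper does not prove this lemma; it simply cites \cite[Lemma 2.2]{Ki02}. Your $q$-tower construction is sound: on $V=\mathrm{span}\{\xi,\eta_{j_1},\ldots,\eta_{j_m}\}$ the restriction $u|_V$ is within $O(1/(\ell m))$ of a genuine $(m+1)$-cycle, so rank-one projections $q_k$ exist there with the right behaviour. The $p$-tower, however, breaks in two places. First, your claim that the $p_k$'s are ``permuted exactly and cyclically by $\sigma$'' is false. If $p_k=\sum_{j\equiv k\ (m),\,j\notin\{j_r\}}\psi_j\psi_j^*$, then $\sigma(p_k)=\sum_{j'\equiv k+1,\,j'\neq j_{r(k)}+1}\psi_{j'}\psi_{j'}^*$, whereas $p_{k+1}$ omits $j_{r(k+1)}$; since the $j_r$'s are spaced roughly $\ell m/(m+1)$ apart (not consecutive), $j_{r(k)}+1\neq j_{r(k+1)}$ and $\|\sigma(p_k)-p_{k+1}\|=1$.

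Second, and more fundamentally, the orthogonality step cannot be repaired by a ``small corrective rotation''. The eigenbasis $\{\eta_j\}$ and the Fourier-dual basis $\{\psi_k\}$ are mutually unbiased: $|\langle\psi_k,\eta_j\rangle|^2=1/(\ell m)$ for all $k,j$. Hence each $\psi_k$ has $\|P_V\psi_k\|^2=m/(\ell m)=1/\ell$, and summing over the $\ell-1$ indices in $p_k$ gives $\tr(P_Vp_k)=(\ell-1)/\ell\approx 1$. Since $\dim V=m+1$, the positive operator $P_Vp_kP_V$ has norm at least $\tr(P_Vp_kP_V)/\mathrm{rank}\geq 1/(m+1)$, which is independent of $\ell$. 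So $\|P_Vp_k\|\geq 1/\sqrt{m+1}$ no matter how large $\ell$ is, and no $O(1/\ell)$ perturbation can make $p_k$ orthogonal to the $q$-tower. Kishimoto's actual argument avoids this by working in a single coordinate system throughout (essentially matching the eigenvalue multiset of $u$ with that of a unitary admitting exact Rokhlin towers of heights $m$ and $m+1$), rather than mixing the spectral and shift pictures.
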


In \cite{Ki01}, Kishimoto proved a Rokhlin type theorem by creating a matrix algebra inside the UHF algebra in a way that the action on the matrix algebra simulates what happens in Lemma \ref{lem:kishimoto}. The crucial observation of Matui and Sato is that, although in a general $C^*$-algebra there are no genuine matrix units, the simulation can still be done using order zero maps.

\begin{thm} (Matui-Sato)
\label{thm:kishimototrick}
Let $A$ be a unital simple separable $C^*$-algebra with $T(A)\neq \emptyset$. and let $\alpha$ be an automorphism of $A$. Suppose $A$ has property (SI), and $\alpha$ has the approximate Rokhlin property (Definition \ref{defn:approxRokhlin}). Then for any $m\in \N$ there exist positive contractions 
$$
a_1,...,a_m,b_1,...,b_m,c_1,...,c_{m+1}
$$
in $F(A)$ such that
\begin{enumerate}
\item[(i)] $\alpha_\omega(a_k) = a_{k+1},\;\;\; \alpha_\omega(b_k) = b_{k+1}\mod m\;\;\;\;\;\; (1\leq k\leq m)$;
\item[(ii)] $\alpha_\omega(c_k) = c_{k+1}\mod (m+1)\;\;\;\;\;\;(1\leq k\leq m+1)$;
\item[(iii)] $a_ia_j = 0,\;\;\; b_ib_j = 0\;\;\;\;\;\; (1\leq i\neq j\leq m)$;
\item[(iv)] $c_ic_j = 0\;\;\;\;\;\;\; (1\leq i\neq j\leq m+1)$;
\item[(v)] $b_ic_j = 0\;\;\;\;\;\; (1\leq i\leq m,\;\; 1\leq j\leq m+1)$;
\item[(vi)] $\sum_{k=1}^m a_k + \sum_{k=1}^m b_k + \sum_{k=1}^{m+1}c_k = 1$.
\end{enumerate}
In particular, $\rdim(\alpha) \leq 1$.
\end{thm}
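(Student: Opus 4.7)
The plan is to adapt Kishimoto's matrix-algebra trick to the setting of $F(A)$, combining Proposition \ref{prop:mergebySI} with Lemma \ref{lem:kishimoto}. Given $m\in\N$, first I would pick $\ell\in\N$ large enough that Lemma \ref{lem:kishimoto} produces $\e$-equivariant Rokhlin projections in $M_{\ell m+1}$ for some sufficiently small $\e$. Then apply Proposition \ref{prop:mergebySI} with parameter $\ell m$ (rather than $m$) to obtain positive contractions $g_1,\dots,g_{\ell m},v\in F(A)$ with the listed properties. A quick check shows $v^*v=1-\sum_k g_k$ is $\alpha_\omega$-invariant, so $\{g_1,\dots,g_{\ell m},v^*v\}$ is an orthogonal partition of unity on which $\alpha_\omega$ acts with eigenvalues $1,1,\omega,\omega^2,\dots,\omega^{\ell m-1}$ (where $\omega=e^{2\pi i/\ell m}$), exactly matching the eigenvalue pattern of the unitary in Lemma \ref{lem:kishimoto}.

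Next I would define the first tower directly from $g_1,\dots,g_{\ell m}$: set $a_k:=\sum_{i=0}^{\ell-1}g_{k+im}$ for $k=1,\dots,m$ (indices mod $\ell m$). A direct calculation gives $a_ia_j=0$ for $i\ne j$, $\alpha_\omega(a_k)=a_{k+1\bmod m}$, and $\sum_k a_k=1-v^*v$. The remaining positive contractions $b_1,\dots,b_m$ and $c_1,\dots,c_{m+1}$ must therefore partition the ``defect'' $v^*v$ into two orthogonal, $\alpha_\omega$-cyclic towers of heights $m$ and $m+1$. To construct them I would simulate the $M_{\ell m+1}$-structure inside $F(A)$: the ``diagonal'' positions are $g_1,\dots,g_{\ell m},v^*v$, while the first-column off-diagonal positions are provided by $v,\alpha_\omega(v),\dots,\alpha_\omega^{\ell m-1}(v)$, each of which links $v^*v$ to the appropriate $g_{k+1}$ slot via $g_{k+1}\alpha_\omega^{k}(v)=\alpha_\omega^{k}(v)$ and $\alpha_\omega^{k}(v)^*\alpha_\omega^{k}(v)=v^*v$. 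The polynomial-in-$u$ expressions defining Kishimoto's $p_k,q_k$ from Lemma \ref{lem:kishimoto} can then be transcribed into $F(A)$ using these approximate matrix units, yielding $b_k,c_k$ that satisfy conditions (ii)--(v) and (vi) of the theorem up to small error.

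The main obstacle is that $v$ is only approximately a partial isometry, so the simulated matrix units $\alpha_\omega^{k}(v)$ satisfy the needed algebraic relations only up to tolerance, and Kishimoto's formulas therefore produce $a_k,b_k,c_k$ satisfying (i)--(vi) only approximately. The standard remedy, used repeatedly in this paper, is to produce a sequence of approximate solutions with errors shrinking to zero as $\e\to 0$, and then invoke Kirchberg's $\e$-test (Lemma \ref{lem:epsilontest}) inside $F(A)=A_\omega\cap A'$ to extract an exact solution. Once $a_k,b_k,c_k$ are constructed exactly, the bound $\rdim(\alpha)\le 1$ follows from the definition by taking colour $0$ to be the single tower $\{a_k\}$ of height $m$ (with its height-$(m+1)$ companion taken to be zero) and colour $1$ to be $\{b_k\}$ of height $m$ together with $\{c_k\}$ of height $m+1$; Remark \ref{rem:towerheight} allows $m$ to be arbitrary rather than matching the $p$ in the definition.
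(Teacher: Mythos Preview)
Your overall strategy---Proposition \ref{prop:mergebySI} plus Lemma \ref{lem:kishimoto} plus a matrix-algebra simulation in $F(A)$, finished off with the $\e$-test---is exactly the paper's. But the bookkeeping in your second paragraph is off, and it is not a cosmetic slip.

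You take the ``diagonal'' of the simulated copy of $M_{\ell m+1}$ to be $g_1,\dots,g_{\ell m},v^*v$. The only off-diagonal elements you have, however, are the $\alpha_\omega^k(v)$, and these satisfy $\alpha_\omega^k(v)\alpha_\omega^k(v)^*=\alpha_\omega^k(vv^*)=:h_{k+1}$, which is much smaller than $g_{k+1}$ (recall $v^*v\in J_A$, so $vv^*$ is tracially null). Thus the matrix units you can actually build have diagonal $v^*v,h_1,\dots,h_{\ell m}$, not $v^*v,g_1,\dots,g_{\ell m}$. Concretely, set $x_1=(v^*v)^{1/2}$ and $x_i=\alpha_\omega^{i-2}(v^*)$ for $2\le i\le \ell m+1$; then $x_ix_i^*=v^*v=x_1^*x_1$ and $(x_j^*x_j)(x_i^*x_i)=0$ for $i\ne j$, so by \cite[Proposition 2.4]{RoWi01} the assignment $\psi(e_{ij})=x_i^*x_j$ is an \emph{exact} c.p.c.\ order zero map $M_{\ell m+1}\to F(A)$ with $\psi(1)=v^*v+\sum_k h_k$. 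Defining $b_k:=\psi(p_k)$ and $c_k:=\psi(q_k)$ gives $\sum_k b_k+\sum_k c_k=\psi(1)$, not $v^*v$. With your choice $a_k=\sum_i g_{k+im}$ the total in (vi) therefore overshoots by $\sum_k h_k$; the correct definition is $a_k:=\sum_i\bigl(g_{k+im}-h_{k+im}\bigr)$.

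A second, smaller point: the obstacle you name---``$v$ is only approximately a partial isometry''---is not the real one. The order-zero framework absorbs this completely: $\psi$ is an exact order zero map, the intertwining $\psi\circ\sigma=\alpha_\omega\circ\psi$ holds exactly (use $\alpha_\omega^{\ell m}(v)=v$ and $\alpha_\omega(v^*v)=v^*v$), and orthogonality (iii)--(v) and the sum (vi) hold on the nose. The only $\e$ in the entire argument comes from Lemma \ref{lem:kishimoto} itself, and it is only conditions (i)--(ii) that need the $\e$-test at the end.
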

\begin{proof}
Fix $m\in \N$ and $\e > 0$. Since we work in $F(A)$, it suffices to produce elements $a_k$, $b_k$ and $c_k$ satisfying (i)-(vi) up to $\e$. Let $\ell$ be the positive integer as in Lemma \ref{lem:kishimoto} that works for the $\e$ we fixed. By Proposition \ref{prop:mergebySI}, we can find elements $g_1,...,g_{\ell m}$ and $v$ in $F(A)$ such that
\begin{enumerate}
\item[(1)] $g_1,...,g_{\ell m}$ are positive contractions;
\item[(2)] $g_ig_j = 0$ whenever $1\leq i\neq j\leq \ell m$;
\item[(3)] $\alpha_\omega(g_k) = g_{k+1}\mod (\ell m)\;\;\;\;\;\;\;\; (1\leq k\leq \ell m)$;
\item[(4)] $g_1v = v$;
\item[(5)] $\sum_{k=1}^{\ell m} g_k + v^*v = 1$;
\item[(6)] $\alpha_\omega^{\ell m}(v) = v$.
\end{enumerate}
Define 
$$
x_1 := (v^*v)^{1/2},\;\;\;\;\;\; x_i = \alpha_\omega^{i-2}(v^*)\;\;\;\;\;\;\; (2\leq i\leq \ell m + 1).
$$
Then
$$
x_ix_i^* = \alpha_\omega^{i-2}(v^*v) = v^*v = x_1^*x_1\;\;\;\;\;\;\;\;\; (2\leq i\leq \ell m +1),
$$
and
\begin{align*}
(x_j^*x_j)(x_i^*x_i) = 0\;\;\;\;\;\;\;\; (1\leq i\neq j\leq \ell m + 1)
\end{align*}
(this follows from the facts that $(vv^*)(v^*v) = 0$, $g_1v = v$, and $g_ig_j = 0$ whenever $i\neq j$). Since $x_1\geq 0$ and $\|x_i\|\leq 1$ for each $1\leq i\leq \ell m+1$, by \cite[Proposition 2.4]{RoWi01} the map
$$
\psi:M_{\ell m+1}\to F(A),\;\;\;\; \psi(e_{ij}) = x_i^*x_j,
$$
where $(e_{ij})_{1\leq i,j\leq \ell m +1}$ are the standard matrix units, is a c.p.c order zero map.

Define an automorphism $\sigma\in \Aut(M_{\ell m+1})$ by $\sigma := \Ad u$, where
$$
u = \begin{pmatrix}
1 \\
 & 0 & 0 & \cdots & & 1 \\
 & 1 \\
 & & 1 \\
 & & & \ddots \\
 & & & & 1 & 0
\end{pmatrix}
$$
with respect to the standard matrix units. Notice that the eigenvalues of $u$ are $$
1, \omega^0, \omega^1,...,\omega^{\ell m-1},
$$
where $\omega = e^{2\pi \sqrt{-1}/\ell m}$. One checks that $\psi(\sigma(e_{ij})) = \alpha_\omega(\psi(e_{ij}))$ for all $i,j$, and hence $\psi\circ \sigma = \alpha_\omega\circ \psi$. By the choice of $\ell$, there exist projections
$$
p_1,...,p_m,q_1,...,q_{m+1}
$$
in $M_{\ell m + 1}$ such that
\begin{align*}
&\| \sigma(p_k) - p_{k+1} \| < \e \mod m\;\;\;\;\;\; (1\leq k\leq m);\\
&\| \sigma(q_k) - q_{k+1} \| < \e \mod (m+1)\;\;\;\;\;\; (1\leq k\leq m+1);\\
&\sum_{k=1}^m p_k + \sum_{k=1}^{m+1} q_k = 1.
\end{align*}
Let $h_j := \alpha^{j-1}(vv^*)$ for $1\leq j\leq \ell m$ and note that $h_j\leq g_j$ for each $j$. Define
\begin{align*}
b_k &:= \psi(p_k)\;\;\;\;\;\; (1\leq k\leq m),\\
c_k &:= \psi(q_k)\;\;\;\;\;\; (1\leq k\leq m+1),\\
a_k &:= \sum_{i=0}^{\ell-1} (g_{k+im} - h_{k+im})\;\;\;\;\;\; (1\leq k\leq m).
\end{align*}
We claim that these elements satisfy (i)-(vi) up to $\e$. It is clear that $\alpha_\omega(a_k) = a_{k+1}$ for $1\leq k\leq m$ and $a_ia_j = 0$ for $1\leq i\neq j\leq m$. The mutual orthogonality among $b_k$'s and $c_k$'s comes from the fact that $\psi$ is an order zero map. Also we have
\begin{align*}
\| \alpha_\omega(b_k) - b_{k+1} \| &= \| \alpha_\omega(\psi(p_k)) - \psi(p_{k+1}) \| \\
&= \| \psi(\sigma(p_k)) - \psi(p_{k+1}) \| \\
&\leq \|\sigma(p_k) - p_{k+1} \| \\
&< \e,
\end{align*}
and similarly $\| \alpha_\omega(c_k) - c_{k+1} \| < \e$. Finally,
\begin{align*}
\sum_{k=1}^m a_k &+ \sum_{k=1}^m b_k + \sum_{k=1}^{m+1} c_k \\
&= \sum_{k=1}^m\sum_{i=0}^{\ell -1} (g_{k+im}-h_{k+im}) + \sum_{k=1}^m \psi(p_k) + \sum_{k=1}^{m+1} \psi(g_k) \\
&= \sum_{k=1}^{\ell m} (g_k - h_k) + \psi(1) \\
&= \sum_{k=1}^{\ell m}g_k - \sum_{k=1}^{\ell m} \alpha^{k-1}(vv^*) + \sum_{k= 1}^{\ell m + 1} x_k^*x_k.
\end{align*}
Substituting the definitions of the $x_k$'s into the last expression and making cancellations, we see that the last expression is equal to $\sum_{k=1}^{\ell m}g_k + v^*v$, which is equal to 1 by construction.
\end{proof}

We have effectively proved Theorem \ref{thm:main}. The proof below summarizes the steps scattered in the paper.

\begin{proof} (of Theorem \ref{thm:main})
Write $(\hm, K, E)$ for the continuous $W^*$-bundle $(\overline{A}^u, \partial T(A), E)$ arising from the uniform 2-norm completion. Since $T(A) = T(A)^\alpha$, $\alpha$ induces an automorphism $(\tilde{\alpha}, \id)$ of $(\hm, K, E)$. Under the current assumptions each fiber $\pi_\lambda(\hm)$ is isomorphic to the hyperfinite II$_1$ factor $\mathcal{R}$, so by Proposition \ref{prop:fixpointalgebra} for each $p\in \N$ there is a unital embedding $\sigma: M_p\to (\hm^\omega\cap \hm')^{\tilde{\alpha}_\omega}$. Combining Theorem \ref{thm:bundleRokhlin} and Proposition \ref{prop:approxRokhlin} we see that $\alpha$ has the approximate Rokhlin property (Definition \ref{defn:approxRokhlin}). Since $A$ has property (SI), we can apply Theorem \ref{thm:kishimototrick}, which is exactly what we need.
\end{proof}

\begin{proof} (of Corollary \ref{cor:main}) It remains to show that under the assumption that $\partial T(A)$ is finite, the Rokhlin dimension of a strongly outer $\Z$-action is no greater than one. As in the proof of Theorem \ref{thm:main}, it suffices to establish the conclusion of Theorem \ref{thm:bundleRokhlin}. We shall follow the notations used in the statement and the proof of Theorem \ref{thm:bundleRokhlin}. In fact, it suffices to obtain the following slightly weaker conclusion: for every $p\in \N$, there exists a positive integer $n\geq p$ and projections $p_1,...,p_n$ in $\hm^\omega\cap \hm'$ such that
\begin{enumerate}
\item[(i)] $\beta_\omega(p_k) = p_{k+1}\mod n\;\;\;\; (1\leq k\leq n)$;
\item[(ii)] $\sum_{k=1}^n p_k = 1$;
\item[(iii)] $\| 1-p_1 \|_{2,u}^{(\omega)} < 1$.
\end{enumerate}
Once this is established, a similarly weaker version of Theorem \ref{thm:kishimototrick} would give us Rokhlin dimension no greater than one, by Remark \ref{rem:towerheight}.

Since $K$ is finite, the continuous $W^*$-bundle $(\hm,K,E) := (\overline{A}^u, \partial T(A), E)$ is nothing but a finite direct sum of the hyperfinite II$_1$ factor $\hr$. Therefore the conclusion above follows from the proof of \cite[Lemma 3.1]{Ki01} and \cite[Theorem 3.4]{MaSa02}. For reader's convenience we include a proof here.

As in the proof of Theorem \ref{thm:bundleRokhlin}, we will find positive contractions $r_1,...,r_n$ in $\hm$ satisfying conditions (a)-(e). Let $s = \mathrm{card}(K)$. First assume that the homeomorphism $\sigma:K\to K$ acts as a cycle (in the sense of permutation theory). Choose a positive integer $n\geq p$ such that $n \equiv 1\mod s$ and fix $\lambda\in K$. Since each power of the automorphism $\beta_\lambda^s \in \Aut(\pi_\lambda(\hm))$ is outer, we can apply Connes' non-commutative Rokhlin theorem \cite{Con01} and obtain mutually orthogonal projections $q_1,...,q_n$ in $\pi_\lambda(\hm)$ such that
\begin{enumerate}
\item[(a)'] $\| \beta_\lambda^s(q_k) - q_{k+1}\|_{2,\lambda} < \e'\mod n\;\;\;\;\;\;(1\leq k\leq n)$;
\item[(c)'] $\| [q_k, \pi_\lambda(x) ] \|_{2,\lambda} < \e'\;\;\;\;\;\; (1\leq k\leq n, x\in \Omega)$
\item[(d)'] $\sum_{k=1}^nq_k = 1$
\end{enumerate}
for any prescribed $\e' > 0$. Since $K$ is finite, for each $k$ there is a positive contraction $r_k'$ in $\hm$ such that $\pi_\lambda(r_k') = q_k$ and $\pi_\mu(r_k') = 0$ whenever $\mu\neq \lambda$. Define
$$
r_1 := \sum_{j=0}^{s-1}\beta^{nj}(r_1')\;\;\;\mathrm{and}\;\;\; r_k = \beta^{k-1}(r_1)\;\;\;\;\;\; (2\leq k\leq n).
$$
One checks that $r_1,...,r_n$ are positive contractions in $\hm$ satisfying conditions (a)-(d). Condition (e) follows from a calculation/approximation: for every $\mu\in K$ we have
\begin{align*}
n\tau_\mu(r_1) &= n\sum_{j=0}^{s-1}\tau_\mu(\beta^{nj}(r_1')) \\
&= n\sum_{j=0}^{s-1}\tau_\mu(\beta^j(r_1'))\;\;\;\;\;\; (n \equiv 1 \mod s) \\
&= \sum_{j=0}^{ns-1}\tau_\mu(\beta^j(r_1'))\;\;\;\;\;\; (n\equiv 1\mod s) \\
&= \tau_\mu\left( \sum_{k=1}^n r_k \right) \approx 1_\hm\;\;\;\;\;\; (\mathrm{condition (d)}).
\end{align*}
In the general case, the homeomorphism $\sigma$ can be decomposed into disjoint cycles. Let $\ell$ be a positive integer such that $\sigma^\ell = \id$. Now the proof is finished by choosing $n\geq p$ such that $n\equiv 1$ (mod $\ell$) and repeating the same construction as in the previous paragraph with $s$ replaced by $\ell$.
\end{proof}

\begin{rem}
In the above proof, using a more involved argument one can actually obtain the conclusion of Theorem \ref{thm:bundleRokhlin} for any $p\in \N$, not just some sufficiently large $n\geq p$. For details, we refer the reader to the proof of \cite[Theorem 3.6]{MaSa03}.
\end{rem}


\end{document}